\newcolumntype{d}[1]{D{.}{.}{#1}}
\newcommand\mc[1]{\multicolumn{1}{c}{#1}}
\pgfplotsset{compat=1.18}
\pgfplotsset{
    legend image with text/.style={
        legend image code/.code={%
            \node[anchor=center] at (0.3cm,0cm) {#1};
        }
    },
}
\newenvironment{customlegend}[1][]{%
    \begingroup
    \csname pgfplots@init@cleared@structures\endcsname
    \pgfplotsset{#1}%
}{%
    \csname pgfplots@createlegend\endcsname
    \endgroup
}%
\def\addlegendimage{\csname pgfplots@addlegendimage\endcsname}
\newcommand{\addlegendimageintext}[1]{%
    \tikz {
        \begin{customlegend}[
            legend entries={\empty},
            legend style={
                draw=none,
                inner sep=0pt,
                column sep=0pt,
                nodes={inner sep=0pt}}]
        \addlegendimage{#1}
        \end{customlegend}
    }%
}
\algnewcommand{\LineComment}[1]{\State \(\triangleright\) #1}
\renewcommand{\Return}[1]{\State{\textbf{return} #1}}
\renewcommand*\env@matrix[1][*\c@MaxMatrixCols c]{%
	\hskip -\arraycolsep
	\let\@ifnextchar\new@ifnextchar
	\array{#1}}
\definecolor{color0}{rgb}{0.65,0,0.15}
\definecolor{color1}{rgb}{0.84,0.19,0.15}
\definecolor{color2}{rgb}{0.96,0.43,0.26}
\definecolor{color3}{rgb}{0.99,0.68,0.38}
\definecolor{color4}{rgb}{1,0.88,0.56}
\definecolor{color5}{rgb}{0.67,0.85,0.91}
\definecolor{color6}{rgb}{0.27,0.46,0.71}
\renewcommand{\d}[1]{\,\text{d}#1}
\newcommand{\argmax}{\operatorname{argmax}}
\newcommand{\argmin}{\operatorname{argmin}}
\newcommand{\X}{X}
\newcommand{\U}{U}
\newcommand{\setR}{\mathbb{R}}
\newcommand{\setN}{\mathbb{N}}
\newcommand{\norm}[1]{\left\lVert#1\right\rVert}
\newcommand{\Span}[1]{\operatorname{span}\left(#1\right)}
\newcommand{\Gramian}{\Lambda_\mu^R}
\newcommand{\params}{\mathcal{P}}
\newcommand{\paramstrain}{\mathcal{P}_\mathrm{train}}
\newcommand{\redSpaceTransformed}{Y_\mu}
\newcommand{\RBROM}{G-ROM}
\newcommand{\DNNROM}{DNN-ROM}
\newcommand{\VKOGAROM}{VKOGA-ROM}
\newcommand{\GPRROM}{GPR-ROM}
\newtheorem{theorem}{Theorem}
\newtheorem{lemma}{Lemma}
\theoremstyle{remark}
\newtheorem{remark}{Remark}
\newtheorem{assumption}{Assumption}
\crefname{assumption}{Assumption}{Assumptions}
\begin{document}
    \title{Be greedy and learn: efficient and certified algorithms for parametrized optimal control problems}
    \author[1]{Hendrik Kleikamp\,\orcidlink{0000-0003-1264-5941}\,\thanks{Corresponding author, {\tt hendrik.kleikamp@uni-muenster.de}}}
    \author[2]{Martin Lazar\,\orcidlink{0000-0002-4034-5770}\,}
    \author[3]{Cesare Molinari\,\orcidlink{0000-0003-0864-5682}\,}
    \affil[1]{Institute for Analysis and Numerics, Mathematics Münster, University of Münster, Einsteinstrasse 62, 48149 Münster, Germany, {\tt hendrik.kleikamp@uni-muenster.de}}
    \affil[2]{Department of Electrical Engineering and Computing, University of Dubrovnik, \'Cira~Cari\'ca~4, 20 000 Dub\-rov\-nik, Croatia, {\tt mlazar@unidu.hr} }
    \affil[3]{Dipartimento di Matematica, University of Genova, Via Dodecaneso 35, 16146, Genoa, Italy, {\tt molinari@dima.unige.it}}
    \date{28.07.2023}

	\maketitle

    \begin{abstract}
        \noindent 
        We consider parametrized linear-quadratic optimal control problems and provide their online-efficient solutions by combining greedy reduced basis methods and machine learning algorithms. To this end, we first extend the greedy control algorithm, which builds a reduced basis for the manifold of optimal final time adjoint states, to the setting where the objective functional consists of a penalty term measuring the deviation from a desired state and a term describing the control energy. Afterwards, we apply machine learning surrogates to accelerate the online evaluation of the reduced model. The error estimates proven for the greedy procedure are further transferred to the machine learning models and thus allow for efficient a posteriori error certification. We discuss the computational costs of all considered methods in detail and show by means of two numerical examples the tremendous potential of the proposed methodology.
    \end{abstract}
    \noindent
    \textbf{Keywords:} Parametrized optimal control, Greedy algorithm, Machine learning, Deep neural networks, Kernel methods, Error estimation
    \newline
    \newline
    \textbf{MSC Classification:} 49N10, 68T07, 46E22, 62J02

    \section{Introduction}
In this work, we are concerned with a family of parameter-dependent optimal control problems where the state equations are given as a linear, time-invariant, infinite-dimensional system. The objective functional is a quadratic functional that measures the deviation from a desired final state and an energy of the control. For a given parameter the problem consists of finding a control that minimizes the objective functional under the constraint of the state equation.
\par
In this setting, one is typically interested in solving the optimal control problem several times for different parameters -- either in a multi-query or real-time context. In either case, solving the exact optimal control problem for lots of parameters is prohibitively costly, in particular if the dimension of the state space is large. We therefore aim to develop a reduced order model that is built during a (potentially costly) offline phase by computing the exact optimal control only for few, carefully selected parameters. Afterwards, the reduced model can be explored efficiently during the online phase for previously unconsidered parameters.
\par
The reduced basis algorithms were successfully developed and applied to parameter-dependent control problems during the last decade (e.g.~\cite{ballarin2022spacetime,dede2012reduced,lazar2016greedy}), with the offline phase mainly exploring proper orthogonal decomposition (POD) or a greedy sampling procedure or their combination (POD-greedy) in case of time-dependent reduced basis methods~\cite{hesthaven2016certified}. However, the cost of the corresponding online phase might still appear high, as the computation of the projection (of the solution to a reduced basis) relies on the full-order systems. An alternative approach would employ novel numerical tools that can handle high-dimensional problems and face the curse of dimensionality.
\par
To this end we combine model reduction and machine learning techniques. Although recently many papers appeared in which these two methods are combined, see for instance~\cite{daniel2020model}, the results are rather scarce when it comes to control systems~\cite{lazar2022control}. In~\cite{dalsanto2020data,hesthaven2018nonintrusive}, non-intrusive reduced basis methods that rely on neural networks were successfully applied for computing solutions to parametric PDEs. We want to develop a similar approach with the aim of efficiently treating parameter-dependent control problems. The idea is to train neural networks (or some other machine learning tool) to accurately predict the coefficients of solutions in a reduced basis, with a computational effort that is independent of the dimension of the full-order model.  The training is performed in the offline phase with a negligible cost, since the required data are generated by the greedy algorithm itself anyway, irrespective of a possible application of machine learning tools. As we shall see, such an approach will enable a significant computational speedup of the online phase, not only when compared to  computation of the exact solutions from scratch, but also in comparison with standard reduced basis approaches, which fully justifies its development.
\par
The paper is organized as follows: We first introduce the problem setting for parametrized linear-quadratic optimal control problems and derive the corresponding optimality system in~\Cref{sec:parametrized-optimal-control-problems}. Afterwards, in~\Cref{sec:reduced-order-modeling-greedy}, we describe a greedy algorithm to construct a reduced order model for parametrized optimal control problems. Additionally, we prove that the presented greedy algorithm is a weak greedy algorithm and derive a priori and a posteriori error estimates. In~\Cref{sec:reduced-order-machine-learning} we introduce a machine learning based approach to accelerate the online computation of approximate optimal controls. We further describe three different machine learning methods that are applied in our numerical experiments: deep neural networks, kernel methods and Gaussian process regression. The subsequent~\Cref{subsec:computational-costs} discusses in detail the required computational costs for the offline and the online stages of all described algorithms. In~\Cref{sec:numerical-experiments} we show the potential of our proposed algorithms on two examples coming from the field of optimal control of partial differential equations (PDEs). The paper concludes in~\Cref{sec:conclusion-outlook} with a discussion of the theoretical and practical results and an outlook to future research related to this contribution.

	\section{Parametrized Optimal Control Problems}\label{sec:parametrized-optimal-control-problems}
We are interested in parametrized linear optimal control problems where the objective functional consists of a term that penalizes the deviation from a target state at final time and a term measuring the energy of the applied control. The state equation serves as a constraint and is given in form of a linear time-invariant system with linear control. The parameter enters the operators governing the state equation as well as the initial conditions and the target state. In this section, we first give a detailed definition of the problem we consider. Afterwards, the associated optimality system is introduced and a linear system of equations for the optimal final time adjoint state is derived.

\subsection{Notation and Problem Definition}
Let~$\X$ and~$\U$ be real Hilbert spaces with scalar products~$\langle \cdot, \cdot \rangle_{\X}$ and~$\langle \cdot, \cdot \rangle_{\U}$ as well as associated norms~$\norm{\cdot}_{\X}$ and~$\norm{\cdot}_{\U}$, respectively. We omit the index in the scalar product and in the corresponding norm when the spaces are clear from the context. Let~$\mathcal{L}(\U,\X)$ denote, for instance, the set of linear and bounded operators from~$\U$ to~$\X$. In the following, we will refer to~$\X$ as the state and to~$\U$ as the control space. We consider parametrized linear control systems of the form
\begin{equation}\label{equ:parametrized-control-system}
    \begin{aligned}
          \dot{x}_\mu(t) &= A_\mu x_\mu(t) + B_\mu u_\mu(t),\qquad t\in[0,T], \\
          x_\mu(0) &= x_\mu^0,
    \end{aligned}
\end{equation}
where~$\mu\in\params$ denotes the parameter from a compact subset~$\params$ of some Banach space (that can also be infinite-dimensional), $A_\mu\in\mathcal{L}(\X,\X)$ and~$B_\mu\in\mathcal{L}(\U,\X)$ are parameter-dependent operators, $x_\mu^0\in \X$ is the parameter-dependent initial state, $x_\mu\colon[0,T]\to \X$ denotes the state trajectory, $u_\mu\colon[0,T]\to\U$ is the control, and~$T>0$ is the final time. Below, we consider the problem of steering the control system in~\eqref{equ:parametrized-control-system} close to a given (potentially parameter-dependent) target state~$x_\mu^T\in\X$. The following (natural) assumption is supposed to hold throughout the rest of the paper:
\begin{assumption}[Lipschitz continuity of parameter to system components maps]\label{as:continuity-parameter-to-system-matrices}
    The subset~$\params$ is compact and the mappings~$\params\ni\mu\mapsto A_\mu\in\mathcal{L}(\X,\X)$ and~$\params\ni\mu\mapsto B_\mu\in\mathcal{L}(\U,\X)$ from parameter to system matrices are Lipschitz continuous. In addition, we assume that the mappings~$\params\ni\mu\mapsto x_\mu^0\in\X$ and~$\params\ni\mu\mapsto x_\mu^T\in\X$ from parameter to initial and target state are Lipschitz continuous.
\end{assumption}
Furthermore, we introduce certain function spaces for the controls and states: As mentioned above, we denote by~$\U$ the space of admissible controls at fixed times. The associated space of time-dependent controls is given as~$G\coloneqq L^2([0,T];\U)$. Similarly, we have defined the state space~$\X$. We also define the space of time-dependent states~$H\coloneqq L^2([0,T];\X)\cap C^1([0,T];\X)$.
\par
Given a parameter~$\mu\in\params$, we are interested in finding a control~$u_\mu^*\in G$ that minimizes the functional~$\mathcal{J}_\mu\colon G\to\setR$, defined for a control~$u\in G$ as
\begin{align}\label{equ:cost-functional}
        \mathcal{J}_\mu(u) \coloneqq \frac{1}{2}\left[\langle x_\mu(T)-x_\mu^T,M\left(x_\mu(T)-x_\mu^T\right)\rangle + \int\limits_0^T \langle u(t),Ru(t)\rangle \d{t}\right],
\end{align}
where~$M\in\mathcal{L}(\X,\X)$ and~$R\in\mathcal{L}(\U,\U)$ satisfy the following assumptions.
\begin{assumption}[Properties of the weighting operators]\label{as:weightingops}
The operator $M\in\mathcal{L}(\X,\X)$ is self-adjoint and positive-semidefinite, while~$R\in\mathcal{L}(\U,\U)$ is self-adjoint and strictly positive-definite, meaning that~$R\geq\alpha I$ for some~$\alpha>0$.
\end{assumption}
The strict positive-definiteness of~$R$ implies in particular that~$R$ is invertible and that~$\mathcal{J}_\mu$ is strongly convex with respect to the control~$u$ and thus possesses a unique minimizer. In addition, $x_\mu\in H$ is the solution of~\cref{equ:parametrized-control-system} associated to the control~$u\in G$, and~$x_\mu^T\in \X$ is the target state. For notational simplicity, we omit stating explicitly the dependence of the state trajectory~$x_\mu$ on the control~$u$. The first term in the functional~$\mathcal{J}$ penalizes a deviation of the state at the final time~$T$ from the target state~$x_\mu^T$, where the deviation in different components can be weighted by the operator~$M$. The second term measures the energy of the control with respect to the weighting operator~$R$. We can summarize our parametrized, linear-quadratic optimal control problem as follows:
\begin{align}\label{equ:parametrized-optimal-control-problem}
    \min\limits_{u\in \U} \mathcal{J}_\mu(u),\qquad\text{subject to } \dot{x}_\mu(t)=A_\mu x_\mu(t)+B_\mu u(t) \text{ for } t\in[0,T],\quad x_\mu(0)=x_\mu^0.
\end{align}
Under the given integrability assumptions on the control, the state equation has a unique solution by Ca\-ra\-théo\-do\-ry's existence theorem, see for instance Chapter~I.4. in~\cite{hale1980ordinary}. As already stated above, the objective functional~$\mathcal{J}_\mu$ is strongly convex with respect to the control~$u$. Moreover, it is quadratic and so lower-semicontinuous. Hence, the optimal control problem in~\cref{equ:parametrized-optimal-control-problem} is well-posed and has a unique solution, see for instance Corollary~2.20 in~\cite{peypouquet2015convex}.

\begin{remark}[Parameter-dependent weighting matrices]
    It is also possible to consider parameter-dependent weighting operators~$M_\mu\in\mathcal{L}(\X,\X)$ and~$R_\mu\in\mathcal{L}(\U,\U)$ that change for different parameters~$\mu\in\params$. Under the assumption that the maps~$\params\ni\mu\mapsto M_\mu\in\mathcal{L}(\X,\X)$ and~$\params\ni\mu\mapsto R_\mu\in\mathcal{L}(\U,\U)$ are Lipschitz continuous, the theory and the algorithms developed below would not change, one solely has to replace~$M$ by~$M_\mu$ and~$R$ by~$R_\mu$, respectively. For notational simplicity and since we do not consider this case in our numerical experiments, we stick to the setting of parameter-independent operators~$M$ and~$R$. In the case of parameter-independent self-adjoint and positive-definite weighting operators, the weighting operators can also be interpreted as the introduction of an additional scalar product on the spaces~$\X$ and~$\U$, respectively. These scalar products are equivalent to the standard Euclidean scalar product if the spaces~$\X$ and~$\U$ are finite-dimensional.
\end{remark}
In the subsequent section, we present and discuss the optimality system associated to the optimal control problem~\eqref{equ:parametrized-optimal-control-problem}. As we will see, solving this optimality system can become costly already for a single parameter in case of moderate to large scale systems.

\subsection{Linear-quadratic Optimal Control and the Optimality System}\label{subsec:optimality-system}
By means of methods from the calculus of variations, one can derive the following theorem that characterizes the optimal control by considering the adjoint system.
\begin{theorem}[Optimality system for the optimal control problem]\label{thm:optimality-system}
    Let~$\mu\in\params$ be a parameter, $u_\mu^*\in G$ an optimal control, i.e.~a solution of \eqref{equ:parametrized-optimal-control-problem}, and denote by~$x_\mu^*\in H$ the associated state trajectory, i.e.~the solution of~\eqref{equ:parametrized-control-system} for the control~$u_\mu^*$. Then there exists an adjoint solution~$\varphi_\mu^*\in H$, such that the linear boundary value problem
    \begin{subequations}\label{equ:optimality-system-main}
        \begin{equation}\label{equ:optimality-system-odes}
            \begin{aligned}
                \dot{x}_\mu(t) &= A_\mu x_\mu(t)+B_\mu u_\mu(t), \\
                -\dot{\varphi}_\mu(t) &= A_\mu^* \varphi_\mu(t), \\
                u_\mu(t) &= -R^{-1}B_\mu^* \varphi_\mu(t),
            \end{aligned}
        \end{equation}
        for~$t\in[0,T]$ with initial respectively terminal conditions
        \begin{align}\label{equ:optimality-system-boundary-conditions}
            x_\mu(0) = x_\mu^0,\qquad \varphi_\mu(T)=M\left(x_\mu(T)-x_\mu^T\right),
        \end{align}
    \end{subequations}
    is solved by~$x_\mu=x_\mu^*$, $\varphi_\mu=\varphi_\mu^*$ and~$u_\mu=u_\mu^*$. In~\cref{equ:optimality-system-odes}, $A_\mu^*\in\mathcal{L}(\X,\X)$ and~$B_\mu^*\in\mathcal{L}(\X,\U)$ denote the adjoint operators of~$A_\mu$ and~$B_\mu$, respectively.
\end{theorem}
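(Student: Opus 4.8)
The plan is to eliminate the state from the objective functional, use strong convexity to obtain a first-order condition that is both necessary and sufficient for the optimal control~$u_\mu^*$, and then introduce the adjoint state precisely so that this condition turns into the claimed feedback law. First I would make the control-to-state map explicit: since~$A_\mu\in\mathcal{L}(\X,\X)$ is bounded, the operator exponential~$e^{tA_\mu}$ is well defined and the variation-of-constants formula gives~$x_\mu(t)=e^{tA_\mu}x_\mu^0+\int_0^t e^{(t-s)A_\mu}B_\mu u(s)\d{s}$, so that~$u\mapsto x_\mu$ is a continuous affine map~$G\to H$. Inserting this into~$\mathcal{J}_\mu$ yields a reduced functional~$\hat{\mathcal{J}}_\mu\colon G\to\setR$ which is quadratic, continuous, and -- by strict positive-definiteness of~$R$ from~\Cref{as:weightingops} -- strongly convex; hence~$u_\mu^*$ minimizes~$\hat{\mathcal{J}}_\mu$ if and only if its Fréchet derivative vanishes at~$u_\mu^*$. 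Writing~$\delta x\in H$ for the solution of~$\dot{\delta x}=A_\mu\delta x+B_\mu h$, $\delta x(0)=0$, associated with a direction~$h\in G$, a direct computation gives
\[
    \langle\hat{\mathcal{J}}_\mu'(u),h\rangle=\langle x_\mu(T)-x_\mu^T,M\,\delta x(T)\rangle+\int_0^T\langle Ru(t),h(t)\rangle\d{t}.
\]

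The key step is to rewrite the boundary term~$\langle x_\mu(T)-x_\mu^T,M\,\delta x(T)\rangle$ so that~$\delta x$ disappears. Given the optimal state~$x_\mu^*$, I would define~$\varphi_\mu^*\in H$ as the solution of the backward problem~$-\dot\varphi_\mu^*(t)=A_\mu^*\varphi_\mu^*(t)$ on~$[0,T]$ with terminal condition~$\varphi_\mu^*(T)=M(x_\mu^*(T)-x_\mu^T)$; being linear, time-invariant and of bounded coefficients, this problem is well posed and its solution lies in~$H$. Since~$M$ is self-adjoint, the boundary term equals~$\langle\varphi_\mu^*(T),\delta x(T)\rangle$. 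Differentiating~$t\mapsto\langle\varphi_\mu^*(t),\delta x(t)\rangle$ and substituting the two differential equations, the~$A_\mu$-terms cancel and one is left with~$\frac{d}{dt}\langle\varphi_\mu^*(t),\delta x(t)\rangle=\langle B_\mu^*\varphi_\mu^*(t),h(t)\rangle$. Integrating over~$[0,T]$ and using~$\delta x(0)=0$ gives~$\langle\varphi_\mu^*(T),\delta x(T)\rangle=\int_0^T\langle B_\mu^*\varphi_\mu^*(t),h(t)\rangle\d{t}$, hence, evaluating at~$u_\mu^*$,
\[
    0=\langle\hat{\mathcal{J}}_\mu'(u_\mu^*),h\rangle=\int_0^T\langle B_\mu^*\varphi_\mu^*(t)+Ru_\mu^*(t),h(t)\rangle\d{t}\qquad\text{for all }h\in G.
\]
By the fundamental lemma of the calculus of variations this forces~$Ru_\mu^*(t)=-B_\mu^*\varphi_\mu^*(t)$ for almost every~$t$, and since~$R$ is invertible we obtain~$u_\mu^*(t)=-R^{-1}B_\mu^*\varphi_\mu^*(t)$. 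Collecting the state equation for~$x_\mu^*$, the backward equation defining~$\varphi_\mu^*$, this feedback relation, and the two boundary conditions reproduces exactly the system~\eqref{equ:optimality-system-main}.

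The routine parts are the variation-of-constants representation and the computation of~$\hat{\mathcal{J}}_\mu'$; the point that deserves the most care is the integration-by-parts identity for~$\langle\varphi_\mu^*,\delta x\rangle$, where I need both factors regular enough to apply the product rule and the fundamental theorem of calculus -- this is exactly what the choice~$H=L^2([0,T];\X)\cap C^1([0,T];\X)$ and the well-posedness of the (autonomous, linear, bounded-coefficient) adjoint equation guarantee. A compact alternative that makes the structure transparent is to work with the Lagrangian~$\mathcal{L}(x,u,\varphi)=\mathcal{J}_\mu(u)+\int_0^T\langle\varphi(t),\dot x(t)-A_\mu x(t)-B_\mu u(t)\rangle\d{t}$ and read off the adjoint equation from stationarity in~$x$ (after one integration by parts in time), the feedback law from stationarity in~$u$, and the state equation from stationarity in~$\varphi$; the computations are the same, but the origin of each equation in~\eqref{equ:optimality-system-main} becomes immediate.
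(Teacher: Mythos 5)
Your argument is correct, and it is a variant of the same first-variation calculus the paper uses, but organized differently enough to be worth contrasting. The paper perturbs the control as $u^*+\varepsilon v$, introduces the Hamiltonian $\mathcal{H}(x,u,\varphi)=\tfrac12\langle u,Ru\rangle+\langle\varphi,A_\mu x+B_\mu u\rangle$ with an as-yet-unspecified adjoint $\varphi$, expands $\mathcal{J}(u)-\mathcal{J}(u^*)$ to first order in $\varepsilon$, integrates by parts, and only at the very end reads off all three conditions (terminal condition, adjoint ODE, feedback law) from the requirement that the sum of three terms vanish for every direction $v$. You instead commit to the adjoint from the outset: you \emph{define} $\varphi_\mu^*$ as the solution of the backward problem with terminal datum $M(x_\mu^*(T)-x_\mu^T)$ — which makes the existence claim of the theorem immediate and the adjoint ODE plus boundary condition true by construction — and then use the duality identity $\tfrac{\text{d}}{\text{d}t}\langle\varphi_\mu^*(t),\delta x(t)\rangle=\langle B_\mu^*\varphi_\mu^*(t),h(t)\rangle$ to reduce the stationarity condition to the single integral identity $\int_0^T\langle Ru_\mu^*+B_\mu^*\varphi_\mu^*,h\rangle\d{t}=0$ for all $h\in G$, from which only the feedback law remains to be extracted. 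Your route buys a cleaner logical structure (no dangling ``choose $\varphi$ so that the other terms cancel'' step, which the paper leaves implicit in its final ``this yields the claimed conditions''), an explicit well-posedness argument for the adjoint, and — via strong convexity of the reduced functional — the observation that the condition is also sufficient, which the theorem does not claim but which is true. The paper's Hamiltonian/Lagrangian presentation, which you sketch as your ``compact alternative,'' has the advantage of making visible where each of the three equations in \eqref{equ:optimality-system-main} comes from as a stationarity condition in $\varphi$, $x$, and $u$ respectively. Both computations are the same integration by parts in disguise, and both correctly use the self-adjointness of $M$ and $R$ and the condition $\delta x(0)=0$ (respectively $z(0)=0$).
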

\begin{proof}
    See~\Cref{app:proof-optimality-system}.
\end{proof}

The optimality system in~\cref{equ:optimality-system-main} shows that~$u_\mu^*$, $x_\mu^*$ and~$\varphi_\mu^*$ are already uniquely determined by the optimal final time adjoint~$\varphi_\mu^*(T)$. To be more precise, we can explicitly compute~$u_\mu^*(t)$, $x_\mu^*(t)$ and~$\varphi_\mu^*(t)$ from~$\varphi_\mu^*(T)$ for~$t\in[0,T]$ using the following equations, see also~\Cref{fig:visualization-equations}:
\begin{align}\label{fromdualtoprimal}
    \varphi_\mu^*(t) &= e^{A_\mu^*(T-t)}\varphi_\mu^*(T), \\
    u_\mu^*(t) &= -R^{-1}B_\mu^*\varphi_\mu^*(t), \label{control} \\
    x_\mu^*(t) &= e^{A_\mu t}x_\mu^0 - \int\limits_0^t e^{A_\mu(t-s)}B_\mu R^{-1}B_\mu^* e^{A_\mu^*(T-s)}\varphi_\mu^*(T)\d{s}.\label{state}
\end{align}

\begin{figure}[H]
    \centering
    \begin{tikzpicture}[node distance=1.75cm]
        \node[draw] (xmu0) {$x_\mu^0=x_\mu(0)$};
        \node[draw, right=5cm of xmu0] (xmut) {$x_\mu(t):t\in[0,T]$};
        \node[draw, right=of xmut, fill=color5] (xmuT) {$x_\mu(T)\approx x_\mu^T$};

        \node[draw, below=2.5cm of xmuT, fill=color3] (varphimuT) {$\varphi_\mu(T)$};
        \node[draw, left=5.75cm of varphimuT] (varphimut) {$\varphi_\mu(t):t\in[0,T]$};

        \node[draw, above=0.9cm of varphimut] (umut) {$u_\mu(t):t\in[0,T]$};

        \draw[->, >=latex] (xmu0) -- node[midway, above] {$\dot{x}_\mu(t)=A_\mu x_\mu(t)+B_\mu u_\mu(t)$} (xmut);
        \draw[->, >=latex] (xmut) -- (xmuT);
        \draw[->, >=latex] (varphimuT) -- node[midway, above] {$-\dot{\varphi}_\mu(t)=A_\mu^*\varphi_\mu(t)$} (varphimut);
        \draw[->, >=latex] (varphimut) -- node[midway, left] {$u_\mu(t)=-R^{-1}B_\mu^*\varphi_\mu(t)$} (umut);
        \draw (umut.north) -- (umut.north |- xmut.west);

        \node (lc) at ($(xmu0.west |- varphimut.south) + (-1, -0.5)$) {};
        \node[label={[label distance=-5pt]45:{$t$}}] (rc) at ($(xmuT.east |- varphimut.south) + (1, -0.5)$) {};
        \draw[->, >=latex] (lc) -- (rc);
        \node at ($(xmu0.south |- lc) + (0, -0.5)$) {$0$};
        \node at ($(varphimuT.south |- lc) + (0, -0.5)$) {$T$};
        \draw ($(xmu0.south |- lc.east)$) -- ($(xmu0.south |- lc.east) + (0, -0.2)$);
        \draw ($(varphimuT.south |- lc.east)$) -- ($(varphimuT.south |- lc.east) + (0, -0.2)$);

        \node[anchor=east] at ($(xmu0) + (-1.5, 0)$) {State:};
        \node[anchor=east] at ($(xmu0 |- umut) + (-1.5, 0)$) {Control:};
        \node[anchor=east] at ($(xmu0 |- varphimut) + (-1.5, 0)$) {Adjoint:};
    \end{tikzpicture}
    \caption{Schematic visualization of the computations required to approach the final time state~$x_\mu(T)$ (marked in blue) from a final time adjoint~$\varphi_\mu(T)$ (marked in orange).}
    \label{fig:visualization-equations}
\end{figure}
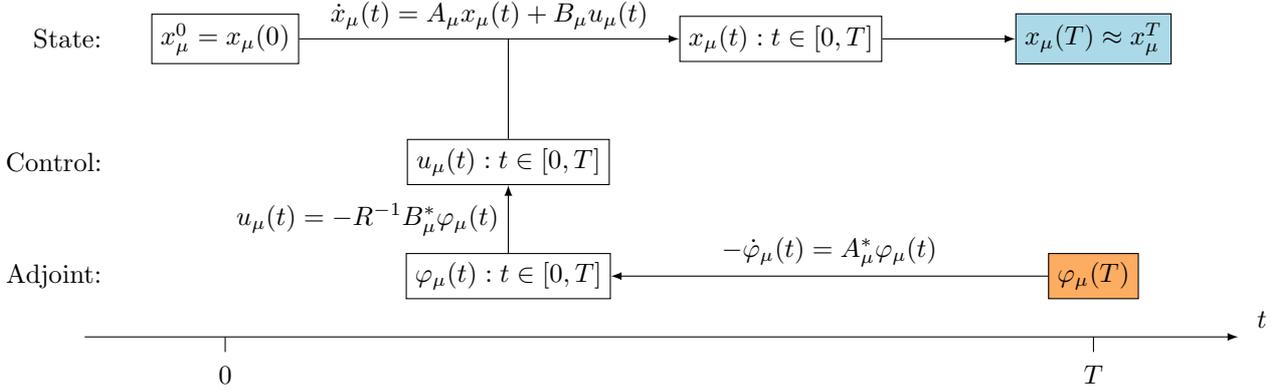

However, according to the boundary condition in~\cref{equ:optimality-system-boundary-conditions}, the optimal final time adjoint~$\varphi_\mu^*(T)$ is coupled to the optimal state~$x_\mu^*(T)$ at the final time~$T$, and hence, cannot be computed directly. To circumvent this issue, we compute~$x_\mu^*(T)$ in terms of~$\varphi_\mu^*(T)$ to obtain a linear equation for~$\varphi_\mu^*(T)$. It holds
\[
    x_\mu^*(T) = e^{A_\mu T}x_\mu^0 - \int\limits_0^T e^{A_\mu(T-s)}B_\mu R^{-1}B_\mu^* e^{A_\mu^*(T-s)}\d{s} \cdot \varphi_\mu^*(T) = e^{A_\mu T}x_\mu^0 - \Gramian \varphi_\mu^*(T),
\]
where the \emph{weighted controllability Gramian}~$\Gramian\in\mathcal{L}(\X,\X)$ is defined as
\begin{align}\label{equ:definition-gramian-matrix}
    \Gramian \coloneqq \int\limits_0^T e^{A_\mu(T-s)}B_\mu R^{-1}B_\mu^* e^{A_\mu^*(T-s)}\d{s}.
\end{align}
The Gramian~$\Gramian$ is bounded due to the boundedness of all involved operators and the finite time horizon we consider. According to~\Cref{thm:optimality-system}, we have that
\[
    \varphi_\mu^*(T) = M\left(x_\mu^*(T)-x_\mu^T\right),
\]
and it further holds
\begin{align}\label{equ:state-gramian-adjoint-relation}
    x_\mu^*(T) = e^{A_\mu T}x_\mu^0 - \Gramian \varphi_\mu^*(T)
\end{align}
as shown above. Combining these two equations, we obtain
\[
    \varphi_\mu^*(T) = M\left(e^{A_\mu T}x_\mu^0 - x_\mu^T - \Gramian \varphi_\mu^*(T)\right).
\]
Rearranging the previous equation gives the following Lemma that states the resulting equation the optimal final time adjoint~$\varphi_\mu^*(T)\in \X$ has to solve:
\begin{lemma}[Linear system for the optimal final time adjoint]\label{lem:linear-system}
    Let~$\varphi_\mu^*(T)$ denote the optimal adjoint state at time~$T$ that determines the solution of the optimality system~\eqref{equ:optimality-system-main}. Then it holds
    \begin{align}\label{equ:linear-system-for-optimal-final-time-adjoint}
        \left(I+M\Gramian\right)\varphi_\mu^*(T) = M\left(e^{A_\mu T}x_\mu^0 - x_\mu^T\right),
    \end{align}
    where~$I\in\mathcal{L}(\X,\X)$ denotes the identity.
\end{lemma}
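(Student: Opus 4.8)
The plan is to obtain \eqref{equ:linear-system-for-optimal-final-time-adjoint} directly from the optimality system of \Cref{thm:optimality-system} combined with the variation-of-constants representation of the state, exactly along the lines of the discussion preceding the statement; once the mild solution formula is in place, the argument is purely algebraic.

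First I would invoke \Cref{thm:optimality-system} to fix the optimal triple~$(x_\mu^*,\varphi_\mu^*,u_\mu^*)$ and record the two ingredients needed: the terminal condition~$\varphi_\mu^*(T)=M\left(x_\mu^*(T)-x_\mu^T\right)$, and the fact that, since~$-\dot{\varphi}_\mu^*=A_\mu^*\varphi_\mu^*$, the adjoint is given in closed form by~$\varphi_\mu^*(t)=e^{A_\mu^*(T-t)}\varphi_\mu^*(T)$, so that the feedback law reads~$u_\mu^*(t)=-R^{-1}B_\mu^* e^{A_\mu^*(T-t)}\varphi_\mu^*(T)$. Because~$A_\mu\in\mathcal{L}(\X,\X)$ is bounded, $t\mapsto e^{A_\mu t}$ is a uniformly continuous semigroup and the solution of~\eqref{equ:parametrized-control-system} at time~$T$ admits the classical representation~$x_\mu^*(T)=e^{A_\mu T}x_\mu^0+\int_0^T e^{A_\mu(T-s)}B_\mu u_\mu^*(s)\d{s}$.

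Next I would substitute the feedback expression for~$u_\mu^*$ into this integral. Since~$\varphi_\mu^*(T)\in\X$ is a fixed vector and the integrand depends on it only through the bounded operator~$e^{A_\mu(T-s)}B_\mu R^{-1}B_\mu^* e^{A_\mu^*(T-s)}$, one may pull~$\varphi_\mu^*(T)$ out of the (Bochner) integral and recognize the remaining operator-valued integral as the weighted controllability Gramian from~\eqref{equ:definition-gramian-matrix}, which yields~$x_\mu^*(T)=e^{A_\mu T}x_\mu^0-\Gramian\,\varphi_\mu^*(T)$; the interchange is justified because the integrand is continuous in~$s$ on the compact interval~$[0,T]$ and all involved operators are bounded, so the integral converges in~$\mathcal{L}(\X,\X)$. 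Inserting this into the terminal condition gives~$\varphi_\mu^*(T)=M\left(e^{A_\mu T}x_\mu^0-x_\mu^T-\Gramian\,\varphi_\mu^*(T)\right)$, and moving the term~$M\Gramian\,\varphi_\mu^*(T)$ to the left-hand side produces~$\left(I+M\Gramian\right)\varphi_\mu^*(T)=M\left(e^{A_\mu T}x_\mu^0-x_\mu^T\right)$, which is precisely~\eqref{equ:linear-system-for-optimal-final-time-adjoint}.

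I do not expect a genuine obstacle: the only points deserving a word of care are the use of the variation-of-constants formula in the Hilbert-space setting and the commutation of the operator-valued integral with the vector~$\varphi_\mu^*(T)$, both of which are immediate from the boundedness of~$A_\mu$, $B_\mu$ and~$R^{-1}$ guaranteed by \Cref{as:continuity-parameter-to-system-matrices,as:weightingops} together with the finiteness of the time horizon. Note that invertibility of~$I+M\Gramian$ is not asserted by the Lemma, but it does hold, since~$\Gramian\geq 0$ and~$M\geq 0$ imply that~$M\Gramian$ is similar to the positive-semidefinite operator~$M^{1/2}\Gramian M^{1/2}$ and hence has spectrum contained in~$[0,\infty)$, so that~$-1$ is not an eigenvalue; this makes~\eqref{equ:linear-system-for-optimal-final-time-adjoint} a genuine characterization of~$\varphi_\mu^*(T)$.
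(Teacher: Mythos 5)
Your proposal is correct and follows essentially the same route as the paper, which derives the lemma in the text immediately preceding it: apply \Cref{thm:optimality-system} to get the terminal condition and the feedback law, use the variation-of-constants formula to write $x_\mu^*(T)=e^{A_\mu T}x_\mu^0-\Gramian\varphi_\mu^*(T)$, and rearrange. Your closing aside on the invertibility of $I+M\Gramian$ goes slightly beyond what the lemma asserts (and the word ``similar'' is imprecise when $M$ is only positive-semidefinite, though the spectral conclusion still holds via $\sigma(M\Gramian)\setminus\{0\}=\sigma(M^{1/2}\Gramian M^{1/2})\setminus\{0\}$), but this does not affect the proof itself.
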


Since the operator~$R$ is positive-definite, the same holds for~$R^{-1}$. Therefore, the Gramian~$\Gramian$ is self-adjoint and positive-semidefinite. We also emphasize at this point that the controllability Gramian as defined here depends on the parameter~$\mu\in\params$ in the same way as the involved system operators do.
\par
We make the following assumption that is supposed to hold throughout the rest of the paper and will become useful for the error estimation in~\Cref{sec:reduced-order-modeling-greedy}:
\begin{assumption}[Positivity of the product of the weighting and the Gramian operator]\label{as:symmetric-product}
    We assume that the operator~$M\Gramian\in\mathcal{L}(\X,\X)$ is positive-semidefinite for all parameters~$\mu\in\params$.
\end{assumption}
As a simple example of a weighting operator~$M\in\mathcal{L}(\X,\X)$ that fulfills~\Cref{as:symmetric-product}, we will use in our numerical experiments in~\Cref{sec:numerical-experiments} the choice~$M=\kappa I$ for a suitable constant~$\kappa>0$, where~$I\in\mathcal{L}(\X,\X)$ denotes the identity. More generally, the assumption will be satisfied if~$M$ and the Gramian~$\Gramian$ commute. 
\begin{remark}[Computation of products with the Gramian]\label{rem:applying-gramian-to-vectors}
    We emphasize that for a given vector~$p\in \X$, the product~$\Gramian p$ can be computed without constructing~$\Gramian$ explicitly. Instead, one uses that~$-\Gramian p=x_\mu(T)$ (see~\cref{equ:state-gramian-adjoint-relation}), where~$x_\mu$ solves system~\eqref{equ:optimality-system-main} with boundary conditions~$x_\mu(0)=0$ and~$\varphi_\mu(T)=p$, respectively. It is therefore sufficient to solve~\eqref{equ:optimality-system-main} by first solving the equation for~$\varphi_\mu$ backward in time, then computing~$u_\mu$, and finally solving the state equation for~$x_\mu$ forward in time. Assembling~$\Gramian\in\mathcal{L}(\X,\X)$ (which depends on the parameter~$\mu\in\params$) would be prohibitively costly and is infeasible for moderate to large scale systems when having to do so for multiple parameters.
\end{remark}

Because of large computational costs required for solving the optimal control problem for a single parameter (see also the discussions in~\Cref{subsec:computational-costs}), we aim for building a reduced order model that replaces the equation in~\eqref{equ:linear-system-for-optimal-final-time-adjoint} by a linear system of small dimension that can be solved faster while still providing a sufficiently accurate approximation of the optimal solution.

    \section{Reduced Order Modeling by a Greedy Algorithm}\label{sec:reduced-order-modeling-greedy}
In the previous section, we derived a linear system for the optimal final time adjoint state~$\varphi_\mu^*(T)$ and observed that the optimal control is already completely determined by~$\varphi_\mu^*(T)$. The main goal of this section is to extend the greedy control algorithm introduced in~\cite{lazar2016greedy} to the setting of parametrized optimal control problems of the form~\eqref{equ:parametrized-optimal-control-problem}.

\subsection{Error Estimation by considering the Residual}\label{subsec:residual-based-error-estimator}
Given a parameter~$\mu\in\params$ and an arbitrary vector~$p\in\X$, we define the error estimator~$\eta_\mu\colon\X\to\setR$ as
\begin{align}\label{equ:definition-error-estimator}
    \eta_\mu(p) \coloneqq \norm{M\left(e^{A_\mu T}x_\mu^0 - x_\mu^T\right) - (I+M\Gramian)p},
\end{align}
where the term inside the norm corresponds to the residual of equation~\eqref{equ:linear-system-for-optimal-final-time-adjoint}.
\par
We obtain the following theorem which states that~$\eta_\mu$ is an efficient and reliable error estimator for the final time adjoint state, where we use in the proof that the operator~$M\Gramian$ is positive (see~\Cref{as:symmetric-product}).
\begin{theorem}[Efficient and reliable error estimator for the distance from the adjoint]\label{thm:error-estimator-adjoint}
    Let~$\mu\in\params$ be a parameter, $\varphi_\mu^*(T)\in\X$ the corresponding optimal final time adjoint solving equation~\eqref{equ:linear-system-for-optimal-final-time-adjoint}, and let~$p\in\X$ be an approximate final time adjoint. Then it holds
    \begin{align}\label{equ:error-estimator-bounds}
        \norm{\varphi_\mu^*(T)-p} \quad \leq \quad \eta_\mu(p) \quad \leq \quad \norm{I+M\Gramian}_{\mathcal{L}(\X,\X)} \ \norm{\varphi_\mu^*(T)-p}.
    \end{align}
\end{theorem}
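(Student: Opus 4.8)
The plan is to recognize the residual inside $\eta_\mu$ as the operator $I+M\Gramian$ applied to the error $\varphi_\mu^*(T)-p$, after which both inequalities in \eqref{equ:error-estimator-bounds} fall out quickly — the right one from submultiplicativity of the operator norm, the left one from the positivity supplied by \Cref{as:symmetric-product}.

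First I would use \Cref{lem:linear-system}: since $\varphi_\mu^*(T)$ solves \eqref{equ:linear-system-for-optimal-final-time-adjoint}, the term $M\left(e^{A_\mu T}x_\mu^0-x_\mu^T\right)$ in the definition \eqref{equ:definition-error-estimator} of $\eta_\mu(p)$ equals $\left(I+M\Gramian\right)\varphi_\mu^*(T)$. Substituting and abbreviating $e\coloneqq\varphi_\mu^*(T)-p$, this gives
\[
    \eta_\mu(p) = \norm{\left(I+M\Gramian\right)\varphi_\mu^*(T) - \left(I+M\Gramian\right)p} = \norm{\left(I+M\Gramian\right)e}.
\]
The upper bound is then immediate from the definition of the operator norm, namely $\norm{\left(I+M\Gramian\right)e}\leq\norm{I+M\Gramian}_{\mathcal{L}(\X,\X)}\,\norm{e}$, which is exactly the right-hand inequality.

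For the lower bound I would expand the squared norm in the Hilbert space $\X$, using $\left(I+M\Gramian\right)e = e + M\Gramian e$:
\[
    \norm{\left(I+M\Gramian\right)e}^2 = \norm{e}^2 + 2\langle M\Gramian e, e\rangle + \norm{M\Gramian e}^2.
\]
By \Cref{as:symmetric-product} the operator $M\Gramian$ is positive-semidefinite, hence $\langle M\Gramian e, e\rangle\geq 0$; the term $\norm{M\Gramian e}^2$ is nonnegative as well. Therefore $\norm{\left(I+M\Gramian\right)e}^2\geq\norm{e}^2$, i.e.\ $\eta_\mu(p)\geq\norm{\varphi_\mu^*(T)-p}$, which is the left-hand inequality.

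I do not anticipate a genuine obstacle here; the one point deserving care is that $M\Gramian$ need not be self-adjoint, so "positive-semidefinite" must be read in the form $\langle M\Gramian v, v\rangle\geq 0$ for all $v\in\X$ (which is precisely what \Cref{as:symmetric-product} asserts), and it is exactly this that makes the cross term in the expansion nonnegative. Once the residual has been identified with $\left(I+M\Gramian\right)$ acting on the error, the rest is routine.
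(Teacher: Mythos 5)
Your proposal is correct and matches the paper's own proof essentially step for step: both substitute the identity from~\Cref{lem:linear-system} to rewrite the residual as~$(I+M\Gramian)(\varphi_\mu^*(T)-p)$, obtain the upper bound from the definition of the operator norm, and obtain the lower bound by expanding the squared norm and invoking the positivity of~$M\Gramian$ from~\Cref{as:symmetric-product}. Your closing remark that positivity must be read as~$\langle M\Gramian v, v\rangle\geq 0$ without assuming self-adjointness is a sensible clarification of exactly the point the paper relies on.
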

\begin{proof}
    For the optimal final time adjoint ~$\varphi_\mu^*(T)\in\X$, according to~\Cref{lem:linear-system},  it holds
    \[
        M(e^{A_\mu T}x_\mu^0-x_\mu^T)=(I+M\Gramian)\varphi_\mu^*(T).
    \]
     Plugging this equation into the definition of the error estimator, we obtain
    \begin{align*}
        \eta_\mu(p)^2 &= \norm{(I+M\Gramian)(\varphi_\mu^*(T)-p)}^2 \\
        &=\norm{\varphi_\mu^*(T)-p)}^2+\norm{M\Gramian(\varphi_\mu^*(T)-p)}^2+2\langle M\Gramian(\varphi_\mu^*(T)-p),(\varphi_\mu^*(T)-p)\rangle \\
        &\geq \norm{\varphi_\mu^*(T)-p)}^2,
    \end{align*}
    where we used that $M\Gramian$ is positive. From here the lower bound in~\eqref{equ:error-estimator-bounds} follows directly. The upper bound comes simply from the definition of the operator norm.
\end{proof}
The error estimator~$\eta_\mu(p)$ can be used to quantify the quality of an approximation of the optimal final time adjoint~$\varphi_\mu^*(T)$ by an arbitrary~$p\in\X$ without ever computing~$\varphi_\mu^*(T)$. Instead, it is only necessary to solve the linear initial value problem~\eqref{equ:optimality-system-main} twice: first with~$x_\mu(0)=0$, $\varphi_\mu(T)=p$ (in order to compute~$-\Gramian p$) and then with~$x_\mu(0)=x_\mu^0$, $\varphi_\mu(T)=0$ (i.e.~without control of the state equation~\eqref{equ:parametrized-control-system}) in order to determine~$e^{A_\mu T}x_\mu^0$ (the term~$e^{A_\mu T}x_\mu^0$ corresponds to the free dynamics). The error estimator~$\eta_\mu$ can therefore be evaluated much more efficiently than solving the full optimal control problem.
\par
In the following section, we describe an algorithm that constructs a low-dimensional subspace of~$\X$ in which an approximation of the optimal final time adjoint is searched. The basis of this subspace is chosen by a greedy algorithm that uses the error estimator~$\eta_\mu$ to determine the next parameter and in particular the associated final time adjoint that is added to the basis.

\subsection{Greedy Procedure for constructing a Reduced Basis}\label{subsec:greedy-for-reduced-basis}
Let~$\varepsilon>0$ be a prescribed tolerance. We aim at constructing a reduced subspace~$\X^N\subset\X$ of low dimension~$\dim\X^N=N$, such that~$\X^N$ approximates the manifold
\begin{align}\label{equ:definition-solution-manifold}
    \mathcal{M} \coloneqq \{\varphi_\mu^*(T):\mu\in\params\} \subset\X
\end{align}
of all possible optimal final time adjoints up to a tolerance of~$\varepsilon$. Since~$\params$ is compact and the mapping~$\params\ni\mu\mapsto\varphi_\mu^*(T)\in\X$ is Lipschitz continuous (which follows from~\Cref{as:continuity-parameter-to-system-matrices} and the characterization of~$\varphi_\mu^*(T)$ as solution of the linear equation in~\eqref{equ:linear-system-for-optimal-final-time-adjoint}), the set~$\mathcal{M}$ is also compact. The distance~$\operatorname{dist}(Y,Z)$ of a subspace~$Y\subset\X$ to a compact set~$Z\subset\X$ is defined as
\[
    \operatorname{dist}(Y,Z) \coloneqq \sup\limits_{z\in Z}\inf\limits_{y\in Y} \norm{y-z}.
\]
We thus search for a reduced space~$\X^N\subset\X$ such that~$\operatorname{dist}(\X^N,\mathcal{M})\leq\varepsilon$. To this end, greedy algorithms construct a sequence of reduced bases~$\Phi^0,\Phi^1,\dots$, starting with~$\Phi^0=\emptyset\subset\X$, and corresponding reduced spaces~$\X^0=\Span{\Phi^0}=\{0\},\X^1=\Span{\Phi^1},\dots$ by successively adding new basis functions. These functions are selected as snapshots, i.e.~optimal final time adjoints for certain parameters. Assume a reduced basis~$\Phi^k=\{\varphi_1,\dots,\varphi_k\}\subset\X$ of dimension~$k$ is given with associated reduced space~$\X^k=\Span{\Phi^k}$. The next basis function~$\varphi_{k+1}\in\X$ is chosen as~$\varphi_{k+1}=\varphi_{\mu_{k+1}}^*(T)$, where the parameter~$\mu_{k+1}\in\params$ is determined such that the distance~$\operatorname{dist}(\X^k,\{\varphi_{\mu_{k+1}}^*(T)\})$ of~$\varphi_{\mu_{k+1}}^*(T)$ to~$\X^k$ (or a suitable estimate of this distance that is cheaper to compute) is the largest among all parameters~$\mu\in\params$, i.e.~it holds
\[
    \mu_{k+1} \in \argmax\limits_{\mu\in\params}\,\operatorname{dist}(\X^k,\{\varphi_\mu^*(T)\}).
\]
After choosing the parameter~$\mu_{k+1}$, the optimal final time adjoint~$\varphi_{\mu_{k+1}}^*(T)$ is computed by solving~\cref{equ:linear-system-for-optimal-final-time-adjoint} and added to the reduced basis, i.e.~the new basis~$\Phi^{k+1}$ is defined as~$\Phi^{k+1}=\Phi^k\cup\{\varphi_{k+1}\}$ for~$\varphi_{k+1}=\varphi_{\mu_{k+1}}^*(T)$. After updating the reduced basis, the error for the parameter~$\mu_{k+1}$ vanishes, since the optimal final time adjoint for~$\mu_{k+1}$ is now contained in the reduced space~$\X^{k+1}$, i.e.~$\varphi_{\mu_{k+1}}^*(T)\in\Phi^{k+1}\subset\X^{k+1}$.
\par
To make the construction of~$\Phi^N$ computationally feasible one usually restricts the search for a new parameter to a finite set~$\paramstrain\subset\params$ of~$n_\mathrm{train}\coloneqq\lvert\paramstrain\rvert<\infty$ training parameters. This set~$\paramstrain$ of training parameters is chosen to be dense in the overall set~$\params$ of parameters (in the sense that the union of balls with certain radius around the training parameters covers the whole parameter set), see also the pseudocode of the procedure provided in~\Cref{alg:offline-greedy} below. Furthermore, instead of computing the true distance~$\operatorname{dist}(\X^k,\{\varphi_\mu^*(T)\})$ of a subspace~$\X^k$ to~$\varphi_\mu^*(T)$ for all training parameters~$\mu\in\paramstrain$, an estimate of this distance is used that does not require the computation of~$\varphi_\mu^*(T)$. In our setting, we will estimate the distance using the error estimator from~\cref{equ:definition-error-estimator} and a suitably chosen approximate final time adjoint.
\par
Once having constructed a reduced basis~$\Phi^k$, it is used to obtain an approximate final time adjoint for an arbitrary parameter~$\mu\in\params$. To this end, we first compute~$x_i^\mu=(I+M\Gramian)\varphi_i\in\X$ for all~$i=1,\dots,k$. The state~$-\Gramian\varphi_i\in\X$ for~$i\in\{1,\dots,k\}$ corresponds to the final time state~$x_\mu(T)$ of the system~\eqref{equ:optimality-system-odes} with zero initial datum and the control determined by the basis function~$\varphi_i\in\Phi^k$. Thus, $x_i^\mu$ can be seen as  the perturbation of the final state~$-\Gramian\varphi_i\in\X$ by the operator~$-\left((\Gramian)^{-1} + M\right)$, i.e.~it holds~$-\left((\Gramian)^{-1} + M\right)\left(-\Gramian\varphi_i\right)=(I+M\Gramian)\varphi_i=x_i^\mu$. Hence, the space~$\redSpaceTransformed^k\coloneqq\Span{x_1^\mu,\dots,x_k^\mu}=(I+M\Gramian)\X^k\subset\X$ contains all possible perturbed final time states that can be generated from the final time adjoints in the reduced basis~$\Phi^k$ for the system determined by the parameter~$\mu$ and starting from zero initial conditions. This motivates, together with the linear equation for the optimal final time adjoint from~\Cref{lem:linear-system}, to consider the projection of~$M\left(e^{A_\mu T}x_\mu^0 - x_\mu^T\right)$ onto~$\redSpaceTransformed^k$ and therefore the distance~$\operatorname{dist}(\redSpaceTransformed^k,\{M\left(e^{A_\mu T}x_\mu^0 - x_\mu^T\right)\})$, see also~\Cref{fig:proof-error-estimator-reduced-space}. Let us denote by~$P_Y(x)\in Y$ the orthogonal projection of a vector~$x\in\X$ onto a subspace~$Y\subset\X$. Then it holds
\[
    \operatorname{dist}\left(\redSpaceTransformed^k,\left\{M\left(e^{A_\mu T}x_\mu^0 - x_\mu^T\right)\right\}\right) = \norm{M\left(e^{A_\mu T}x_\mu^0 - x_\mu^T\right) - P_{\redSpaceTransformed^k}\Big(M\left(e^{A_\mu T}x_\mu^0 - x_\mu^T\right)\Big)}.
\]
We can express the projection in the basis of~$\redSpaceTransformed^k$ given by the vectors~$x_1^\mu,\dots,x_k^\mu$ (which form a linearly independent set since the corresponding parameters~$\mu_1,\dots,\mu_k\in\paramstrain$ were selected by the greedy procedure). Let us write
\begin{align}\label{equ:projection-onto-parameter-dependent-state-space}
    P_{\redSpaceTransformed^k}\Big(M\left(e^{A_\mu T}x_\mu^0 - x_\mu^T\right)\Big) = \sum\limits_{i=1}^{k} \alpha_i^\mu x_i^\mu \in \redSpaceTransformed^k
\end{align}
for some coefficients~$\alpha_1^\mu,\dots,\alpha_k^\mu\in\setR$. Then we choose the approximate final time adjoint~$\tilde{\varphi}_\mu^k\in\X^k$ as
\begin{align}\label{equ:definition-approximate-adjoint}
    \tilde{\varphi}_\mu^k = \sum\limits_{i=1}^{k} \alpha_i^\mu \varphi_i.
\end{align}
With these definitions, we have that
\[
    (I+M\Gramian)\tilde{\varphi}_\mu^k = \sum\limits_{i=1}^{k} \alpha_i^\mu x_i^\mu = P_{\redSpaceTransformed^k}\Big(M\left(e^{A_\mu T}x_\mu^0 - x_\mu^T\right)\Big),
\]
and in particular
\[
    \eta_\mu(\tilde{\varphi}_\mu^k)=\operatorname{dist}\left(\redSpaceTransformed^k,\left\{M\left(e^{A_\mu T}x_\mu^0 - x_\mu^T\right)\right\}\right).
\]
This allows us to estimate the efficiency of the reduced space~$\X^k$ through the error estimator~$\eta_\mu$, i.e.
\[
    \operatorname{dist}\left(\X^k,\left\{\varphi_\mu^*(T)\right\}\right) \approx \eta_\mu(\tilde{\varphi}_\mu^k).
\]
More precisely, the following theorem holds.
\begin{theorem}[Efficient and reliable error estimator for a reduced space]\label{thm:error-estimator-reduced-space}
    Let~$\mu\in\params$ be a parameter and~$\varphi_\mu^*(T)\in\X$ the optimal final time adjoint solving the optimality system in~\cref{equ:optimality-system-main}. Then, for the error estimator~$\eta_\mu(\tilde{\varphi}_\mu^k)$ with~$\eta_\mu$ introduced in~\cref{equ:definition-error-estimator} and~$\tilde{\varphi}_\mu^k$ from~\cref{equ:definition-approximate-adjoint}, it holds
    \begin{equation}\label{equ:RS-error}
        \operatorname{dist}(\X^k,\{\varphi_\mu^*(T)\}) \quad \leq \quad \eta_\mu(\tilde{\varphi}_\mu^k) \quad \leq \quad \norm{I+M\Gramian}_{\mathcal{L}(\X,\X)}\cdot\operatorname{dist}(\X^k,\{\varphi_\mu^*(T)\}).
    \end{equation}
    
\end{theorem}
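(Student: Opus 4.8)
The plan is to build everything on the identities already established in the discussion preceding the theorem: by \Cref{lem:linear-system} we have $M\left(e^{A_\mu T}x_\mu^0 - x_\mu^T\right) = (I+M\Gramian)\varphi_\mu^*(T)$, by construction $\redSpaceTransformed^k = (I+M\Gramian)\X^k$, and, crucially, $\eta_\mu(\tilde{\varphi}_\mu^k) = \operatorname{dist}\left(\redSpaceTransformed^k,\left\{M\left(e^{A_\mu T}x_\mu^0 - x_\mu^T\right)\right\}\right)$. Combining these, $\eta_\mu(\tilde{\varphi}_\mu^k)$ equals the distance of $(I+M\Gramian)\varphi_\mu^*(T)$ from the image space $(I+M\Gramian)\X^k$, and the whole proof amounts to transporting this distance back and forth through the operator $I+M\Gramian$ and controlling the distortion it induces.

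For the lower bound I would argue directly and avoid the transformed space entirely. Since $\tilde{\varphi}_\mu^k\in\X^k$, the best-approximation property of the orthogonal projection onto $\X^k$ gives $\operatorname{dist}(\X^k,\{\varphi_\mu^*(T)\}) \leq \norm{\varphi_\mu^*(T)-\tilde{\varphi}_\mu^k}$, and applying \Cref{thm:error-estimator-adjoint} with the approximate adjoint $p=\tilde{\varphi}_\mu^k$ yields $\norm{\varphi_\mu^*(T)-\tilde{\varphi}_\mu^k} \leq \eta_\mu(\tilde{\varphi}_\mu^k)$. Chaining the two inequalities produces the left-hand inequality of \eqref{equ:RS-error}.

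For the upper bound, a naive application of \Cref{thm:error-estimator-adjoint} is insufficient, because $\tilde{\varphi}_\mu^k$ is the coefficient-lift of a projection in $\redSpaceTransformed^k$, not the orthogonal projection of $\varphi_\mu^*(T)$ onto $\X^k$, so $\norm{\varphi_\mu^*(T)-\tilde{\varphi}_\mu^k}$ cannot be replaced by $\operatorname{dist}(\X^k,\{\varphi_\mu^*(T)\})$. Instead I would start from $\eta_\mu(\tilde{\varphi}_\mu^k) = \operatorname{dist}\left(\redSpaceTransformed^k,\{(I+M\Gramian)\varphi_\mu^*(T)\}\right) = \inf_{y\in\redSpaceTransformed^k}\norm{(I+M\Gramian)\varphi_\mu^*(T)-y}$, then use $\redSpaceTransformed^k=(I+M\Gramian)\X^k$ to reparametrize the infimum over $y=(I+M\Gramian)z$ with $z$ ranging over $\X^k$, obtaining $\inf_{z\in\X^k}\norm{(I+M\Gramian)(\varphi_\mu^*(T)-z)}$. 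Bounding the norm under the infimum by $\norm{I+M\Gramian}_{\mathcal{L}(\X,\X)}\,\norm{\varphi_\mu^*(T)-z}$ and pulling the constant out of the infimum gives precisely $\norm{I+M\Gramian}_{\mathcal{L}(\X,\X)}\cdot\operatorname{dist}(\X^k,\{\varphi_\mu^*(T)\})$, which is the right-hand inequality of \eqref{equ:RS-error}.

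The main (and fairly mild) obstacle is justifying the reparametrization of the infimum: one must observe that $z\mapsto(I+M\Gramian)z$ maps $\X^k$ onto $\redSpaceTransformed^k$ by the very definition of $\redSpaceTransformed^k$, so the admissible set of $y$ is exactly $\{(I+M\Gramian)z : z\in\X^k\}$ and the two infima coincide — note that only surjectivity onto $\redSpaceTransformed^k$ is needed here, not injectivity of the operator on $\X^k$. Everything else reduces to the best-approximation property of orthogonal projections, \Cref{lem:linear-system}, \Cref{thm:error-estimator-adjoint}, and the definition of the operator norm. As an aside, the lower bound can also be recovered within this same framework by instead bounding $\norm{(I+M\Gramian)v}$ from below by $\norm{v}$, which holds because $M\Gramian$ is positive-semidefinite by \Cref{as:symmetric-product}, mirroring the computation in the proof of \Cref{thm:error-estimator-adjoint}.
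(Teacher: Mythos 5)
Your proposal is correct and follows essentially the same route as the paper: the lower bound chains the best-approximation property of $P_{\X^k}$ (which the paper phrases via the Pythagorean identity) with \Cref{thm:error-estimator-adjoint}, and the upper bound uses the identity $\eta_\mu(\tilde{\varphi}_\mu^k)=\operatorname{dist}\bigl(\redSpaceTransformed^k,\{(I+M\Gramian)\varphi_\mu^*(T)\}\bigr)$ together with $\redSpaceTransformed^k=(I+M\Gramian)\X^k$ and the operator norm. Your reparametrization of the infimum over $\redSpaceTransformed^k$ is just a slightly more symmetric way of doing what the paper does by testing against the single competitor $(I+M\Gramian)P_{\X^k}(\varphi_\mu^*(T))$, and your remark that only surjectivity onto $\redSpaceTransformed^k$ is needed is accurate.
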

\begin{proof}
    Due to orthogonality of the projection and the Pythagorean theorem, it holds
    \[
        \norm{\varphi_\mu^*(T)-\tilde{\varphi}_\mu^k}^2 = \norm{\varphi_\mu^*(T)-P_{\X^k}(\varphi_\mu^*(T))}^2 + \norm{\tilde{\varphi}_\mu^k-P_{\X^k}(\varphi_\mu^*(T))}^2
    \]
    and therefore
    \begin{align*}
        \operatorname{dist}(\X^k,\{\varphi_\mu^*(T)\})^2 &= \norm{\varphi_\mu^*(T)-P_{\X^k}(\varphi_\mu^*(T))}^2 \\
        &= \norm{\varphi_\mu^*(T)-\tilde{\varphi}_\mu^k}^2 - \norm{\tilde{\varphi}_\mu^k-P_{\X^k}(\varphi_\mu^*(T))}^2 \\
        &\leq \eta_\mu(\tilde{\varphi}_\mu^k)^2
    \end{align*}
    by~\Cref{thm:error-estimator-adjoint}. This implies the lower bound in~\cref{equ:RS-error}. 
    \par
    In order to obtain the upper one, we estimate
    \begin{align*}
        \eta_\mu(\tilde{\varphi}_\mu^k) &= \norm{M\left(e^{A_\mu T}x_\mu^0 - x_\mu^T\right) - (I+M\Gramian)\tilde{\varphi}_\mu^k} \\
        &= \norm{(I+M\Gramian)\varphi_\mu^*(T)-P_{\redSpaceTransformed^k}\Big((I+M\Gramian)\varphi_\mu^*(T)\Big)} \\
        &\leq \norm{(I+M\Gramian)\varphi_\mu^*(T)-(I+M\Gramian)P_{\X^k}(\varphi_\mu^*(T))} \\
        &\leq \norm{I+M\Gramian}_{\mathcal{L}(\X,\X)}\norm{\varphi_\mu^*(T)-P_{\X^k}(\varphi_\mu^*(T))} \\
        &= \norm{I+M\Gramian}_{\mathcal{L}(\X,\X)}\operatorname{dist}(\X^k,\{\varphi_\mu^*(T)\}).
    \end{align*}
    Here we used that~$(I+M\Gramian)P_{\X^k}(\varphi_\mu^*(T))\in\redSpaceTransformed^k$, which follows from the fact that~$P_{\X^k}(\varphi_\mu^*(T))\in\X^k$ and~$\redSpaceTransformed^k = (I+M\Gramian)\X^k$.
\end{proof}

For a visualization of the quantities involved in the proof of~\Cref{thm:error-estimator-reduced-space} and their relationships, see~\Cref{fig:proof-error-estimator-reduced-space}.
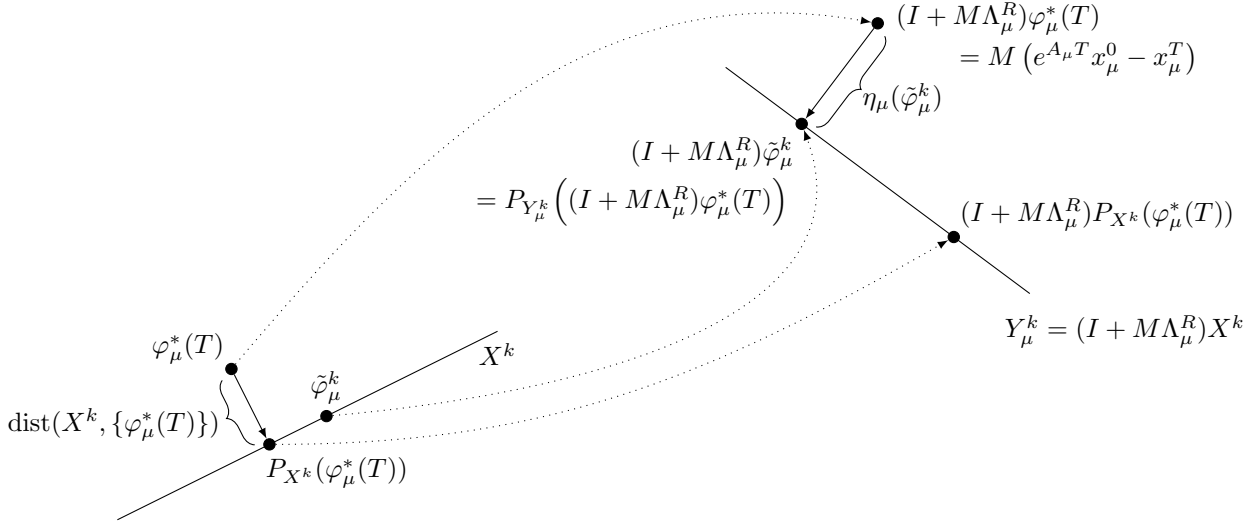
\begin{figure}[H]
    \centering
    \begin{tikzpicture}
		\draw (0,0) -- (5,2.5);
		\node () at (5,2.2) {$\X^k$};

		\node[draw, circle, fill, inner sep=1.5pt, label={[label distance=-1pt]90:{$\tilde{\varphi}_\mu^k$}}] (tilde-varphi-mu-k) at (2.75, 1.375) {};
		\node[draw, circle, fill, inner sep=1.5pt, label={[label distance=-1pt, xshift=-5pt]273:{$P_{\X^k}(\varphi_\mu^*(T))$}}] (P-Xk-varphi-mu-star) at (2, 1) {};
		\node[draw, circle, fill, inner sep=1.5pt, label={[label distance=-4pt]115:{$\varphi_\mu^*(T)$}}] (varphi-mu-star) at (1.5, 2) {};
		\draw[->, >=latex] (varphi-mu-star) -- (P-Xk-varphi-mu-star);

		\draw (8,6) -- (12,3);
		\node () at (13.25,2.5) {$\redSpaceTransformed^k = (I+M\Gramian)\X^k$};

		\node[draw, circle, fill, inner sep=1.5pt, label={[label distance=-4pt, xshift=0pt, yshift=-3pt]245:{\shortstack{\qquad\qquad\qquad$(I+M\Gramian)\tilde{\varphi}_\mu^k$\\$=P_{\redSpaceTransformed^k}\Big((I+M\Gramian)\varphi_\mu^*(T)\Big)$}}}] (I-tilde-varphi-mu-k) at (9, 5.25) {};
		\node[draw, circle, fill, inner sep=1.5pt, label={[label distance=-4pt]45:{$(I+M\Gramian)P_{\X^k}(\varphi_\mu^*(T))$}}] (I-P-Xk-varphi-mu-star) at (11, 3.75) {};
		\node[draw, circle, fill, inner sep=1.5pt, label={[label distance=-18pt, xshift=-22pt, yshift=-12pt]45:{\shortstack{$(I+M\Gramian)\varphi_\mu^*(T)$\\\qquad\qquad\qquad$=M\left(e^{A_\mu T}x_\mu^0 - x_\mu^T\right)$}}}] (I-varphi-mu-star) at (10, 6.58333) {};
		\draw[->, >=latex] (I-varphi-mu-star) -- (I-tilde-varphi-mu-k);

		\draw[->, >=latex, dotted] (tilde-varphi-mu-k) to[out=5, in=-70] (I-tilde-varphi-mu-k);
		\draw[->, >=latex, dotted] (P-Xk-varphi-mu-star) to[out=0, in=-150] (I-P-Xk-varphi-mu-star);
		\draw[->, >=latex, dotted] (varphi-mu-star) to[out=50, in=170] (I-varphi-mu-star);

		\draw [decorate, decoration={brace, raise=5pt, amplitude=5pt}] (I-varphi-mu-star) -- (I-tilde-varphi-mu-k) node[midway, below right, xshift=5pt] {$\eta_\mu(\tilde{\varphi}_\mu^k)$};
		\draw [decorate, decoration={brace, raise=5pt, amplitude=5pt}] (P-Xk-varphi-mu-star) -- (varphi-mu-star) node[midway, below left, xshift=-8pt, yshift=3pt] {$\operatorname{dist}(\X^k,\{\varphi_\mu^*(T)\})$};
	\end{tikzpicture}
    \caption{Visualization of the different final time adjoints and states that occur in the proof of~\Cref{thm:error-estimator-reduced-space}. Solid arrows represent orthogonal projections onto the respective spaces~$\X^k$ and~$\redSpaceTransformed^k$ while dotted arrows visualize applications of~$I+M\Gramian$.}
    \label{fig:proof-error-estimator-reduced-space}
\end{figure}

\begin{remark}[On the efficiency bound in~\Cref{thm:error-estimator-reduced-space}]
    We emphasize at this point that the first bound in~\Cref{thm:error-estimator-reduced-space} holds for an arbitrary choice~$p\in\X^k$ for the approximate final time adjoint, i.e.~we have~$\operatorname{dist}(\X^k,\{\varphi_\mu^*(T)\})\leq\eta_\mu(p)$ for all~$p\in\X^k$. However, the efficiency of the error estimator~$\eta_\mu(\tilde{\varphi}_\mu^k)$, i.e.~the second inequality in~\Cref{thm:error-estimator-reduced-space}, heavily relies on the choice of the approximate final time adjoint~$\tilde{\varphi}_\mu^k$. The efficiency bound is required to maintain the convergence of the weak greedy algorithm, see~\Cref{subsec:error-analysis-greedy-algorithm}. 
\end{remark}

To formulate the greedy algorithm in pseudo-code, we observe that the following properties hold, which will allow for an error analysis of the greedy algorithm in~\Cref{subsec:error-analysis-greedy-algorithm}: The parameter to solution map, given as
\[
    \params\ni\mu\mapsto\varphi_\mu^*(T)=\left(I+M\Gramian\right)^{-1}M\left(e^{A_\mu T}x_\mu^0 - x_\mu^T\right)\in\X,
\]
is Lipschitz continuous (due to the Lipschitz continuity in~\Cref{as:continuity-parameter-to-system-matrices}, and since the function that maps an operator to its inverse is also Lipschitz continuous), as already mentioned above. The constant~$C_{\varphi^*}>0$ is defined as the Lipschitz constant of this mapping, i.e.~it holds
\[
    \norm{\varphi_\mu^*(T)-\varphi_{\tilde{\mu}}^*(T)} \leq C_{\varphi^*}\norm{\mu-\tilde{\mu}}\qquad\text{for all }\mu,\tilde{\mu}\in\params.
\]
Moreover, the norm~$\norm{I+M\Gramian}$ is uniformly bounded over the parameter domain. The constant~$C_\Lambda>0$ is defined as
\[
    C_\Lambda \coloneqq \sup_{\mu\in\params} \norm{I+M\Gramian}_{\mathcal{L}(\X,\X)} < \infty.
\]
Both of these statements are direct consequences of~\Cref{as:continuity-parameter-to-system-matrices} and the compactness of the parameter set~$\params$. We further remark that it holds~$C_\Lambda\geq 1$, which immediately follows from~\cref{equ:error-estimator-bounds}, and that~$C_{\varphi^*}\geq 0$. In addition to the constants above, we define~$0<\gamma\leq 1$ as
\begin{align}\label{equ:definition-greedy-constant-gamma}
    \gamma \coloneqq \frac{1}{C_{\varphi^*}+C_\Lambda}.
\end{align}
The constant~$\gamma$ will play a crucial role in the error analysis and in the performance of the greedy algorithm.
\par
The procedure of the greedy algorithm is summarized in~\Cref{alg:offline-greedy}.
\begin{algorithm}[H]
    \caption{Offline phase of the greedy procedure}\label{alg:offline-greedy}
    \begin{algorithmic}[1]
        \Require Tolerance~$\varepsilon>0$
        \Ensure Reduced basis~$\Phi^N\subset\X$, reduced space~$\X^N=\Span{\Phi^N}\subset\X$, training parameters and associated reduced coefficients collected in a set~$D_\mathrm{train}\subset\paramstrain\times\setR^N$
        \Procedure{Greedy}{$\varepsilon$}
        \State $\tilde{\varepsilon} \gets C_\Lambda\gamma\varepsilon$,\quad$\delta \gets \frac{\tilde{\varepsilon}}{C_\Lambda}$
        \State define a training set~$\paramstrain\subset\params$ such that for all~$\mu\in\params$ there exists~$\tilde{\mu}\in\paramstrain$ with~$\norm{\mu-\tilde{\mu}}\leq\delta$
        \State $k \gets 0$,\quad$\Phi^0 \gets \emptyset$,\quad$\X^0 \gets \{0\}\subset\X$\label{lst:start-greedy-without-tolerance}
        \State $\tilde{\varphi}_\mu^0 \gets 0$ for all~$\mu\in\paramstrain$
        \State select next parameter~$\mu_{1} \gets \argmax_{\mu\in\paramstrain} \eta_\mu(\tilde{\varphi}_\mu^0)$
        \While{$\eta_{\mu_{k+1}}(\tilde{\varphi}_{\mu_{k+1}}^k) > \tilde{\varepsilon}$}\label{lst:offline-termination-criterion}
            \State compute optimal final time adjoint~$\varphi_{k+1} \gets \varphi_{\mu_{k+1}}^*(T)$ by solving~\cref{equ:linear-system-for-optimal-final-time-adjoint}
            \State $\Phi^{k+1} \gets \Phi^k\cup\{\varphi_{k+1}\}$\qquad (apply orthonormalization using Gram-Schmidt algorithm if desired)
            \State $\X^{k+1} \gets \Span{\Phi^{k+1}}$
            \State $k \gets k+1$
            \For{$\mu\in\paramstrain$}
                \State compute~$x_i^\mu \gets (I+M\Gramian)\varphi_i$ for~$i=1,\dots,k$
                \State assemble operator~$\bar{\X}_\mu \gets [x_1^\mu\ \cdots\ x_k^\mu]\in \mathcal{L}(\setR^k,\X)$ for projection onto~$\redSpaceTransformed^k$
                \State compute coefficients~$\alpha^\mu=(\alpha_1^\mu,\dots,\alpha_k^\mu)^\top\in\setR^k$ as solution of the~$k\times k$~linear system (see~\cref{equ:projection-onto-parameter-dependent-state-space})\label{lst:computation-coefficients-offline-greedy}
                    \[
                        \bar{\X}_\mu^*\bar{\X}_\mu\alpha^\mu=\bar{\X}_\mu^* M\left(e^{A_\mu T}x_\mu^0 - x_\mu^T\right)
                        \vspace{.5em}
                    \]
                \State compute final time adjoint~$\tilde{\varphi}_\mu^k \gets \sum_{i=1}^{k} \alpha_i^\mu \varphi_i$
            \EndFor
            \State select next parameter~$\mu_{k+1} \gets \argmax_{\mu\in\paramstrain} \eta_\mu(\tilde{\varphi}_\mu^k)$
        \EndWhile
        \Return{$\Phi^N \gets \Phi^k$, $\X^N \gets\X^k$, $D_\mathrm{train} \gets \{(\mu,\alpha^\mu):\mu\in\paramstrain\}$}
        \EndProcedure
    \end{algorithmic}
\end{algorithm}

In the greedy procedure in~\Cref{alg:offline-greedy} we also collect and return all training parameters and the corresponding reduced coefficients in the set~$D_\mathrm{train}$. This will be convenient and useful for the machine learning training in~\Cref{sec:reduced-order-machine-learning}, see in particular~\Cref{alg:offline-machine-learning}.

\begin{remark}[Orthonormalization of the reduced basis]
    To improve numerical stability and to simplify computations of projections, new basis functions can be orthonormalized against the previous ones before adding them to the reduced basis. For that purpose, for instance the Gram-Schmidt algorithm can be used which results in an orthonormal reduced basis~$\Phi^{N}$.
\end{remark}

\begin{remark}[Construction of a reduced basis without using the greedy procedure]
    Instead of applying the greedy algorithm described above, it is also possible to determine a reduced basis using methods such as the POD method~\cite{graessle2021model}. This algorithm requires a set of given training snapshots~$\varphi_{\mu_1}^*(T),\dots,\varphi_{\mu_{n_\mathrm{train}}}^*(T)\in\X$ for several training parameters~$\mu_1,\dots,\mu_{n_\mathrm{train}}\in\params$ that has to be computed beforehand. However, solving the optimal control problem many times is very costly. Since an efficient error estimator is available in our setting, it might therefore be beneficial to select the reduced basis functions in a greedy manner. Nevertheless, if enough training data is already available, the proper orthogonal decomposition might be a plausible alternative to the greedy procedure.
\end{remark}

\subsection{Error Analysis of the Greedy Algorithm}\label{subsec:error-analysis-greedy-algorithm}
In this section, we will investigate the error of the greedy algorithm and prove that~\Cref{alg:offline-greedy} indeed provides a reduced basis~$\Phi^N$ and corresponding reduced space~$\X^N\subset\X$ such that each optimal final time adjoint~$\varphi_\mu^*(T)\in\X$ can be approximated by an error of at most~$\varepsilon$ in the reduced space~$\X^N$ (see~\Cref{thm:weak-greedy-and-approximation-error}). Before doing so, we give a short general introduction to (weak) greedy algorithms and recall a result by DeVore et al.~\cite{devore2013greedy} (see~\Cref{thm:devore-greedy}) stating that weak greedy algorithms produce approximating sequences of reduced spaces with the same convergence rates as the optimal spaces in the sense of Kolmogorov.
\par
Let~$V$ be a Hilbert space and~$K\subset V$ a compact subset of~$V$. Greedy algorithms aim at constructing a sequence of subspaces~$V_N\subset V$ of dimension~$N\in\setN$ that approximate the set~$K$. Similarly to~\Cref{alg:offline-greedy} described in~\Cref{subsec:greedy-for-reduced-basis}, the spaces~$V_N$ are constructed by selecting new basis functions in a ``greedy'' fashion, i.e.~by selecting elements from~$K$ that are currently poorly approximated by elements from~$V_N$. The general procedure is stated in the following pseudocode:
\begin{algorithm}[H]
    \caption{General (weak) greedy algorithm}\label{alg:general-weak-greedy}
    \begin{algorithmic}[1]
        \Require Greedy constant~$\gamma\in(0,1]$, tolerance~$\varepsilon>0$
        \Ensure Reduced basis~$\Psi^N\subset V$, reduced space~$V^N=\Span{\Psi^N}\subset V$
        \State $N\gets 0$,\quad$\Psi^N=\emptyset$,\quad$V^0=\{0\}\subset V$
        \While{$\operatorname{dist}(V^N,K)>\varepsilon$}
            \State choose next element~$v_{N+1}\in K$ such that it holds
            \begin{equation}\label{equ:greedy-choice-condition}
                \hspace{2em}\operatorname{dist}(V^N,\{v_{N+1}\}) \geq \gamma\cdot\operatorname{dist}(V^N,K)
                \vspace{.75em}
            \end{equation}
            \State $\Psi^{N+1} \gets \Psi^N\cup\{v_{N+1}\}$
            \State $V^{N+1} \gets \Span{\Psi^{N+1}}$
            \State $N \gets N+1$
        \EndWhile
        \Return{$\Psi^N$, $V^N$}
    \end{algorithmic}
\end{algorithm}
For a constant~$\gamma<1$ the algorithm above is called \emph{weak} greedy algorithm, while for~$\gamma=1$ the procedure is also known as \emph{strong} greedy algorithm. By considering the optimal reduced subspace of dimension~$N\in\setN$, we can measure the quality of the reduced space obtained by the weak greedy algorithm. We denote by
\begin{equation}
\label{d_n}
    d_N(K) \coloneqq \inf\limits_{\substack{W\subset V,\\\dim W=N}}\operatorname{dist}(W,K)    
\end{equation}
the so-called \emph{Kolmogorov~$N$-width}, where~$W$ denotes an~$N$-dimensional linear subspace of~$V$. In this way, $d_N(K)$ measures the performance of the optimal approximation of~$K$ by a linear space of fixed dimension~$N$.
\par
In contrast, by~$\sigma_N(K)$ we denote the error of the reduced space~$V^N\subset V$ constructed by the greedy algorithm and defined as
\begin{equation}
\label{sigma_n}
    \sigma_N(K) \coloneqq \operatorname{dist}(V^N, K).
\end{equation}
The importance and efficiency of (weak) greedy algorithms is ensured by the following theorem that gives a connection between the decay of the Kolmogorov~$N$-width~$d_N(K)$ with respect to~$N$ and the decay of the error of the reduced space~$\sigma_N(K)$.
\begin{theorem}[DeVore et al.~\cite{devore2013greedy}, Corollary 3.3 (ii) and (iii)]\label{thm:devore-greedy}
    For the weak greedy algorithm with constant~$\gamma$ in a Hilbert space~$V$ and a compact set~$K\subset V$, we have the following:
    \begin{enumerate}
        \item[(i)] If~$d_N(K)\leq C_0N^{-\alpha}$, $N=1,2,\dots$, then~$\sigma_N(K)\leq C_1N^{-\alpha}$, $N=1,2,\dots$, with~$C_1\coloneqq 2^{5\alpha+1}\gamma^{-2}C_0$.
        \item[(ii)] If~$d_N(K)\leq C_0e^{-c_0N^\alpha}$, $N=1,2,\dots$, then~$\sigma_N(K)\leq\sqrt{2C_0}\gamma^{-1}e^{-c_1N^\alpha}$, $N=1,2,\dots$, where~$c_1\coloneqq 2^{-1-2\alpha}c_0$.
    \end{enumerate}
\end{theorem}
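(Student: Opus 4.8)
\emph{Proof plan.} As \Cref{thm:devore-greedy} is quoted verbatim from~\cite{devore2013greedy}, strictly speaking nothing is to be shown and one simply invokes that reference; for orientation I sketch how the argument transfers decay of the Kolmogorov width into decay of the greedy error. The plan has three parts: (i) read off from the weak greedy selection a lower-triangular matrix whose diagonal is squeezed between~$\gamma\,\sigma_n(K)$ and~$\sigma_n(K)$; (ii) bound its determinant from above by exploiting a near-optimal~$m$-dimensional subspace, thereby coupling~$\sigma_n(K)$ to~$d_m(K)$; (iii) optimize over~$m$ and, in the polynomial case, bootstrap. For part~(i), the run of~\Cref{alg:general-weak-greedy} produces~$v_1,v_2,\dots\in K$ with~$\operatorname{dist}(v_{n+1},V^n)\geq\gamma\,\sigma_n(K)$, and~$\sigma_n(K)$ is non-increasing because~$V^n\subset V^{n+1}$; moreover~$\norm{v_i}=\operatorname{dist}(v_i,V^0)\leq\sigma_0(K)$ since~$v_i\in K$. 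Applying Gram--Schmidt to~$v_1,v_2,\dots$ yields an orthonormal system~$e_1,e_2,\dots$, and~$a_{i,j}\coloneqq\langle v_i,e_j\rangle$ is lower triangular with~$a_{i,i}=\operatorname{dist}(v_i,V^{i-1})$, hence~$\gamma\,\sigma_{i-1}(K)\leq a_{i,i}\leq\sigma_{i-1}(K)$ and~$\sum_{j\leq i}a_{i,j}^2=\norm{v_i}^2$.

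For part~(ii) I would fix~$n$ and~$m<n$, pick (by compactness of~$K$) an~$m$-dimensional subspace~$Y$ with~$\sup_{v\in K}\operatorname{dist}(v,Y)\leq d_m(K)$, and, after replacing~$Y$ by its orthogonal projection onto~$V^n$, write each row of the~$n\times n$ coordinate block as the coordinates of~$P_Y v_i$ — a matrix of rank at most~$m$ — plus a remainder of Euclidean length at most~$d_m(K)$. Estimating~$\lvert\det\rvert=\prod_i a_{i,i}$ from below by~$\gamma^n\prod_{i<n}\sigma_i(K)$ and from above via Weyl's singular-value inequalities (so that all but~$m$ singular values of the block are governed by~$d_m(K)$) together with the arithmetic--geometric-mean inequality, I expect to arrive at an inequality of the shape
\[
    \sigma_{cm}(K)\;\leq\;\frac{\sqrt{2}}{\gamma}\,\sqrt{\sigma_m(K)\,d_m(K)}
\]
for a small absolute constant~$c$ (with~$\sigma_0(K)$ a legitimate, weaker substitute for~$\sigma_m(K)$ here).

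For part~(iii): in the sub-exponential regime~$d_N(K)\leq C_0 e^{-c_0 N^\alpha}$, a single application of the above with~$m$ of order~$N/2$ already yields~$\sigma_N(K)\leq\sqrt{2C_0}\,\gamma^{-1}e^{-c_1 N^\alpha}$, the polynomial prefactors being sub-exponential and absorbed into the passage from~$c_0$ to~$c_1=2^{-1-2\alpha}c_0$. In the algebraic regime~$d_N(K)\leq C_0 N^{-\alpha}$ one iteration only bounds~$\sigma_N(K)$ in terms of~$\sqrt{\sigma_0(K)\,d_{N/2}(K)}$; iterating along a dyadic sequence of indices drives the~$\sigma_0(K)$-exponent to zero and, since the greedy constant enters once per step, turns the single~$\gamma^{-1}$ into~$\gamma^{-2}$ (because~$\sum_{k\geq0}2^{-k}=2$), while collecting the powers of~$2$ produced by the dyadic indices and the arithmetic--geometric-mean steps gives the prefactor~$2^{5\alpha+1}C_0$, so that~$\sigma_N(K)\leq 2^{5\alpha+1}\gamma^{-2}C_0 N^{-\alpha}$.

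The hard part is the linear-algebraic estimate of part~(ii): extracting the clean factor~$d_m(K)$ while keeping the combinatorial factors coming from the singular-value and averaging steps from degrading the rate, and then the bookkeeping of constants that pins down~$2^{5\alpha+1}$ and~$c_1=2^{-1-2\alpha}c_0$. The remaining ingredients — monotonicity of~$\sigma_N(K)$, the Gram--Schmidt set-up, the dyadic bootstrap, and substituting the decay hypotheses — are routine.
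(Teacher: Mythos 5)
The paper offers no proof of this theorem: it is imported verbatim from the cited reference, and your proposal correctly identifies that the only thing to do is invoke~\cite{devore2013greedy}, which is exactly what the paper does. Your accompanying sketch (lower-triangular Gram--Schmidt matrix with diagonal pinched between~$\gamma\sigma_{i-1}(K)$ and~$\sigma_{i-1}(K)$, the determinant/singular-value comparison against a near-optimal~$m$-dimensional space, and the dyadic bootstrap in the algebraic case) is a faithful outline of the argument in that reference, but since the paper itself proves nothing here, there is nothing further to compare.
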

This theorem implies that weak greedy algorithms preserve the decay rates of the Kolmogorov~$N$-widths, in such a way providing the optimal approximation errors, up to a multiplicative constant.
\par
In our setting, we consider the Hilbert space~$V=\X$ and aim at approximating the set~$K=\mathcal{M}$ of optimal final time adjoint states over the parameter domain~$\params$ as defined in~\cref{equ:definition-solution-manifold}. We will prove in the following that~\Cref{alg:offline-greedy} is indeed a weak greedy algorithm as introduced in~\Cref{alg:general-weak-greedy}, in particular that the resulting reduced spaces~$\X^N$ satisfy relation~\eqref{equ:greedy-choice-condition}. For this reason, according to~\Cref{thm:devore-greedy}, we can transfer convergence properties of the Kolmogorov~$N$-width of~$\mathcal{M}$ to the error decay of the spaces~$\X^N$ constructed by Algorithm 1.
\begin{theorem}[Weak greedy algorithm and approximation error]\label{thm:weak-greedy-and-approximation-error}
    The procedure presented in~\Cref{alg:offline-greedy} is a weak greedy algorithm with the constant~$\gamma$ defined in~\cref{equ:definition-greedy-constant-gamma}. Furthermore, for the resulting reduced space~$\X^N\subset\X$ it holds for all parameters~$\mu\in\params$ the approximation error estimate
    \begin{align}\label{equ:approximation-error-estimate}
        \operatorname{dist}(\X^N,\{\varphi_\mu^*(T)\}) \quad \leq \quad \varepsilon.
    \end{align}
\end{theorem}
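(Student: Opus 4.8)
The plan is to verify the two assertions in turn. For the first --- that~\Cref{alg:offline-greedy} realizes a weak greedy algorithm in the sense of~\Cref{alg:general-weak-greedy} with $V=\X$, $K=\mathcal{M}$, and constant $\gamma$ from~\eqref{equ:definition-greedy-constant-gamma} --- I would show that, whenever $\operatorname{dist}(\X^k,\mathcal{M})>\varepsilon$, the parameter $\mu_{k+1}$ selected in~\Cref{alg:offline-greedy} satisfies the defining inequality~\eqref{equ:greedy-choice-condition}, i.e.~$\operatorname{dist}(\X^k,\{\varphi_{\mu_{k+1}}^*(T)\})\geq\gamma\operatorname{dist}(\X^k,\mathcal{M})$, and that the \texttt{while} loop does not terminate in that situation. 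For the second --- the a posteriori bound~\eqref{equ:approximation-error-estimate} --- I would combine the termination criterion with the same chain of inequalities. The decisive preparatory step is a single estimate that links the quantity $\eta_{\mu_{k+1}}(\tilde{\varphi}_{\mu_{k+1}}^k)$, which is maximized only over the finite training set $\paramstrain$, to the true worst-case distance $\operatorname{dist}(\X^k,\mathcal{M})$ over the whole parameter domain.

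To build that estimate I would proceed in three substeps, for an arbitrary fixed iteration $k$ with current reduced space $\X^k$. (i) Using that $v\mapsto\operatorname{dist}(\X^k,\{v\})=\norm{v-P_{\X^k}(v)}$ is $1$-Lipschitz on $\X$ (reverse triangle inequality for the distance to a subspace), that $\mu\mapsto\varphi_\mu^*(T)$ is Lipschitz with constant $C_{\varphi^*}$, and that $\paramstrain$ is a $\delta$-net of the compact set $\params$, I would show $\max_{\mu\in\paramstrain}\operatorname{dist}(\X^k,\{\varphi_\mu^*(T)\})\geq\operatorname{dist}(\X^k,\mathcal{M})-C_{\varphi^*}\delta$, choosing a maximizer $\mu^\circ$ of the right-hand side (it exists since $\mathcal{M}$ is compact) together with a training parameter within distance $\delta$ of it. (ii) I would invoke~\Cref{thm:error-estimator-reduced-space} together with the uniform bound $\norm{I+M\Gramian}_{\mathcal{L}(\X,\X)}\leq C_\Lambda$ to sandwich, for every $\mu\in\paramstrain$, $\operatorname{dist}(\X^k,\{\varphi_\mu^*(T)\})\leq\eta_\mu(\tilde{\varphi}_\mu^k)\leq C_\Lambda\operatorname{dist}(\X^k,\{\varphi_\mu^*(T)\})$. (iii) Since $\mu_{k+1}$ is a maximizer of $\mu\mapsto\eta_\mu(\tilde{\varphi}_\mu^k)$ over $\paramstrain$, combining (i) and (ii) gives the two-sided chain $\operatorname{dist}(\X^k,\mathcal{M})-C_{\varphi^*}\delta\leq\eta_{\mu_{k+1}}(\tilde{\varphi}_{\mu_{k+1}}^k)\leq C_\Lambda\operatorname{dist}(\X^k,\{\varphi_{\mu_{k+1}}^*(T)\})$, which is the workhorse for the rest of the argument.

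With this in hand I would insert the concrete constants fixed in~\Cref{alg:offline-greedy}, namely $\tilde{\varepsilon}=C_\Lambda\gamma\varepsilon$ and $\delta=\tilde{\varepsilon}/C_\Lambda=\gamma\varepsilon$, and use the identity $1-C_{\varphi^*}\gamma=C_\Lambda\gamma$ that follows at once from~\eqref{equ:definition-greedy-constant-gamma}. If $\operatorname{dist}(\X^k,\mathcal{M})>\varepsilon$, then $\operatorname{dist}(\X^k,\mathcal{M})-C_{\varphi^*}\gamma\varepsilon\geq(1-C_{\varphi^*}\gamma)\operatorname{dist}(\X^k,\mathcal{M})=C_\Lambda\gamma\operatorname{dist}(\X^k,\mathcal{M})$, so the last inequality of the workhorse chain gives exactly~\eqref{equ:greedy-choice-condition}; the first inequality simultaneously yields $\eta_{\mu_{k+1}}(\tilde{\varphi}_{\mu_{k+1}}^k)>C_\Lambda\gamma\varepsilon=\tilde{\varepsilon}$, so the loop does not stop before $\operatorname{dist}(\X^k,\mathcal{M})$ has fallen to at most $\varepsilon$. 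This proves the weak greedy property, and~\Cref{thm:devore-greedy} then transfers the Kolmogorov $N$-width decay of $\mathcal{M}$ to the spaces $\X^N$. For the a posteriori bound I would first note that the procedure terminates --- each accepted step adds a snapshot whose associated error subsequently vanishes by~\Cref{thm:error-estimator-reduced-space}, and $\paramstrain$ is finite, so at most $\lvert\paramstrain\rvert$ iterations occur --- and then read off from the workhorse chain at $k=N$, together with $\eta_{\mu_{N+1}}(\tilde{\varphi}_{\mu_{N+1}}^N)\leq\tilde{\varepsilon}$, that $\operatorname{dist}(\X^N,\mathcal{M})\leq\tilde{\varepsilon}+C_{\varphi^*}\gamma\varepsilon=(C_\Lambda+C_{\varphi^*})\gamma\varepsilon=\varepsilon$; since $\operatorname{dist}(\X^N,\{\varphi_\mu^*(T)\})\leq\operatorname{dist}(\X^N,\mathcal{M})$ for every $\mu\in\params$, this is precisely~\eqref{equ:approximation-error-estimate}.

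I expect the main obstacle to be substep (i): faithfully carrying the greedy error control from the finite training grid $\paramstrain$ back to the continuum $\params$ while giving up no more than the precisely budgeted slack $C_{\varphi^*}\delta$. Everything else is bookkeeping with~\Cref{thm:error-estimator-reduced-space} and the algebra of $C_{\varphi^*}$, $C_\Lambda$, $\gamma$; the delicate point is that the choices $\delta=\gamma\varepsilon$ and $\gamma=(C_{\varphi^*}+C_\Lambda)^{-1}$ are exactly the ones that absorb the $C_{\varphi^*}\delta$ term into the weak greedy constant and, at the same time, make the numerical termination threshold $\tilde{\varepsilon}$ translate back into the target accuracy $\varepsilon$.
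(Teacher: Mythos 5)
Your proposal is correct and follows essentially the same route as the paper: pass from the continuum $\params$ to the training set via the Lipschitz constant $C_{\varphi^*}$ and the $\delta$-net, sandwich the estimator between the true distance and $C_\Lambda$ times it via \Cref{thm:error-estimator-reduced-space}, and let the algebra of $\gamma=(C_{\varphi^*}+C_\Lambda)^{-1}$, $\delta=\gamma\varepsilon$, $\tilde{\varepsilon}=C_\Lambda\gamma\varepsilon$ close both claims. The one substantive difference is where the slack $C_{\varphi^*}\delta$ is absorbed: you bound it by $C_{\varphi^*}\gamma\operatorname{dist}(\X^k,\mathcal{M})$ under the hypothesis $\operatorname{dist}(\X^k,\mathcal{M})>\varepsilon$, whereas the paper bounds it by $(C_{\varphi^*}/C_\Lambda)\,\eta_{\mu_{k+1}}(\tilde{\varphi}_{\mu_{k+1}}^k)$ using the actual loop condition $\eta_{\mu_{k+1}}(\tilde{\varphi}_{\mu_{k+1}}^k)>\tilde{\varepsilon}$. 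As a consequence the paper verifies the selection inequality~\eqref{equ:greedy-choice-condition} at every iteration \Cref{alg:offline-greedy} actually executes, while your argument verifies it only while the true distance still exceeds $\varepsilon$; since the estimator may overestimate, the algorithm can perform further iterations in the window $\gamma\varepsilon<\operatorname{dist}(\X^k,\mathcal{M})\leq\varepsilon$, and for those your chain is silent. This is immediately repaired by conditioning on the loop criterion instead, exactly as the paper does; everything else --- the net argument, the two-sided use of \Cref{thm:error-estimator-reduced-space}, the final computation $(C_\Lambda+C_{\varphi^*})\gamma\varepsilon=\varepsilon$, and your (correct, and not spelled out in the paper) observation that the loop terminates after at most $\lvert\paramstrain\rvert$ iterations because a selected parameter's estimator vanishes afterwards --- checks out.
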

\begin{proof}
    Let~$k\in\{1,\dots,N-1\}$ be an arbitrary iteration of~\Cref{alg:offline-greedy} such that the termination criterion is not fulfilled. Let further be the corresponding reduced basis~$\Phi^k$ and reduced space~$\X^k=\Span{\Phi^k}$ given. We have to prove that it holds
    \begin{align}\label{equ:distance-greedy-inequality}
        \operatorname{dist}(\X^k,\{\varphi_{\mu_{k+1}}^*(T)\}) \geq \gamma\cdot\max\limits_{\mu\in\params}\,\operatorname{dist}(\X^k,\{\varphi_\mu^*(T)\}) = \gamma\cdot\operatorname{dist}(\X^k,\mathcal{M}),
    \end{align}
    with a constant~$\gamma$ independent of~$k$ (see also~\cref{equ:greedy-choice-condition}). To this end, let~$\mu\in\params$ be an arbitrary parameter. Then there exists a parameter~$\tilde{\mu}\in\paramstrain$, such that~$\norm{\mu-\tilde{\mu}}\leq\delta$. Due to the choice of the parameter~$\mu_{k+1}\in\paramstrain$, it holds
    \[
        \eta_{\mu_{k+1}}(\tilde{\varphi}_{\mu_{k+1}}^k) \geq \eta_{\tilde{\mu}}(\tilde{\varphi}_{\tilde{\mu}}^k)
    \]
    for all~$\tilde{\mu}\in\paramstrain$. We then have the estimate
    \begin{align*}
        \operatorname{dist}(\X^k,\{\varphi_\mu^*(T)\}) &\leq \norm{\varphi_\mu^*(T)-\varphi_{\tilde{\mu}}^*(T)} + \operatorname{dist}(\X^k,\{\varphi_{\tilde{\mu}}^*(T)\}) \\
        &\leq C_{\varphi^*}\delta + \operatorname{dist}(\X^k,\{\varphi_{\tilde{\mu}}^*(T)\}) \\
        &\leq C_{\varphi^*}\frac{\tilde{\varepsilon}}{C_\Lambda} + \eta_{\tilde{\mu}}(\tilde{\varphi}_{\tilde{\mu}}^k) \\
        &\leq C_{\varphi^*}\frac{\tilde{\varepsilon}}{C_\Lambda} + \eta_{\mu_{k+1}}(\tilde{\varphi}_{\mu_{k+1}}^k) \\
        &\leq \left(\frac{C_{\varphi^*}}{C_\Lambda}+1\right)\eta_{\mu_{k+1}}(\tilde{\varphi}_{\mu_{k+1}}^k) \\
        &\leq \left(\frac{C_{\varphi^*}}{C_\Lambda}+1\right)C_\Lambda\cdot\operatorname{dist}(\X^k,\{\varphi_{\mu_{k+1}}^*(T)\}) \\
        &= (C_{\varphi^*}+C_\Lambda)\cdot\operatorname{dist}(\X^k,\{\varphi_{\mu_{k+1}}^*(T)\}) \\
        &= \frac{1}{\gamma}\cdot\operatorname{dist}(\X^k,\{\varphi_{\mu_{k+1}}^*(T)\}).
    \end{align*}
    Here, we used that~$\tilde{\varepsilon} < \eta_{\mu_{k+1}}(\tilde{\varphi}_{\mu_{k+1}}^k)$ since the termination criterion is not fulfilled. This proves the inequality in~\eqref{equ:distance-greedy-inequality} by multiplying both sides by~$\gamma>0$, since the parameter~$\mu\in\params$ was arbitrary. Consequently, \Cref{alg:offline-greedy} is a weak greedy algorithm.
    \par
    Regarding the approximation error estimate, we obtain similarly as above that for all~$\mu\in\params$ it holds
    \begin{align*}
        \operatorname{dist}(\X^N,\{\varphi_\mu^*(T)\}) &\leq \frac{C_{\varphi^*}}{C_\Lambda}\tilde{\varepsilon} + \eta_{\mu_{N+1}}(\tilde{\varphi}_{\mu_{N+1}}^N) \\
        &\leq \left(\frac{C_{\varphi^*}}{C_\Lambda}+1\right)\tilde{\varepsilon} \\
        &= \frac{\tilde{\varepsilon}}{C_\Lambda\gamma} \\
        &= \varepsilon,
    \end{align*}
    where we used that~$\eta_{\mu_{N+1}}(\tilde{\varphi}_{\mu_{N+1}}^k) \leq \tilde{\varepsilon}$, i.e.~that the termination criterion is fulfilled by the last computed final time adjoint. This proves the estimate in~\eqref{equ:approximation-error-estimate}.
\end{proof}

Based on the last two results, the introduced~\Cref{alg:offline-greedy} preserves the optimal decay rates, expressed through the Kolmogorov~$N$-widths, of the optimal final time adjoints manifold~$\mathcal{M}$. More precisely, the sequences~$(d_n(\mathcal{M}))_{n\in\setN}$ and~$(\sigma_n(\mathcal{M}))_{n\in\setN}$, defined by~\eqref{d_n} and~\eqref{sigma_n}, respectively, exhibit the same asymptotic behaviour, up to a multiplicative constant. As the solution manifold~$\mathcal{M}$ is usually beyond our disposal, it is hard to estimate its Kolmogorov~$N$-widths directly. However, these can be related to the Kolmogorov widths of the set of admissible parameters~$\params$, which can often be easily determined, even in the case of infinite dimensional parameters (cf.~\cite[\S 4.3]{cohen2015approximation}). More precisely the following result holds~\cite{cohen2016kolmogorov}, under an additional holomorphic assumption.
\begin{theorem}[Cohen and DeVore~\cite{cohen2016kolmogorov}, Theorem 1.1]\label{thm:greedy-hol}
    For a pair of complex Banach spaces~$X$ and~$V$ assume that~$u\colon O\to V$ is a holomorphic map from an open set~$O\subset X$ into~$V$ with uniform bound. If~$K\subset O$ is a compact subset of~$X$ then for any~$\alpha>1$ and~$C_0>0$ it holds
    \[
        d_n(K)\leq C_0 n^{-\alpha} \quad \Longrightarrow \quad d_n(u(K)) \leq C_1 n^{-\beta} \qquad\text{for all }n\in\setN,
    \]
    for any~$\beta<\alpha-1$ and a constant~$C_1$ depending on~$C_0$, $\alpha$ and the mapping~$u$.
\end{theorem}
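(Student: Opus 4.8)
\emph{Sketch of a proof strategy.} The plan is to transfer the approximability of $K$ through the holomorphic map $u$ by combining a telescoping decomposition of the elements of $K$ along near\-optimal subspaces with an anisotropic (sparse) polynomial approximation of $u$, where Cauchy estimates are used to control the Taylor coefficients. I would proceed in three steps, after noting that $u(K)\subset V$ is compact since $u$ is continuous and $K$ is compact.

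\emph{Step 1: a telescoping decomposition.} Since $d_n(K)\leq C_0 n^{-\alpha}$, I would choose for each $n$ a subspace $X_n\subset X$ with $\dim X_n=n$ and $\operatorname{dist}(X_n,K)\leq 2C_0 n^{-\alpha}$; replacing $X_n$ by $X_1+\dots+X_n$ and re\-indexing (a step costing only an arbitrarily small amount in the exponent, which is harmless because $\beta<\alpha-1$ is required to be strict) one may assume $X_1\subset X_2\subset\cdots$. Fix near\-best, possibly nonlinear, projections $P_n\colon X\to X_n$ with $\norm{x-P_nx}\leq\epsilon_n$ for all $x\in K$, where $\epsilon_n\leq C\,n^{-\alpha}$, and put $P_0:=0$. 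Writing $d_n(x):=P_nx-P_{n-1}x\in X_n$, one gets $x=\sum_{n\geq1}d_n(x)$ for $x\in K$ with $\norm{d_n(x)}\leq C\,n^{-\alpha}$; here the hypothesis $\alpha>1$ is crucial, since it yields the summability $\sum_n\norm{d_n(x)}<\infty$ (and, after rescaling, $\sum_n\epsilon_n$ as small as one wishes).

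\emph{Step 2: passing through $u$.} A bounded holomorphic map is locally Lipschitz with a quantitative constant: since $K$ is compact and $O$ open, there is $\rho>0$ such that the complex $\rho$\-neighbourhood of $K$ is contained in $O$, and Cauchy's integral formula on balls of radius $\rho$ gives $\norm{Du}\leq 2M/\rho$ on the complex $\rho/2$\-neighbourhood of $K$, where $M$ is the uniform bound on $u$. Consequently $\norm{u(x)-u(P_mx)}\leq(2M/\rho)\,\norm{x-P_mx}\leq(2M/\rho)\,\epsilon_m$ as soon as $m$ is large enough that $P_mx$ stays in this neighbourhood for all $x\in K$. Hence it suffices to approximate the compact set $\{u(P_mx):x\in K\}\subset V$ by a subspace whose dimension grows only polynomially in $m$, and to balance this against the achieved accuracy.

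\emph{Step 3: anisotropic polynomial approximation, and the obstacle.} I would view $u(P_mx)=u\big(\sum_{n\leq m}d_n(x)\big)$ as a holomorphic function of the block variables $d_1,\dots,d_m$ and expand it in a power series $u(P_mx)=\sum_\nu v_\nu\,\pi_\nu\big(d_1(x),\dots,d_m(x)\big)$, where $\nu$ records the degree in each coordinate of each block (with $\nu^{(n)}$ the part of $\nu$ in block $n$ and $|\nu^{(n)}|$ its order), $\pi_\nu$ are the associated monomials, and $v_\nu\in V$ are fixed coefficient vectors. Choosing admissible polydisc radii $r_n$ of order $n^{-\alpha}$ (so that $\sum_n r_n<\infty$ can be scaled below $\rho$ — again using $\alpha>1$ — while $r_n$ dominates $\norm{d_n}$ up to a fixed factor), Cauchy estimates bound $\norm{v_\nu}$ together with the maximal size of $\pi_\nu$ on the relevant set by $M\prod_n\theta_n^{|\nu^{(n)}|}$ with $\theta_n:=\norm{d_n}/r_n$ uniformly below $1$, and the number of monomials of a given degree inside block $n$ grows only polynomially in $n$. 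Truncating to a finite \emph{lower set} $\Lambda_m$ of multi\-indices, the truncated polynomial takes values in $\Span{v_\nu:\nu\in\Lambda_m}$, of dimension at most $\#\Lambda_m$, and together with Steps~1--2 this yields an approximation of $u(K)$ to accuracy dominated by the truncation tail, with a re\-indexing $m\mapsto N$ finally producing $d_N(u(K))\leq C_1 N^{-\beta}$ for the prescribed $\beta<\alpha-1$. The main obstacle — and the reason for the sharp loss of exactly one in the exponent — is the combinatorial estimate that $\Lambda_m$ can be chosen so that the discarded terms sum to at most a constant times $m^{-(\alpha-1)}$ \emph{while} $\#\Lambda_m$ stays polynomial in $m$; this is a genuine sparse/anisotropic polynomial estimate relying on the fast cross\-block decay $\norm{d_n}\leq C\,n^{-\alpha}$ (so high powers of the tiny high\-index variables are negligible) and on a careful treatment of the finitely many low\-index blocks, whose coordinates contribute a bounded but non\-negligible number of genuine dimensions. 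The remaining technicalities — making the near\-optimal subspaces nested without spoiling the rate, ensuring the Taylor expansions take place on honest complex neighbourhoods on which $u$ is holomorphic and uniformly bounded, and (for the real\-parameter applications in this paper) complexifying $X$, $V$ and $u$ beforehand — are routine by comparison.
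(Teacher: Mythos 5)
First, a point of reference: the paper does not prove this statement at all --- it is imported verbatim from Cohen and DeVore and used as a black box (the text even defers to ``the proof of the last theorem'' in the cited source for the constant~$C_1$). So there is no internal proof to compare against; your sketch has to be judged against the argument in the reference itself. On that score, your three-step architecture (telescoping decomposition along near-optimal subspaces, holomorphic extension to a complex neighbourhood with Cauchy estimates, anisotropic Taylor expansion truncated to a lower set of multi-indices) is indeed the strategy of the actual proof.

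As a proof, however, it has two genuine gaps. The smaller one is in Step~1: replacing $X_n$ by $X_1+\dots+X_n$ produces spaces of dimension of order $n^2$, so after re-indexing by dimension the rate $n^{-\alpha}$ degrades to roughly $N^{-\alpha/2}$. That is a loss of a factor of two in the exponent, not ``an arbitrarily small amount''; the strictness of $\beta<\alpha-1$ absorbs $\varepsilon$-losses but not a halving, so as written this step already destroys the claimed conclusion. The standard repair is to nest only along dyadic indices, taking $Y_j=X_1+X_2+X_4+\dots+X_{2^j}$ of dimension at most $2^{j+1}$, which costs only a constant. The essential gap is in Step~3: the entire content of the theorem --- including the precise loss of exactly one in the exponent --- is the claim that one can choose lower sets $\Lambda_m$ with $\#\Lambda_m$ polynomial in $m$ whose complementary tail $\sum_{\nu\notin\Lambda_m}\norm{v_\nu}\sup\lvert\pi_\nu\rvert$ is $O(m^{-(\alpha-1)})$. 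You correctly single this out as ``the main obstacle'' but then assert it rather than prove it. In Cohen--DeVore this is obtained by showing that the sequence of weighted Taylor coefficients is $\ell^p$-summable for every $p>1/\alpha$ and invoking Stechkin's lemma, which converts $\ell^p$-summability into a best $n$-term rate of $n^{-(1/p-1)}$; letting $p\downarrow 1/\alpha$ yields every $\beta<\alpha-1$. Without that summability argument the sketch establishes nothing quantitative about $d_n(u(K))$, so what you have is an accurate road map of the known proof rather than a proof.
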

The proof of the last theorem also provides an explicit estimate of the constant~$C_1$ in dependence on~$C_0$, $\alpha$ and the mapping~$u$. Combining this result with~\Cref{thm:devore-greedy,thm:weak-greedy-and-approximation-error}, one is able to a priori estimate the number of snapshots, i.e. the dimension of the reduced basis space constructed by the offline part of the greedy algorithm, required to approximate the solution manifold within a given error. However, the price of  such an estimate  is the holomorphic dependence of the solutions on the parameters, which is unlikely to occur in real-world problems. Moreover, such estimates tend to be conservative and are not often used in practice. For this reason we do not explore~\Cref{thm:greedy-hol} and do not require additional smoothness of our parameter mappings, other than the one postulated by~\Cref{as:continuity-parameter-to-system-matrices}. We rather run the offline part of the greedy algorithm and observe the dimension of the reduced basis a posteriori.
\par
Summarizing, we have seen in this section that~\Cref{alg:offline-greedy} is a weak greedy algorithm which enables the construction of reduced spaces with theoretically guaranteed convergence properties and error bounds.

\subsection{Efficient and Reduced Online Computations}\label{subsec:online-computations}
After constructing a reduced basis~$\Phi^N\subset\X$ (where the dimension~$N=|\Phi^N|$ of the reduced space depends on the prescribed error tolerance~$\varepsilon>0$) during the offline phase, the reduced order model can be used during the online phase to replace the costly solution of the original optimal control problem. For a new given parameter~$\mu\in\params$, the approximate final time adjoint~$\tilde{\varphi}_\mu^N\in\X^N=\Span{\Phi^N}$ is computed similarly as described in~\Cref{subsec:greedy-for-reduced-basis}, see in particular~\cref{equ:definition-approximate-adjoint}. For completeness, we state the online procedure also in form of a pseudocode, see~\Cref{alg:online-greedy}.
\begin{algorithm}[H]
    \caption{Online evaluation of the approximate control based on the greedy procedure}\label{alg:online-greedy}
    \begin{algorithmic}[1]
        \Require Parameter~$\mu\in\params$, reduced basis~$\Phi^N=\{\varphi_1,\dots,\varphi_N\}$ constructed by~\Cref{alg:offline-greedy}
        \Ensure Approximate final time adjoint~$\tilde{\varphi}_\mu^N\in\X^N$, approximate optimal control~$\tilde{u}_\mu^N\in G$
        \State compute~$x_i^\mu \gets (I+M\Gramian)\varphi_i$ for~$i=1,\dots,N$ \label{lst:online-compute-final-time-states}
        \State assemble operator~$\bar{\X}_\mu \gets [x_1^\mu\ \cdots\ x_N^\mu]\in\mathcal{L}(\setR^N,\X)$ for projection onto~$\redSpaceTransformed^N$
        \State compute coefficients~$\alpha^\mu=(\alpha_1^\mu,\dots,\alpha_N^\mu)^\top\in\setR^N$ as solution of the~$N \times N$~linear system (see~\cref{equ:projection-onto-parameter-dependent-state-space}) \label{lst:online-solve-system-coefficients}
        \begin{equation}\label{equ:linear-system-for-reduced-coefficients}
            \bar{\X}_\mu^*\bar{\X}_\mu\alpha^\mu=\bar{\X}_\mu^* M\left(e^{A_\mu T}x_\mu^0 - x_\mu^T\right)
            \vspace{.5em}
        \end{equation}
        \State compute final time adjoint~$\tilde{\varphi}_\mu^N \gets \sum_{i=1}^{N} \alpha_i^\mu \varphi_i$\qquad (see~\cref{equ:definition-approximate-adjoint})
        \State solve~$-\dot{\tilde{\varphi}}_\mu(t)=A_\mu^*\tilde{\varphi}_\mu(t)$ for~$t\in[0,T]$,\quad $\tilde{\varphi}_\mu(T)=\tilde{\varphi}_\mu^N$ backwards in time\qquad (see~\cref{fromdualtoprimal}) \label{lst:online-time-dependent-adjoint}
        \State compute associated control~$\tilde{u}_\mu^N(t) \gets -R^{-1}B_\mu^*\tilde{\varphi}_\mu(t)$\qquad (see~\cref{control})\label{lst:online-greedy-control}
        \Return{$\tilde{\varphi}_\mu^N$, $\tilde{u}_\mu^N$}
    \end{algorithmic}
\end{algorithm}

By means of the error estimator~$\eta_\mu$ defined in~\cref{equ:definition-error-estimator}, it is possible to obtain an estimation of the error during the online phase as well. By~\Cref{thm:error-estimator-adjoint}, it holds
\[
    \norm{\varphi_\mu^*(T) - \tilde{\varphi}_\mu^N} \leq \eta_\mu(\tilde{\varphi}_\mu^N).
\]
Additionally, if the reduced basis~$\Phi^N$ is constructed using the greedy algorithm introduced in~\Cref{subsec:greedy-for-reduced-basis} for a tolerance~$\varepsilon>0$, we obtain by combining~\Cref{thm:error-estimator-reduced-space} and~\Cref{thm:weak-greedy-and-approximation-error} the estimate
\[
    \eta_\mu(\tilde{\varphi}_\mu^N) \leq C_\Lambda\varepsilon\qquad\text{for all }\mu\in\params,
\]
and hence it holds
\begin{align}\label{equ:error-bound-online-phase}
    \norm{\varphi_\mu^*(T) - \tilde{\varphi}_\mu^N} \leq C_\Lambda\varepsilon
\end{align}
for all parameters~$\mu\in\params$. We can therefore also bound the error in the online phase given that the reduced space was created with a certain error tolerance.
\par
Since we are typically interested in an approximation of the control~$u_\mu^*$, we also give an error bound for the control. To this end, let us define for~$t\in[0,T]$ the constant
\[
    \zeta_{\mu,t} \coloneqq \norm{R^{-1}B_\mu^*e^{A_\mu^*(T-t)}}_{\mathcal{L}(\X,\U)} < \infty.
\]
Then it holds for all~$t\in[0,T]$ that
\[
    \norm{u_\mu^*(t)-\tilde{u}_\mu^N(t)}_\U \leq \zeta_{\mu,t}\norm{\varphi_\mu^*(T)-\tilde{\varphi}_\mu^N}_\X.
\]
Similar to the bound on the error in the final time adjoint, we can deduce that if the reduced basis~$\Phi^N$ is constructed with a greedy tolerance of~$\varepsilon>0$, the error in the control can be bounded by
\[
    \norm{u_\mu^*(t)-\tilde{u}_\mu^N(t)}_\U \leq \zeta_{\mu,t}C_\Lambda\varepsilon.
\]
The constant~$\zeta_{\mu,t}$ depends in particular on the parameter~$\mu$ and properties of the system matrices, such as stability of the system. Certainly, due to~\Cref{as:continuity-parameter-to-system-matrices} and the compactness of~$\params$, there is also a finite uniform upper bound of the constants~$\zeta_{\mu,t}$ over the parameter set~$\params$.
\par
In our numerical experiments below, we will refer to the reduced order model from~\Cref{alg:online-greedy} as~\RBROM{} (greedy reduced order model).

    \section{Acceleration of the Online Phase using Machine Learning}\label{sec:reduced-order-machine-learning}
The reduced order model introduced in~\Cref{sec:reduced-order-modeling-greedy} already gives a speedup in solving the optimal control problem compared to solving the exact problem from~\cref{equ:optimality-system-main}, see also the numerical experiments in~\Cref{sec:numerical-experiments}. However, the computational costs during the online phase still involve high-dimensional computations. If for instance the state space is finite-dimensional, $\X=\setR^n$, the costs scale with the dimension~$n$. To be more specific, for a new parameter~$\mu\in\params$, one has to compute~$x_i^\mu=(I+M\Gramian)\varphi_i$ for~$i=1,\dots,N$ (with~$N$ denoting the dimension of the reduced space) by solving the system from~\cref{equ:linear-system-for-optimal-final-time-adjoint}, see also~\Cref{rem:applying-gramian-to-vectors}. In order to circumvent these steps, we propose to apply machine learning algorithms to directly approximate the map from the parameters to the reduced coefficients. Training data for the machine learning process can be generated using the~\RBROM{} introduced in~\Cref{sec:reduced-order-modeling-greedy}. The idea is motivated by a similar approach that has been introduced in the context of parametrized PDEs in~\cite{hesthaven2018nonintrusive} and was applied to instationary problems in~\cite{wang2019nonintrusive}. In the following section, we first describe the main ideas of our approach in detail. Afterwards, we discuss how the error estimator introduced in~\Cref{subsec:greedy-for-reduced-basis} can be applied to also evaluate the error of the machine learning approximation. We finally introduce specific machine learning algorithms used in our numerical experiments below in~\Cref{sec:numerical-experiments}.

\subsection{Learning the Map from Parameters to Coefficients}
During the online phase, each evaluation of the reduced model as shown in~\Cref{subsec:online-computations} requires the computation of~$N$ final time states~$x_i^\mu\in \X$ in Line~\ref{lst:online-compute-final-time-states} of~\Cref{alg:online-greedy}. Each of these computations amounts in solving a decoupled system of ordinary differential equations, one forward and one backward, as in~\cref{fromdualtoprimal} and \cref{state}. However, this is only required to setup a (small) linear system of equations to compute the reduced coefficients~$\alpha^\mu\in\setR^N$ in Line~\ref{lst:online-solve-system-coefficients} of~\Cref{alg:online-greedy}. These coefficients determine the projection of the ``target''~$M(e^{A_\mu T}x_\mu^0-x_\mu^T)$ to the space of final states, perturbed by the operator~$-\left((\Gramian)^{-1}+M\right)$, reachable from the reduced space~$\X^N$. Afterwards, in order to compute the approximate control, one only has to solve a backward evolutionary equation (Line~\ref{lst:online-time-dependent-adjoint} in~\Cref{alg:online-greedy}) and apply the operator~$-R^{-1}B^*_{\mu}$. The main computational effort in~\Cref{alg:online-greedy} is spent in Line~\ref{lst:online-compute-final-time-states}. The idea to accelerate the online computations is to replace steps~\ref{lst:online-compute-final-time-states}--\ref{lst:online-solve-system-coefficients} by applying a cheaply to evaluate machine learning surrogate that provides, given a parameter~$\mu\in\params$, an approximation~$\hat{\alpha}^\mu\in\setR^N$ of the reduced coefficients~$\alpha^\mu\in\setR^N$ for the parameter~$\mu$. Similarly to the remaining steps of~\Cref{alg:online-greedy}, this results in an approximation~$\hat{\varphi}_\mu^N=\sum_{i=1}^{N}\hat{\alpha}_i^\mu\varphi_i\in\X^N$ of the final time adjoint and associated control~$\hat{u}_\mu^N\in G$. In other words, we define the mapping~$\pi_N\colon\params\to\setR^N$ for a parameter~$\mu\in\params$ as~$\pi_N(\mu)\coloneqq\alpha^\mu$, where~$\alpha^\mu\in\setR^N$ solves the linear system from~\cref{equ:linear-system-for-reduced-coefficients}. Furthermore, we construct an approximation~$\hat{\pi}_N\colon\params\to\setR^N$ of~$\pi_N$ by training a machine learning algorithm and define the machine learning final time adjoint~$\hat{\varphi}_\mu^N\approx\tilde{\varphi}_\mu^N$ as
\begin{align}\label{equ:definition-approximate-adjoint-ml}
    \hat{\varphi}_\mu^N \coloneqq \sum\limits_{i=1}^{N}\big[\hat{\pi}_N(\mu)\big]_i\varphi_i = \sum\limits_{i=1}^{N}\hat{\alpha}_i^\mu\varphi_i.
\end{align}
In our algorithm we first run the greedy algorithm to construct a reduced basis~$\Phi^N$. Additionally, the greedy algorithm already returns the pairs~$(\mu,\pi_N(\mu))\in\paramstrain\times\setR^N$ for all~$\mu\in\paramstrain$. Any supervised machine learning algorithm that builds an approximation by using training data, consisting of inputs and corresponding outputs of the function to approximate, can be used in our setting and trained on the data set~$D_\mathrm{train}=\{(\mu,\pi_N(\mu)):\mu\in\paramstrain\}$. In particular, the approach is not restricted to the machine learning methods applied in our numerical experiments. Furthermore, the error estimates presented in the next section are also independent of the exact implementation of the machine learning surrogate.
\begin{remark}[Smoothness of the parameter to coefficient map]
    The ability of machine learning algorithms to provide a satisfactory approximation of a function is typically linked to the smoothness of the function. Usually, one can observe that the smoother the function to approximate, the better the machine learning approximation. In our setting, the smoothness of the map~$\pi_N$ depends on the smoothness of the maps~$\mu\mapsto A_\mu$, $\mu\mapsto B_\mu$, $\mu\mapsto x_\mu^0$, and~$\mu\mapsto x_\mu^T$, i.e.~all parameter-dependent parts of the optimal control problem. Due to~\Cref{as:continuity-parameter-to-system-matrices}, we can also expect a smooth parameter to coefficients map~$\pi_N$ that is amenable to approximation by machine learning surrogates.
\end{remark}
For completeness and later reference, we summarize the offline and the online procedure for the machine learning reduced model in the following two pseudocodes:
\begin{algorithm}[H]
    \caption{Offline phase of the machine learning greedy procedure}\label{alg:offline-machine-learning}
    \begin{algorithmic}[1]
        \Require Greedy tolerance~$\varepsilon>0$
        \Ensure Reduced basis~$\Phi^N\subset \X$, approximation~$\hat{\pi}_N$ of the parameter to coefficients map
        \State $\Phi^N,\X^N,D_\mathrm{train} \gets \Call{Greedy}{\varepsilon}$\qquad (see~\Cref{alg:offline-greedy})
        \State train a machine learning algorithm using the data~$D_\mathrm{train}$ to obtain a surrogate~$\hat{\pi}_N$
        \Return{$\Phi^N$, $\hat{\pi}_N$}
    \end{algorithmic}
\end{algorithm}
\begin{algorithm}[H]
    \caption{Online evaluation of the approximate control based on the machine learning greedy procedure}\label{alg:online-machine-learning}
    \begin{algorithmic}[1]
        \Require Parameter~$\mu\in\params$, reduced basis~$\Phi^N$ of size~$N=|\Phi^N|$, machine learning approximation~$\hat{\pi}_N$ of the parameter to coefficients map
        \Ensure Approximate final time adjoint~$\hat{\varphi}_\mu^N\in \X^N$, approximate optimal control~$\hat{u}_\mu^N\in G$
        \State evaluate surrogate~$\hat{\pi}_N$ to obtain approximate coefficients~$\hat{\alpha}^\mu \gets \hat{\pi}_N(\mu)$
        \State compute final time adjoint~$\hat{\varphi}_\mu^N \gets \sum_{i=1}^{N} \hat{\alpha}_i^\mu \varphi_i$\qquad (see~\cref{equ:definition-approximate-adjoint-ml})
        \State solve~$-\dot{\hat{\varphi}}_\mu(t)=A_\mu^*\hat{\varphi}_\mu(t)$ for~$t\in[0,T]$,\quad $\hat{\varphi}_\mu(T)=\hat{\varphi}_\mu^N$ backwards in time\qquad (see~\cref{fromdualtoprimal})\label{lst:online-ml-adjoint-equation}
        \State compute associated control~$\hat{u}_\mu^N(t) \gets -R^{-1}B_\mu^*\hat{\varphi}_\mu(t)$\qquad (see~\cref{control})\label{lst:online-ml-control}
        \Return{$\hat{\varphi}_\mu^N$, $\hat{u}_\mu^N$}
    \end{algorithmic}
\end{algorithm}

\begin{remark}[Choice of training parameters for machine learning]
    It is also possible to add more training samples, i.e.~pairs of the form~$(\mu,\pi_N(\mu))$ for~$\mu\in\params$, to the set of training data for the machine learning algorithm. This additional training data can be generated cheaply by running the online algorithm of the~\RBROM{}, see~\Cref{alg:online-greedy}. In particular, such an enrichment of the training set can be performed adaptively as well, depending on the accuracy of the machine learning prediction. To this end, the a posteriori error estimator introduced in~\Cref{subsec:residual-based-error-estimator} and described in~\Cref{subsec:error-estimation-ml} for the machine learning surrogate might be used to evaluate the machine learning performance, see also~\Cref{rem:adaptive-model-hierarchy}.
\end{remark}
Instead of learning the optimal adjoint datum (or control) directly as functions of parameter (and time), our approach requires learning a mapping between the (typically low-dimensional) spaces~$\params$ and~$\setR^N$ (where usually also $N$ is small). This will allow, as we shall see below, for a significant reduction of the computational costs, while at the same time providing reliable and accurate approximations.
\par
The training data for the machine learning surrogate is generated for free in the offline phase (see~Line~\ref{lst:computation-coefficients-offline-greedy} of~\Cref{alg:offline-greedy}). Solving the exact optimal control problem to generate training data, for instance for learning the controls directly, would be prohibitively expensive both in computational time and in memory. Furthermore, as we will see below, using the greedy algorithm and the reduced basis as an intermediate step results in a priori and a posteriori error estimates for the machine learning results. These might not be available when directly learning the controls or adjoints.

\subsection{A Priori and a Posteriori Error Estimation of Machine Learning Results}\label{subsec:error-estimation-ml}
We can derive a simple a priori bound for the error in the approximation of the optimal final time adjoint that consists of the online greedy error in~\cref{equ:error-bound-online-phase} and the machine learning error in approximating the parameter to coefficients mapping:
\begin{lemma}[Error bound for the machine learning approximation]\label{lem:error-bound-machine-learning}
    Let~$\Phi^N=\{\varphi_1,\dots,\varphi_N\}\subset \X$ be a reduced basis constructed using the greedy algorithm from~\Cref{subsec:greedy-for-reduced-basis} for an error tolerance~$\varepsilon>0$. Further, let~$\bar{\Phi}^N\in\mathcal{L}(\setR^{N},\X)$ be the operator given for the~$i$-th unit vector~$e_i\in\setR^N$ as~$\bar{\Phi}^N e_i=\varphi_i$, $i=1,\dots,N$. Let~$\hat{\pi}_N\colon\params\to\setR^N$ be an approximation of the exact parameter to coefficients map~$\pi_N\colon\params\to\setR^N$. Then it holds
    \[
        \norm{\varphi_\mu^*(T)-\hat{\varphi}_\mu^N} \quad \leq \quad C_\Lambda\varepsilon + \norm{\bar{\Phi}^N}_{\mathcal{L}(\setR^{N},\X)}\norm{\pi_N(\mu)-\hat{\pi}_N(\mu)}
    \]
    for all parameters~$\mu\in\params$.
\end{lemma}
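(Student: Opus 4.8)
The plan is to split the error $\norm{\varphi_\mu^*(T)-\hat{\varphi}_\mu^N}$ via the triangle inequality into a reduced-model part and a pure machine-learning part, inserting the intermediate quantity $\tilde{\varphi}_\mu^N$ (the exact reduced approximation from~\cref{equ:definition-approximate-adjoint}). That is, write
\[
    \norm{\varphi_\mu^*(T)-\hat{\varphi}_\mu^N} \leq \norm{\varphi_\mu^*(T)-\tilde{\varphi}_\mu^N} + \norm{\tilde{\varphi}_\mu^N-\hat{\varphi}_\mu^N}.
\]
For the first summand I would simply invoke the online error bound~\eqref{equ:error-bound-online-phase}, which gives $\norm{\varphi_\mu^*(T)-\tilde{\varphi}_\mu^N}\leq C_\Lambda\varepsilon$ for all $\mu\in\params$, since $\Phi^N$ was constructed by the greedy algorithm with tolerance~$\varepsilon$.

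For the second summand I would use that both $\tilde{\varphi}_\mu^N$ and $\hat{\varphi}_\mu^N$ are linear combinations of the same basis functions, with coefficients $\pi_N(\mu)=\alpha^\mu$ and $\hat{\pi}_N(\mu)=\hat{\alpha}^\mu$ respectively (see~\cref{equ:definition-approximate-adjoint} and~\cref{equ:definition-approximate-adjoint-ml}). Hence
\[
    \tilde{\varphi}_\mu^N-\hat{\varphi}_\mu^N = \sum_{i=1}^{N}\big([\pi_N(\mu)]_i-[\hat{\pi}_N(\mu)]_i\big)\varphi_i = \bar{\Phi}^N\big(\pi_N(\mu)-\hat{\pi}_N(\mu)\big),
\]
using the operator $\bar{\Phi}^N\in\mathcal{L}(\setR^N,\X)$ from the statement. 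Taking norms and applying the operator-norm estimate yields $\norm{\tilde{\varphi}_\mu^N-\hat{\varphi}_\mu^N}\leq\norm{\bar{\Phi}^N}_{\mathcal{L}(\setR^N,\X)}\norm{\pi_N(\mu)-\hat{\pi}_N(\mu)}$. Adding the two bounds gives the claim.

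This proof is almost entirely routine: the only genuine ingredients are the triangle inequality, the definition of $\bar{\Phi}^N$, and the already-established bound~\eqref{equ:error-bound-online-phase}, which itself rests on~\Cref{thm:error-estimator-adjoint,thm:error-estimator-reduced-space,thm:weak-greedy-and-approximation-error}. I do not anticipate any real obstacle; if anything, the mild subtlety is merely bookkeeping — making sure the intermediate term is $\tilde{\varphi}_\mu^N$ (so that~\eqref{equ:error-bound-online-phase} applies verbatim) rather than, say, the best approximation in $\X^N$, and noting that the bound holds uniformly in $\mu$ because~\eqref{equ:error-bound-online-phase} does. One could also remark that $\norm{\bar{\Phi}^N}_{\mathcal{L}(\setR^N,\X)}=1$ if the basis $\Phi^N$ is orthonormalized (e.g.\ via Gram--Schmidt), in which case the second term simplifies to the plain Euclidean error of the machine-learning surrogate; this is worth stating but not needed for the proof.
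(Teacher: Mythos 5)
Your proposal is correct and follows exactly the paper's own argument: the triangle inequality with the intermediate term~$\tilde{\varphi}_\mu^N$, the online bound~\eqref{equ:error-bound-online-phase} for the first summand, and the identity~$\tilde{\varphi}_\mu^N-\hat{\varphi}_\mu^N=\bar{\Phi}^N(\pi_N(\mu)-\hat{\pi}_N(\mu))$ together with the operator-norm estimate for the second. Your closing remark about~$\norm{\bar{\Phi}^N}_{\mathcal{L}(\setR^N,\X)}=1$ for an orthonormalized basis also matches the discussion the paper gives immediately after the lemma.
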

\begin{proof}
    Using the bound in~\cref{equ:error-bound-online-phase} and the definitions of the operator~$\bar{\Phi}^N$ as well as the approximate final time adjoints~$\tilde{\varphi}_\mu^N$ and~$\hat{\varphi}_\mu^N$, it holds for all~$\mu\in\params$ that
    \begin{align*}
        \norm{\varphi_\mu^*(T)-\hat{\varphi}_\mu^N} &\leq \norm{\varphi_\mu^*(T)-\tilde{\varphi}_\mu^N} + \norm{\tilde{\varphi}_\mu^N - \hat{\varphi}_\mu^N} \\
        &\leq C_\Lambda\varepsilon + \norm{\bar{\Phi}^N\alpha^\mu - \bar{\Phi}^N\hat{\alpha}^\mu} \\
        &= C_\Lambda\varepsilon + \norm{\bar{\Phi}^N\big(\pi_N(\mu)-\hat{\pi}_N(\mu)\big)} \\
        &\leq C_\Lambda\varepsilon + \norm{\bar{\Phi}^N}_{\mathcal{L}(\setR^{N},\X)}\norm{\pi_N(\mu)-\hat{\pi}_N(\mu)}.
    \end{align*}
\end{proof}
The first term in the error bound of~\Cref{lem:error-bound-machine-learning} corresponds to the greedy approximation error in the offline phase and can be adjusted by the choice of the greedy tolerance~$\varepsilon$. The second term measures the error of the machine learning in approximating the map~$\pi_N$. If one uses orthonormalization during the greedy procedure, it even holds~$\norm{\bar{\Phi}^N}_{\mathcal{L}(\setR^{N},\X)}=1$, which further simplifies the error estimate.
\par
The error of the approximate final time adjoint~$\hat{\varphi}_\mu^N\in \X^N$ can also be estimated in an a posteriori manner by means of the error estimator~$\eta_\mu$ defined in~\cref{equ:definition-error-estimator}. This results, due to~\Cref{thm:error-estimator-adjoint}, in an efficient and reliable error estimator even for the machine learning results, i.e.~it holds
\[
    \norm{\varphi_\mu^*(T)-\hat{\varphi}_\mu^N} \quad \leq \quad \eta_\mu(\hat{\varphi}_\mu^N) \quad \leq \quad \norm{I+M\Gramian}_{\mathcal{L}(\X,\X)}\norm{\varphi_\mu^*(T)-\hat{\varphi}_\mu^N}
\]
for all~$\mu\in\params$. The error certification and the guarantees obtained by the greedy procedure, see also~\Cref{thm:weak-greedy-and-approximation-error} and~\cref{equ:error-bound-online-phase}, are the main advantages of the presented approach compared to learning the optimal control directly as a function of the parameter.

\begin{remark}[Adaptive and certified surrogate model hierarchy from~\cite{haasdonk2023certified}]\label{rem:adaptive-model-hierarchy}
    Recently, a model hierarchy consisting of a full-order model (similar to the exact solution of the parametrized optimal control problem in our setting), a reduced order model (similar to the~\RBROM{} created by the greedy algorithm in~\Cref{sec:reduced-order-modeling-greedy}), and a machine learning surrogate (similar to the one proposed in this section) was introduced in~\cite{haasdonk2023certified} and further tested with deep kernel models as machine learning surrogates in~\cite{wenzel2023application}. Due to the error estimation described above for the~\RBROM{} as well as the machine learning surrogate, the adaptive model hierarchy is also applicable in the setting of parametrized optimal control problems as considered in this contribution. The reduced model in~\cite{haasdonk2023certified} is also constructed by an adaptive procedure depending on the error estimate but without selecting the training parameters a priorily. Instead, the adaptive model is queried for different parameters and the reduced model (and hence also the machine learning surrogate) is automatically built and updated if necessary, i.e.~if the desired error tolerance is not fulfilled. The adaptive model construction from~\cite{haasdonk2023certified} can therefore be seen as a greedy procedure incorporated into the online phase, and thus shows similarities to the greedy construction of the reduced models considered in this paper.
\end{remark}

\subsection{Machine Learning Approaches}
In this subsection we shortly introduce the machine learning approaches applied in the numerical experiments in~\Cref{sec:numerical-experiments}. As already highlighted above, the whole algorithm is not restricted to these specific choices of methods. We start by defining (deep) neural networks in~\Cref{subsec:deep-neural-networks} and afterwards describe kernel methods in~\Cref{subsec:kernel-methods} and Gaussian process regression in~\Cref{subsec:gaussian-process-regression}. More technical details on the implementation and exact choices of certain parts of the machine learning methods can be found in~\Cref{subsec:implementational-details}. For practical implementations, we assume that the parameter set~$\params$ is finite-dimensional, i.e.~$\params\subset\setR^p$ for some~$p\in\setN$.

\subsubsection{Deep Neural Networks}\label{subsec:deep-neural-networks}
Nowadays, deep neural networks are one of the most popular and widespread machine learning algorithms~\cite{lecun2015deep}. They have also been applied to model order reduction~\cite{hesthaven2018nonintrusive,wang2019nonintrusive,haasdonk2023certified,wenzel2023application} and optimal control~\cite{kmet2011neural}. In this work, we restrict our attention to a standard class of neural networks, so-called \emph{feedforward neural networks}. This class of neural networks applies an alternating sequence of affine transformations and element-wise nonlinear activation functions to the input~\cite{petersen2018optimal}. The function defined by the neural network can be tailored to the training data by adjusting the entries of the matrices and vectors in the affine transformations, the so-called \emph{weights} and \emph{biases} of the neural network.
\par
To be more precise, we describe the notion of a feedforward neural network following a formal definition from~\cite{petersen2018optimal}, see also~\cite[Section~4]{keil2022adaptive}: Let~$L\in\setN$ denote the number of layers of the neural network and~$p=N_0,N_1,\dots,N_{L-1},N_L=N\in\setN$ the numbers of neurons in each layer. Here, we emphasize that the number of neurons in the input layer~$N_0=p$ and the number of neurons in the output layer~$N_L=N$ are chosen such that the corresponding neural network defines a mapping~$\setR^p\to\setR^N$ which is required in our application of approximating the parameter to coefficients map. Furthermore, we consider for~$i=1,\dots,L$ the weight matrices~$W_i\in\setR^{N_i\times N_{i-1}}$ and bias vectors~$b_i\in\setR^{N_i}$ which are collected in the tuple~$\theta=\big((W_1,b_1),\dots,(W_L,b_L)\big)$. To introduce nonlinearity, an activation function~$\rho\colon\setR\to\setR$ is applied component-wise between the affine layers. Let~$\rho_n\colon\setR^n\to\setR^n$ denote the component-wise application of~$\rho$ to~$n$-dimensional vectors. With these ingredients at hand, we define the neural network~$\Phi_\theta\colon\setR^p\to\setR^N$ associated to the weights and biases~$\theta$ for an input~$x\in\setR^p$ as
\[
    \Phi_\theta(x) \coloneqq r_L(x),
\]
where~$r_L\colon\setR^p\to\setR^N$ is defined recursively by
\begin{align*}
    r_L(x) &\coloneqq W_Lr_{L-1}(x)+b_L, \\
    r_i(x) &\coloneqq \rho_{N_i}(W_ir_{i-1}(x)+b_i)\qquad\text{for }i=1,\dots,L-1, \\
    r_0(x) &\coloneqq x.
\end{align*}
\par
For a set of training data~$(x_i,y_i)\in\setR^p\times\setR^N$, $i=1,\dots,n_\mathrm{train}$, the weights and biases~$\theta$ of the neural network are optimized such that the loss function
\[
    \mathcal{L}_\mathrm{dnn}(\theta) \coloneqq \frac{1}{n_\mathrm{train}}\sum\limits_{i=1}^{n_\mathrm{train}}\norm{\Phi_\theta(x_i)-y_i}_2^2
\]
is minimized, i.e.~the mean squared error of the neural network in predicting the outputs~$y_i$ given the inputs~$x_i$ for~$i=1,\dots,n_\mathrm{train}$. Hence, the approximate mapping is chosen as~$\hat{\pi}_N\coloneqq\Phi_{\theta^*}$ with~$\theta^*\in\argmin_{\theta} \mathcal{L}_\mathrm{dnn}(\theta)$. The loss function~$\mathcal{L}_\mathrm{dnn}$ is typically optimized using gradient based methods. In our numerical experiments, we apply the quasi-Newton method L-BFGS, see~\cite{liu1989limited}. The gradient of~$\mathcal{L}_\mathrm{dnn}$ with respect to~$\theta$ can be efficiently computed via backpropagation~\cite{rumelhart1986learning}. To avoid overfitting of the training data, one usually also evaluates the loss function for a validation set, that is chosen distinct from the training set, in each iteration of the optimization algorithm. If the loss on the validation set starts to increase over a couple of consecutive iterations, the optimization is cancelled. This procedure is known as \emph{early stopping}, see~\cite{prechelt1997early} for a discussion of different early stopping approaches and~\cite{molinari2021iterative} for theoretical guarantees in a simplified setting.
\par
The outputs in our application are the coefficients with respect to the reduced basis of final time adjoints. Due to the construction of the reduced basis by the greedy algorithm from~\Cref{subsec:greedy-for-reduced-basis}, the basis functions are sorted by importance (similar to the sorting of singular vectors in a singular value decomposition). Therefore, coefficients associated to different basis functions vary quite heavily in their magnitude. To ensure a sufficiently accurate approximation of all coefficients, we thus apply a simple scaling to the coefficients before training the neural network. The scaling is an affine transformation taking into account the minimum and maximum of the respective coefficient over the training set such that each coefficient is mapped to the interval~$[0,1]$ (on the training set).
\par
A feedforward neural network with~$L\geq 3$ layers is typically called \emph{deep neural network} (DNN). The reduced order model that uses deep neural networks for the coefficient prediction will be called~\DNNROM{} in the remainder of this paper.

\subsubsection{Kernel Methods}\label{subsec:kernel-methods}
Approximations using kernel functions have been applied in the context of surrogate modeling quite successfully in the last years, see for instance~\cite{santin2021kernel,haasdonk2023certified,wenzel2023application}. Very recently, kernel methods were also used for solving optimal control problems, see~\cite{ehring2023hermite}. We refer for instance to~\cite{wendland2005scattered} for a general introduction to kernel methods.
\par
Essentially, these methods build around the notion of (scalar) positive-definite kernels which are mappings~$k\colon\setR^p\times\setR^p\to\setR$ such that the kernel matrix~$[k(x_i,x_j)]_{i,j=1}^n$ is symmetric and positive-definite for all~$n\in\setN$ and distinct~$x_i\in\setR^p$, $i=1,\dots,n$. Associated to every positive-definite kernel~$k$ is a reproducing kernel Hilbert space~$H_{k}$ defined as follows, see the Moore-Aronszajn theorem~\cite{aronszajn1950theory}: Let~$H_{k}^0\coloneqq\Span{\left\{k(\cdot,x):x\in\setR^p\right\}}$, then we can define an inner product~$\langle\cdot,\cdot\rangle_{H_{k}}$ on~$H_{k}^0$ by
\[
    \left\langle\sum_{i=1}^{n}\alpha_i k(\cdot,x_i),\sum_{j=1}^{m}\beta_j k(\cdot,z_j)\right\rangle_{H_{k}} \coloneqq \sum_{i=1}^{n}\sum_{j=1}^{m}\alpha_i\beta_j k(x_i,z_j)
\]
for~$n,m\in\setN$, $\alpha_i,\beta_j\in\setR$ and~$x_i,z_j\in\setR^p$ for~$i=1,\dots,n$ and~$j=1,\dots,m$. The space~$H_{k}$ is now given as the completion of~$H_{k}^0$ with respect to the inner product~$\langle\cdot,\cdot\rangle_{H_{k}}$, i.e.~it holds~$H_{k}=\overline{H_{k}^0}^{\langle\cdot,\cdot\rangle_{H_{k}}}$. Furthermore, the space~$H_{k}$ has the function~$k$ as reproducing kernel, i.e. it holds~$\Phi(x)=\langle\Phi,k(\cdot,x)\rangle_{H_{k}}$ for all~$\Phi\in H_{k}$ and~$x\in\setR^p$. Given a set of data points~$(x_i,y_i)\in\setR^p\times\setR$, $i=1,\dots,n_\mathrm{train}$, kernel methods typically try to minimize a loss function~$\mathcal{L}_\mathrm{kernel}\colon H_{k}\to\setR$ (similar to the loss function~$\mathcal{L}_\mathrm{dnn}$ introduced above for the training of neural networks) of the form
\[
    \mathcal{L}_\mathrm{kernel}(\Phi) \coloneqq \frac{1}{n_\mathrm{train}}\sum\limits_{i=1}^{n_\mathrm{train}}\lvert\Phi(x_i)-y_i\rvert^2 + \lambda\norm{\Phi}_{H_{k}}^2
\]
for~$\Phi\in H_{k}$, where~$\lambda\geq 0$ denotes a regularization parameter and~$\norm{\,\cdot\,}_{H_{k}}\coloneqq\sqrt{\langle\cdot,\cdot\rangle_{H_{k}}}$ is the norm induced by the inner product~$\langle\cdot,\cdot\rangle_{H_{k}}$ on the reproducing kernel Hilbert space. A well-known representer theorem, see for instance~\cite{bohn2019representer}, reveals that there are coefficients~$\alpha_i\in\setR$, $i=1,\dots,n_\mathrm{train}$, such that it holds
\[
    \Phi^* = \sum\limits_{i=1}^{n_\mathrm{train}}\alpha_i k(\cdot,x_i),
\]
where~$\Phi^*=\argmin_{\Phi\in H_{k}}\mathcal{L}_\mathrm{kernel}(\Phi)$ minimizes the loss function. In particular, the minimizer of the loss function can be represented as a linear combination of the kernel~$k$ evaluated only at the data points~$x_i$, $i=1,\dots,n_\mathrm{train}$.
\par
The setting introduced above can be extended to vector-valued outputs~$y_i\in\setR^{N}$ as required in our use case of approximating the coefficients of the reduced representation of the optimal final time adjoint. To this end, one considers matrix-valued kernels~$k_N\colon\setR^p\times\setR^p\to\setR^{N\times N}$ defined as~$k_N=k\cdot I_N$ and vector-valued coefficients~$\alpha_i\in\setR^N$, where~$I_N\in\setR^{N\times N}$ denotes the~$N\times N$~identity matrix. Furthermore, to obtain a surrogate that can be evaluated fast, it is beneficial to obtain a sparse approximation of~$\Phi^*$. To be more precise, one aims to select an appropriately chosen subset~$\Xi\subset\{1,\dots,n_\mathrm{train}\}$ of the training set such that~$\lvert\Xi\rvert\ll n_\mathrm{train}$ and approximate~$\Phi^*$ as
\begin{align}\label{equ:sparse-kernel-approximation}
    \Phi^* \approx \hat{\Phi} \coloneqq \sum\limits_{i\in\Xi}\alpha_i k_N(\cdot,x_i)
\end{align}
with certain coefficients~$\alpha_i\in\setR^N$ for~$i\in\Xi$. The selection of the training inputs used in the approximation, i.e.~of the set~$\Xi$, can for instance be done using a greedy algorithm similar to those described in~\Cref{sec:reduced-order-modeling-greedy}. One very popular example of such an algorithm is the \emph{vectorial kernel orthogonal greedy algorithm} (VKOGA), see~\cite{santin2021kernel,wenzel2021novel}, that is used for our numerical experiments below.
\par
We will refer to the surrogate model using~$\hat{\pi}_N \coloneqq \hat{\Phi}$ built by the VKOGA algorithm as~\VKOGAROM{}.

\subsubsection{Gaussian Process Regression}\label{subsec:gaussian-process-regression}
Applied to regression and probabilistic classification tasks, Gaussian processes are an indispensable tool in the machine learning landscape. In particular in the context of regression problems, thus known as \emph{Gaussian process regression} (GPR), these methods have proven to provide satisfactory results in many applications. Gaussian process regression has recently been applied to reduced order modeling in~\cite{guo2018reduced}, and also to (stochastic) optimal control in~\cite{mayer2019stochastic}. For a general introduction to Gaussian processes in the context of machine learning and in particular to Gaussian process regression, see~\cite{rasmussen2006gaussian}. There are strong connections between kernel methods and Gaussian process regression, see for instance~\cite{kanagawa2018gaussian} for a review concerning this matter. The same topic is also discussed in~\cite[Chapter~2.2]{rasmussen2006gaussian}. Our presentation in this section follows~\cite{rasmussen2006gaussian}.
\par
Gaussian process regression starts from a parametrized model function whose parameters are determined such that given training data are likely to occur as results of the model and that the chosen parameters also have a high probability. To make this precise, let us denote by~$\theta$ the parameters, also known as \emph{weights}, of our model function. A probability distribution is assigned to these parameters that determines how likely certain values of the parameters are to occur in the model. This distribution is called \emph{prior} and denoted as~$P(\theta)$. Furthermore, we collect the inputs of our training set in a matrix~$X\in\setR^{n_\mathrm{train}\times p}$ and the corresponding outputs in a matrix~$Y\in\setR^{n_\mathrm{train}\times N}$. Furthermore, we denote by~$P(Y|X,\theta)$ the so-called \emph{likelihood} which corresponds to the probability of the outputs~$Y$ given the weights~$\theta$. We should emphasize at this point that the output~$Y$ is assumed to be affected by random noise and hence it is given as a random disturbance of the true model with unknown weights~$\theta$. The \emph{posterior} distribution of the weights~$P(\theta|Y,X)$ can now be computed using Bayes' rule as
\[
    P(\theta|Y,X) = \frac{P(Y|X,\theta)P(\theta)}{P(Y|X)}.
\]
Given a new input~$x\in\setR^p$, the prediction of the model is a probability distribution~$P(y|x,X,Y)$ which for a possible output~$y\in\setR^N$ is calculated as
\[
    P(y|x,X,Y) = \int P(y|x,\theta)P(\theta|X,Y)\d{\theta}.
\]
It provides the average of the likelihood for the individual sample~$(x,y)\in\setR^p\times\setR^N$ over the possible parameters weighted by the posterior distribution given the weights~$\theta$. As the final prediction of the model, the mean~$\mathbb{E}_y[P(y|x,X,Y)]$ of the distribution~$P(y|x,X,Y)$ is taken. Additional properties of the distribution~$P(y|x,X,Y)$, such as for instance its variance or higher order moments, can be used to further characterize the result and obtain confidence intervals for the prediction. Typically, the prior is defined by a covariance function in form of a kernel similar to the ones introduced in the previous section on kernel methods. The hyper-parameters of the kernel are fitted during the construction of the surrogate, see~\cite{rasmussen2006gaussian} for more details.
\par
The reduced model based on Gaussian process regression, which considers the approximate mapping from parameter to coefficients defined as~$\hat{\pi}_N(x) \coloneqq \mathbb{E}_y[P(y|x,X,Y)]$, will be called~\GPRROM{} in the following.

    \section{Comparison of Computational Costs}\label{subsec:computational-costs}
In this section we compare the overall computational costs during the offline and the online phase of the three methods considered in this contribution: Solving the linear system from~\cref{equ:linear-system-for-optimal-final-time-adjoint} for the optimal final time adjoint~$\varphi_\mu^*(T)$, computing the approximation~$\tilde{\varphi}_\mu^N$ using~\Cref{alg:online-greedy}, and computing the machine learning surrogate solution~$\hat{\varphi}_\mu^N$ as described in~\Cref{sec:reduced-order-machine-learning}. Additionally, we also consider the computational costs for evaluating the error estimator~$\eta_\mu(p)$ for a given approximate final time adjoint~$p\in X$. The error estimator can be applied to both reduced models provided by~\Cref{alg:online-greedy,alg:online-machine-learning} and might be helpful to certify the results a posteriori (see also~\Cref{rem:adaptive-model-hierarchy}). It should as well be stressed at the beginning of the section that all costs are stated for the finite-dimensional setting, i.e.~we assume here~$n \coloneqq \dim(X) < \infty$.
\par
The overall assumption is that the reduced basis constructed by the greedy algorithm is small compared to the dimension of the state space, i.e.~that it holds~$N\ll n$. In this case, we will see below that the reduced models considered in this contribution can greatly reduce the computational effort compared to the exact solution of the optimal control problem. Due to the error estimates presented in~\Cref{subsec:error-analysis-greedy-algorithm,subsec:online-computations,subsec:error-estimation-ml}, the reduced models can achieve accurate approximations at the same time as well.
\par
Below we first consider general assumptions on the discretization of the optimal control system and the costs for computing certain matrix decompositions that help reducing the overall computational effort. Afterwards, we continue by discussing the online costs and the costs for the a posteriori error estimator, because some of these algorithms appear as steps in the offline procedures as well. Finally, we state the costs for the offline computation of the considered reduced models.

\subsection{Preprocessing Costs}
In the following, we assume that a temporal discretization consisting of~$n_t\in\setN$ time steps is used for each ordinary differential equation that arises in the algorithms. This means that we use a time step size of~$\Delta t=T/n_t$. Furthermore, we use implicit time stepping methods, such as the well-known Crank-Nicolson scheme~\cite{crank1947practical}, which require the solution of linear systems of equations in each time step. Since the system matrices arising in our setting are assumed to be independent of time, all linear systems of equations involve the same fixed matrices in each time step (for a fixed parameter~$\mu\in\params$). We can thus first compute the lower–upper (LU)~decomposition~\cite[Chapter~3.2]{golub1996matrix} of~$A_\mu$ and afterwards solve linear systems involving~$A_\mu$ by forward and backward elimination. Similarly, since the matrix~$R$ is symmetric and positive-definite, it possesses a Cholesky decomposition~\cite[Chapter~4.2.3]{golub1996matrix}, i.e.~we can write~$R=\tilde{R}\tilde{R}^\top$ where~$\tilde{R}\in\setR^{m\times m}$ is a lower triangular matrix. Using the Cholesky decomposition, one can replace the costly solution of a dense linear system by forward and backward elimination, using that the Cholesky factor~$\tilde{R}$ is a lower triangular matrix. Hence, the Cholesky decomposition of~$R$ can be used to speedup the computation of~$u_\mu$, which involves solving multiple linear systems of equations with~$R$ as system matrix (see~\cref{equ:optimality-system-odes}). Certainly, in the case of a parameter-dependent weighting matrix~$R_\mu$, it is generally impossible to precompute the Cholesky factor of the weighting matrix since it changes with the parameter.
\par
Computing the LU~decomposition of~$A_\mu\in\setR^{n\times n}$ requires~$\mathcal{O}(n^3)$ operations in general. For the Cholesky decomposition of~$R\in\setR^{m\times m}$, in total~$\mathcal{O}(m^3)$ operations are needed. The preprocessing costs therefore amount to
\begin{align}\label{equ:preprocessing-cost}
    \mathcal{O}\big(n^3+m^3\big)
\end{align}
operations. To solve an ordinary differential equation of the form as in~\cref{equ:parametrized-control-system} or in~\cref{equ:optimality-system-odes}, we need~$\mathcal{O}\big(n_tn(n+m)\big)$ operations under the assumption that the respective LU~decompositions and Cholesky decompositions have already been computed before. For systems with sparse system matrices, for instance those stemming from finite element discretizations of PDEs, it might nevertheless be beneficial to use iterative solvers instead of LU~decompositions.

\subsection{Costs for Calculating the (approximate) Optimal Control}
In the next three subsections we discuss the costs for computing the exact optimal control, its approximation using~\Cref{alg:online-greedy} and the machine learning based approximation from~\Cref{alg:online-machine-learning}. We already emphasize at this point that the costs stated below will contain the costs for computing the (approximate) optimal adjoint datum as well as the costs for deriving the (approximate) optimal control from the adjoint datum. Furthermore, we note that in typical applications it holds~$n_t\geq n$ and~$m\ll n$.
\subsubsection{Exact Optimal Control}\label{subsubsec:costs-exact-optimal-control}
In order to compute the exact optimal final time adjoint~$\varphi_\mu^*(T)$ (up to a prescribed numerical tolerance), one has to solve the linear system stated in~\Cref{lem:linear-system}. Recall that~$M$ and~$\Gramian$ are symmetric and positive semi-definite matrices. If they commute, also~$I+M\Gramian$ is symmetric and positive semi-definite. Consequently, the linear system of equations in~\eqref{equ:linear-system-for-optimal-final-time-adjoint} can be solved efficiently for~$\varphi_\mu^*(T)$ using the conjugate gradient~(CG) method~\cite[Chapter~10.2]{golub1996matrix}. The~CG~method only requires applications of the (symmetric and positive-definite) system matrix to certain vectors. As described in~\Cref{rem:applying-gramian-to-vectors}, this can be done in our case without explicitly constructing the Gramian matrix~$\Gramian$. However, it is still required to solve two initial value problems (one for the adjoint state and one for the primal state) to compute a single product~$(I+M\Gramian)p$ for a vector~$p\in X$. This step can therefore become costly for large systems and needs to be performed multiple times in each iteration of the~CG~algorithm. We also emphasize that the system matrix~$I+M\Gramian$ depends on the parameter~$\mu\in\params$ and can thus not be efficiently precomputed (even under assumptions such as affine parameter-dependence of~$A_\mu$ and~$B_\mu$, due to the matrix exponential in~\cref{equ:definition-gramian-matrix}). The~CG~algorithm requires to solve the optimality system in~\cref{equ:optimality-system-main} multiple times for different terminal conditions for the adjoint state. The number of iterations of the~CG~algorithm depends on the condition number of the system matrix at hand. To enable a comparison with the reduced models below, let us denote by~$n_\mathrm{CG}\in\setN$, $n_\mathrm{CG}\leq n$, the number of iterations needed in the~CG~algorithm until the norm of the residual drops below a prescribed tolerance (we omit at this point an explicit dependence of the number of iterations on the parameter~$\mu$, but instead assume that the number of iterations is similar for all parameters~$\mu\in\params$). Computation of the optimal control~$u_\mu^*$ for a parameter~$\mu\in\params$ requires
\[
    \mathcal{O}\big(n^3+m^3+n_\mathrm{CG}n_tn(n+m)\big)
\]
operations, where also the preprocessing costs stated in~\cref{equ:preprocessing-cost} are considered.
\begin{remark}
    If~$M\Gramian$ is not symmetric and positive semi-definite (or if no guarantees on it are given a priori), the linear system in~\Cref{lem:linear-system} can be solved by other iterative methods. For instance, gradient descent can be used on the least-squares problem. Again, the computation of the full Gramian is not required, but only its application on vectors. The cost per each iteration is then the cost of the resolution of a forward and a backward dynamical system in dimension~$n$ with~$n_t$ time steps. Moreover, for a given tolerance, the number of iterations needed to achieve that precision is given by the convergence rates of the method. In what follows, for simplicity and due to the clearness of the complexity of the~CG~method, we stick to the case in which the~CG~method can be applied.
\end{remark}
\subsubsection{Reduced Optimal Control based on the Greedy Procedure}\label{subsubsec:costs-greedy-online}
In order to compute the reduced optimal adjoint~$\tilde{\varphi}_\mu^N$ for a given parameter~$\mu\in\params$ according to~\Cref{alg:online-greedy}, we first compute the decompositions of~$R$ and~$A_\mu$ as already described above. Furthermore, Line~\ref{lst:online-compute-final-time-states} in~\Cref{alg:online-greedy} requires the solution of~$2N$ evolution systems (the primal and the adjoint for each vector of the reduced basis), each of the cost~$\mathcal{O}(n_tn(n+m))$. Assembling the matrix~$\bar{X}_\mu^\top\bar{X}_\mu\in\setR^{N\times N}$ requires~$\mathcal{O}(N^2n)$ operations, while solving the corresponding linear system of equations for the reduced coefficients in~\cref{equ:linear-system-for-reduced-coefficients} is of complexity~$\mathcal{O}(N^3)$. Computing the linear combination of the basis functions according to~\cref{equ:definition-approximate-adjoint} requires~$\mathcal{O}(Nn)$ operations and is hence negligible compared to the complexity for setting up the linear system. Altogether, taking into account the preprocessing costs~\eqref{equ:preprocessing-cost}, the costs for computing the reduced optimal control using~\Cref{alg:online-greedy} is of complexity
\[
    \mathcal{O}\big(n^3+m^3+Nn_tn(n+m)+N^2n+N^3\big).
\]
Consequently, the online phase of the greedy reduced order model provides a speedup compared to computing the exact optimal control as long as it holds~$N<n_\mathrm{CG}$.
\subsubsection{Reduced Optimal Control based on the Machine Learning Greedy Procedure}\label{subsubsec:costs-ml-procedure}
Typically, the evaluation of a machine learning surrogate~$\hat{\pi}_N\colon\params\subset\setR^p\to\setR^N$ requires~$\mathcal{O}\big(\chi(p+N)\big)$ operations, where~$\chi\colon\setN\to\setN$ is a polynomial of moderate degree. For instance in the case of deep neural networks, see~\Cref{subsec:deep-neural-networks}, the numbers of neurons in each layer of the network usually scale with~$p+N$ and therefore the computational effort for a forward pass through the neural network is of complexity~$\mathcal{O}\big((p+N)^2\big)$ due to the required matrix-vector multiplications. The constants in the bound depend on the number of layers in the network. Together with the reconstruction step of the approximate final time adjoint from the coefficients, which requires~$\mathcal{O}(Nn)$ operations, see~\cref{equ:definition-approximate-adjoint-ml}, the total costs for the online phase in the case of the machine learning reduced model amount to
\[
    \mathcal{O}\big(n^3+m^3+n_tn(n+m)+\chi(p+N)+Nn\big).
\]
We emphasize at this point that the costs for computing the approximate control using the machine learning surrogate is dominated by the computation of the control for the given approximate final time adjoint, which requires~$\mathcal{O}\big(n^3+m^3+n_tn(n+m)\big)$ operations. The computation of the approximate final time adjoint itself is typically much faster and of complexity~$\mathcal{O}\big(\chi(p+N)+Nn\big)$ when applying the machine learning surrogate.
\par
In comparison to the computational costs for the reduced model based on the greedy procedure, see~\Cref{subsubsec:costs-greedy-online}, the complexity of evaluating the machine learning based reduced model (under the assumptions that~$p\ll n$, $N\ll n$, $m\ll n$, and~$n_t\geq n$) is given by~$\mathcal{O}\big(n_tn(n+m)\big)$ while the reduced model from~\Cref{subsec:online-computations} requires~$O\big(Nn_tn(n+m)\big)$ operations. As we will see in the numerical experiments in~\Cref{sec:numerical-experiments}, this reduction in the computational complexity will result in a serious speedup when using the machine learning procedure.

\subsection{Evaluation of the a Posteriori Error Estimator}\label{subsec:costs-a-posteriori-estimator}
Given a parameter~$\mu\in\params$ and an approximate final time adjoint~$p\in X$, evaluating the error estimate~$\eta_\mu(p)$ defined in~\cref{equ:definition-error-estimator} mainly requires the same steps as evaluating the reduced model in~\Cref{alg:online-greedy}, except that the system in~\cref{equ:optimality-system-main} has to be solved only once and not~$N$~times. Therefore, the dominating costs for the error estimator are of the order
\[
    \mathcal{O}\big(n^3+m^3+n_tn(n+m)\big).
\]
We observe in particular that the evaluation of the error estimator is of the same complexity as computing the reduced optimal control using the machine learning model, see~\Cref{subsubsec:costs-ml-procedure}.
\par
If the approximate final time adjoint~$p\in X$ has been computed using the online procedure of the~\RBROM{} from~\Cref{alg:online-greedy}, i.e.~if it holds~$p=\tilde{\varphi}_\mu^N\in X^N$, we can use that
\begin{align}\label{equ:simplified-computation-perturbed-adjoint}
    (I+M\Gramian)\tilde{\varphi}_\mu^N = \sum\limits_{i=1}^{N}\alpha_i^\mu(I+M\Gramian)\varphi_i = \sum\limits_{i=1}^{N}\alpha_i^\mu x_i^\mu.
\end{align}
Therefore, we can reuse the states~$x_i^\mu$ that have already been computed in~Line~\ref{lst:online-compute-final-time-states} and the right hand side of the system given by~$M\left(e^{A_\mu T}x_\mu^0 - x_\mu^T\right)$ computed in~Line~\ref{lst:online-solve-system-coefficients} of~\Cref{alg:online-greedy} to reduce the computational costs of evaluating the error estimator~$\eta_\mu(\tilde{\varphi}_\mu^N)$ to~$\mathcal{O}\big(Nn\big)$, which is required for computing~$(I+M\Gramian)\tilde{\varphi}_\mu^N$ according to~\eqref{equ:simplified-computation-perturbed-adjoint} and the norm of the residual. All other components of the residual have already been computed in~\Cref{alg:online-greedy}. This simplification can also be used in the implementation of the offline greedy procedure in~\Cref{alg:offline-greedy}.

\subsection{Costs for the Offline Computations}
Subsequently we state the costs for performing the greedy procedure in~\Cref{alg:offline-greedy} and the machine learning greedy procedure from~\Cref{alg:offline-machine-learning}. In the discussion we make use of the results on the costs for the online computations obtained above.
\subsubsection{Greedy Procedure}
If the storage capabilities allow, we can precompute the LU~decomposition of~$A_\mu$ for all parameters~$\mu\in\paramstrain$, which requires~$\mathcal{O}(n_\mathrm{train}n^3)$ operations. Afterwards, the \texttt{while}-loop in~\Cref{alg:offline-greedy} runs for~$N$ iterations (where~$N$ is certainly unknown before executing the algorithm). In each iteration, the exact optimal adjoint datum for one training parameter has to be computed (which needs~$\mathcal{O}(n_\mathrm{CG}n_tn(n+m))$ operations, see~\Cref{subsubsec:costs-exact-optimal-control}). Furthermore, for each of the~$n_\mathrm{train}$ training parameters the reduced optimal adjoint has to be computed, which is of complexity~$\mathcal{O}(Nn_tn(n+m)+N^2n+N^3)$ (since the iteration counter~$k$ is increased up to a value of~$N$), and the a posteriori error estimator is evaluated (which costs only~$\mathcal{O}(n)$ operations, see the discussion in~\Cref{subsec:costs-a-posteriori-estimator}). Altogether, the complexity of running~\Cref{alg:offline-greedy} can be estimated as
\[
    \mathcal{O}\left(n_\mathrm{train}n^3+m^3+N\left(n_\mathrm{CG}n_tn(n+m)+n_\mathrm{train}\big(Nn_tn(n+m)+N^2n+N^3\big)\right)\right).
\]
\subsubsection{Machine Learning Greedy Procedure}
To build a machine learning surrogate using the machine learning greedy procedure from~\Cref{alg:offline-machine-learning}, the greedy procedure in~\Cref{alg:offline-greedy} is performed as the first step. Afterwards, the machine learning algorithm is trained using the collected training data~$D_\mathrm{train}$. If we denote by~$C_\mathrm{train}$ the costs of training the machine learning surrogate (which typically depends on the size~$n_\mathrm{train}$ of the training set, the dimension~$p$ of the parameter space, the reduced dimension~$N$, and the polynomial~$\chi$ introduced in~\Cref{subsubsec:costs-ml-procedure}), the overall costs for~\Cref{alg:offline-machine-learning} are of the complexity
\[
    \mathcal{O}\left(n_\mathrm{train}n^3+m^3+N\left(n_\mathrm{CG}n_tn(n+m)+n_\mathrm{train}\big(Nn_tn(n+m)+N^2n+N^3\big)\right)+C_\mathrm{train}\right).
\]

    \section{Numerical Experiments}\label{sec:numerical-experiments}
In the following section we investigate the proposed algorithms in two numerical examples. We first consider a parametrized version of the heat equation where the parameter influences the conductivity as well as the target state. The control acts on the two boundaries of the one-dimensional computational domain. As a second example, we examine a damped wave equation with boundary control where the parameter changes the wave propagation speed.
\par
Before describing the numerical experiments in detail, we give an overview of some implementational aspects and discuss the software used to perform the test cases.

\subsection{Implementational Details}\label{subsec:implementational-details}
The temporal and spatial discretizations of the considered equations are specified below individually for the respective examples. In this section we only state general details that are the same for both experiments.
\par
As discussed in~\Cref{subsec:optimality-system}, we apply the~CG~algorithm to solve the optimality system for the exact optimal final time adjoint. We choose a tolerance of~$\varepsilon_\mathrm{CG}=10^{-12}$ for the~CG~method in all experiments, i.e.~the algorithm is stopped once the Euclidean norm of the residual drops below~$\varepsilon_\mathrm{CG}$.
\par
All numerical test cases are implemented in the \texttt{Python} programming language. Computations dealing with vectors and matrices, in particular computations of matrix decompositions, use the \texttt{numpy}~\cite{harris2020array} and \texttt{scipy}~\cite{virtanen2020SciPy} packages. The neural networks are implemented using \texttt{PyTorch}~\cite{paszke2019PyTorch} and are adapted from the code used in the model order reduction software \texttt{pyMOR}~\cite{milk2016pyMOR}. Furthermore, we apply the \texttt{VKOGA} library\footnote{The \texttt{VKOGA} library is available at \url{https://github.com/GabrieleSantin/VKOGA}}~\cite{santin2021kernel} for implementing the kernel methods, and use the implementation of Gaussian process regression available in \texttt{scikit-learn}~\cite{scikit-learn}.
\par
We use a fixed neural network architecture throughout the experiments that consists of three hidden layers with~$50$~neurons in each layer, i.e.~$L=4$, $N_0=p$, $N_1=N_2=N_3=50$, and~$N_4=N$. This architecture is chosen sufficiently large for our application and no special adaptation was necessary in our experiments. As activation function we apply the well-known hyperbolic tangent~$\rho=\tanh$. As described in~\Cref{subsec:deep-neural-networks} we use the L-BFGS algorithm implemented in~\texttt{PyTorch} for optimization and perform~$10$~restarts of the training using random initial weights and biases. The neural network obtaining the smallest loss is finally returned by the training procedure and used for the prediction. For validation of the results during the optimization,~$10\%$~of the training parameters are randomly selected as a validation set. The validation data is used to evaluate the early stopping criterion which checks whether the loss decreased within the last~$10$ optimization steps. If that is not the case, the training is stopped.
\par
For the ~\VKOGAROM{} we apply the Gaussian radial basis function kernel~$k\colon\setR^p\times\setR^p\to\setR$ defined for~$x,y\in\setR^p$ as
\[
    k(x,y) \coloneqq \exp\left(-\big(\beta\norm{x-y}_2\big)^2\right),
\]
where the constant~$\beta>0$ is defined for the heat equation experiment in~\Cref{subsec:heat-equation-experiment} as~$\beta=0.1$ and for the wave equation example in~\Cref{subsec:wave-equation-experiment} as~$\beta=1$. Further, in order to select the subset~$\Xi$ of training parameters used for the sparse representation of the kernel interpolant in~\cref{equ:sparse-kernel-approximation}, we use the~$P$-greedy algorithm (see~\cite{santin2021kernel} for more details) with a tolerance of~$\varepsilon_P=10^{-10}$. The regularization parameter~$\lambda$ is set to~$\lambda=0$, i.e.~no additional regularization is incorporated in the loss function.
\par
For the Gaussian process regression surrogate, we apply the prior defined by the covariance function given through the kernel~$k$ defined as
\[
    k(x,y) \coloneqq c\cdot\exp\left(-\frac{\norm{x-y}_2^2}{2l^2}\right)
\]
for inputs~$x,y$ with the hyper-parameters~$c\in[0.1,1000]$ and~$l\in[0.001,1000]$. The optimization process for fitting the hyper-parameters is restarted~$10$~times with different initial guesses to maximize the $\log$-marginal likelihood. To ensure that the matrix appearing during the fitting process is positive-definite, 
we add a constant of~$\alpha=0.001$ to the diagonal of the kernel matrix. Furthermore, the outputs are normalized to zero mean and unit variance which fits to the chosen prior with the same properties.
\par
Regarding the constants~$C_\Lambda$, $C_{\varphi^*}$, and~$\gamma$ introduced in~\Cref{subsec:greedy-for-reduced-basis}, we remark that there typically do not exist sharp and easy to compute estimates for these constants. A naive estimate using submultiplicativity of the matrix norm and the triangle inequality results in pessimistic estimates that are impractical in numerical experiments. We therefore directly prescribe the tolerance~$\tilde{\varepsilon}$ and select an appropriate training set~$\paramstrain\subset\params$ (instead of specifying the error tolerance~$\varepsilon$), which is sufficient to run the greedy procedure from~\Cref{alg:offline-greedy} starting in~Line~\ref{lst:start-greedy-without-tolerance}.
\par
The numerical experiments have been carried out on a dual socket compute server with two Intel(R) Xeon(R) Gold 6254 CPUs running at 3.10GHz and 36~cores in each CPU.
\par
We also provide the source code for our numerical experiments~\cite{sourcecode}, which can be used to reproduce the numerical results stated below\footnote{The corresponding \texttt{GitHub}-repository containing the source code is available at~\url{https://github.com/HenKlei/ML-OPT-CONTROL}}.

\subsection{Test Case 1: Heat Equation}\label{subsec:heat-equation-experiment}
As a first example we consider the heat equation in one spatial dimension with a two-dimensional parameter~$\mu\in\params\coloneqq[1,2]\times[0.5,1.5]\subset\setR^2$. The first component~$\mu_1$ of the parameter~$\mu=[\mu_1,\mu_2]\in\params$ determines the conductivity in the equation whereas the second component~$\mu_2$ determines the target state~$v_\mu^T$. The problem of interest is given as
\begin{alignat*}{2}
    \partial_t v_\mu(t,y) - \mu_1\Delta v_\mu(t,y) &= 0 && \text{for }t\in[0,T],y\in\Omega, \\
    v_\mu(t,0) &= u_{\mu,1}(t) && \text{for }t\in[0,T], \\
    v_\mu(t,1) &= u_{\mu,2}(t) && \text{for }t\in[0,T], \\
    v_\mu(0,y) &= v_\mu^0(y) = \sin(\pi y) \qquad && \text{for }y\in\Omega.
\end{alignat*}
Here, we denote by~$u_\mu(t)=\big[u_{\mu,1}(t),u_{\mu,2}(t)\big]^\top\in\setR^2$ for~$t\in[0,T]$ the (two-dimensional) control that influences the boundary conditions on both ends of the spatial domain~$\Omega=[0,1]$. The target state is given as~$v_\mu^T(y)=\mu_2y$ for~$y\in\Omega$. Furthermore, we consider the dynamics until the final time~$T=0.1$ is reached. The system above is discretized in space using a second-order central finite difference scheme on a uniform spatial grid consisting of~$n_y=100$ inner points with a spatial grid size of~$h=1/(n_y+1)$. In such a way the above system can be written in the form of~\eqref{equ:parametrized-control-system} with the system matrices given as
\begin{align}\label{equ:system-matrices-heat-equation}
    A_\mu = \frac{\mu_1}{h^2}\bar{A}
    \qquad\text{with}\qquad
    \bar{A}=\begin{bmatrix}
        -2 & 1      &        &        &   \\
        1 & -2      & 1      &        &   \\
          &  \ddots & \ddots & \ddots &   \\
          &         & 1      & -2     & 1 \\
          &         &        & 1      & -2
    \end{bmatrix}\in\setR^{n\times n}
    \qquad\text{and}\qquad
    B_\mu = \frac{\mu_1}{h^2}\begin{bmatrix}
        1 & 0 \\
        0 & 0 \\
        \vdots & \vdots \\
        0 & 0 \\
        0 & 1
    \end{bmatrix}\in\setR^{n\times 2},
\end{align}
where the system dimension is~$n=n_y$. This system fulfills the well-known \emph{Kalman rank condition} for all parameters~$\mu\in\params$ and is therefore controllable. Furthermore, the continuity requirements stated in~\Cref{as:continuity-parameter-to-system-matrices} are also obviously fulfilled due to the choice of the system matrices~$A_\mu$ and~$B_\mu$ and the target state~$v_\mu^T$.
\par
For discretization of the time derivative we apply the Crank-Nicolson method, see~\cite{crank1947practical}, for~$n_t=30\cdot n_y$ uniform time steps with a time step size of~$\Delta t=1/n_t$. 
\par
In the context of discretized PDEs, the standard Euclidean inner product might not be well-suited. We therefore choose a weighted version of the Euclidean inner product for the state space that takes into account the spatial discretization size~$h$, see~\cite{haasdonk2011efficient}. To be more precise, we equip the state space~$\X=\setR^n$ with the inner product~$\langle\cdot,\cdot\rangle_h$ defined for~$x,y\in\X$ as
\begin{align}\label{equ:inner-product-state-space-example}
    \langle x,y\rangle_h = h\langle x,y\rangle_2,
\end{align}
where~$\langle\cdot,\cdot\rangle_2$ denotes the standard Euclidean inner product on~$\setR^n$. For the space of controls~$\U=\setR^2$ we use the standard Euclidean inner product. However, on the time-discretized control space, i.e.~the discretized version of the space~$G$, we consider the norm~$\norm{\cdot}_{\Delta t}$ defined for~$u=[u_{i,j}]_{i=1,\dots,n_t;j=1,2}\in\setR^{n_t\times 2}$ as
\begin{align}\label{equ:norm-control-space-example}
    \norm{u}_{\Delta t} \coloneqq \left(\Delta t\sum\limits_{i=1}^{n_t} \norm{u_{i}}_2^2\right)^{\frac{1}{2}},
\end{align}
which is only used to compare optimal and approximate controls. Due to the definition of the cost functional in~\cref{equ:cost-functional}, the inner product~$\langle\cdot,\cdot\rangle_h$ also enters the optimal control problem.
\par
The weighting matrices~$M\in\setR^{n\times n}$ and~$R\in\setR^{2\times 2}$ are chosen as
\[
    M = I\qquad\text{and}\qquad R = \begin{bmatrix}0.125 & 0 \\ 0 & 0.25\end{bmatrix},
\]
which in particular fulfill~\Cref{as:symmetric-product}. The different weights for the different components of the control have the effect that the size of the second component~$u_{\mu,2}$ is penalized stronger than the size of the first one~$u_{\mu,1}$, i.e.~controlling the right boundary value of the state is considered to be more costly than controlling the left boundary value. An example of the optimal final time adjoint as well as the associated controls and states for the parametrized heat equation is shown in~\Cref{fig:heat-equation-example-results} for the parameter~$\mu=(1.5,0.75)\in\params$. We denote the discretized states by~$x_\mu$ to use the same notation as in the previous sections. In the top left plot of the figure we notice that the initial state is driven close to the target state over time with the control acting on the boundary points. This can also be seen in the bottom left plot of the figure where the initial state, target state and optimal state at final time are compared. Since we are solving an optimal control problem where a deviation from the target state is penalized but the system is not forced to hit the target state exactly, we see a slight deviation from the target state in the optimal final time state. Furthermore, we observe in the top right plot of~\Cref{fig:heat-equation-example-results} that the optimal final time adjoint is very smooth, which facilitates the approximation of optimal final time adjoints by lower-dimensional subspaces. The bottom right plot of~\Cref{fig:heat-equation-example-results} shows how the two components of the control change over time. In particular at the final time~$T=0.1$, the first component~$u_{\mu,1}^*$ is close to~$0$ while the second component~$u_{\mu,2}^*$ is close to the value~$0.75$, as expected due to the choice of the target state~$v_\mu^T(y)=\mu_2y$, the parameter value~$\mu_2=0.75$, and because the controls act on the boundary values of the solution.
\par
\begin{figure}[ht]
    \centering
    \begin{tikzpicture}
        \begin{axis}[
            name=state-trajectory,
            anchor=north,
            height=4cm,
            width=.48\textwidth,
            legend cell align={left},
            legend style={font=\footnotesize, fill opacity=1, draw opacity=1, text opacity=1, xshift=5pt, yshift=33pt, draw=white!80!black},
            legend columns=2,
            tick align=outside,
            tick pos=left,
            x grid style={white!69.0196078431373!black},
            xlabel={$x$},
            xlabel style={yshift=12pt},
            xmin=0., xmax=1.,
            xtick style={color=black},
            xtick={0,1},
            xticklabels={0,1},
            scaled y ticks=false,
            y grid style={white!69.0196078431373!black},
            ylabel={$t$},
            ylabel style={yshift=-3pt},
            ytick style={color=black},
            ymin=0., ymax=0.1,
            ytick={0,0.1},
            yticklabels={0,0.1},
            view={0}{90},
            colorbar horizontal,
            colormap/viridis,
            colorbar style={at={(0,-0.5)}, anchor=south west},
        ]
            \addplot3 [surf, mesh/rows=22, shader=interp, z buffer=sort, colormap/viridis] table [x index=1, y index=0] {data/heat_equation/optimal_state_trajectory.txt};
        \end{axis}

        \begin{axis}[
            name=state,
            anchor=north,
            at=(state-trajectory.south),
            yshift=-3cm,
            width=.48\textwidth,
            height=4cm,
            legend cell align={left},
            legend style={font=\footnotesize, fill opacity=1, draw opacity=1, text opacity=1, xshift=5pt, yshift=33pt, draw=white!80!black},
            legend columns=2,
            tick align=outside,
            tick pos=left,
            x grid style={white!69.0196078431373!black},
            xlabel={$x$},
            xlabel style={yshift=12pt},
            xmajorgrids,
            xmin=0, xmax=1,
            xtick style={color=black},
            xtick={0,1},
            xticklabels={0,1},
            scaled y ticks=false,
            y grid style={white!69.0196078431373!black},
            ymajorgrids,
            ylabel={States},
            ylabel style={yshift=-3pt},
            ytick style={color=black}
        ]
            \addplot[thick, color5] table[x index=0, y index=2] {data/heat_equation/final_time_state.txt};
            \addlegendentry{$x_\mu^0$: initial state}
            \addplot[thick, color3] table[x index=0, y index=3] {data/heat_equation/final_time_state.txt};
            \addlegendentry{$x_\mu^T$: target state}
            \addplot[thick, color0, dashed] table[x index=0, y index=1] {data/heat_equation/final_time_state.txt};
            \addlegendentry{$x_\mu^*(T)$: final time state}
        \end{axis}

        \begin{axis}[
            name=control,
            anchor=west,
            at=(state.east),
            xshift=2cm,
            width=.48\textwidth,
            height=4cm,
            legend cell align={left},
            legend style={font=\footnotesize, fill opacity=1, draw opacity=1, text opacity=1, xshift=5pt, yshift=31pt, draw=white!80!black},
            tick align=outside,
            tick pos=left,
            x grid style={white!69.0196078431373!black},
            xlabel={$t$},
            xlabel style={yshift=12pt},
            xmajorgrids,
            xmin=0, xmax=0.1,
            xtick style={color=black},
            xtick={0,0.1},
            xticklabels={0,0.1},
            scaled y ticks = false,
            y grid style={white!69.0196078431373!black},
            ymajorgrids,
            ylabel={Control},
            ylabel style={yshift=-3pt},
            ytick style={color=black}
        ]
            \addplot[thick, color0] table[y index=1] {data/heat_equation/optimal_control.txt};
            \addlegendentry{$u_{\mu,1}^*$: optimal control, first component}
            \addplot[thick, color5] table[y index=2] {data/heat_equation/optimal_control.txt};
            \addlegendentry{$u_{\mu,2}^*$: optimal control, second component}
        \end{axis}

        \begin{axis}[
            name=adjoint,
            anchor=south,
            at=(control.north),
            yshift=2cm,
            width=.48\textwidth,
            height=4cm,
            legend cell align={left},
            legend style={font=\footnotesize, fill opacity=1, draw opacity=1, text opacity=1, xshift=5pt, yshift=20pt, draw=white!80!black},
            tick align=outside,
            tick pos=left,
            x grid style={white!69.0196078431373!black},
            xlabel={$x$},
            xlabel style={yshift=12pt},
            xmajorgrids,
            xmin=0, xmax=1,
            xtick style={color=black},
            xtick={0,1},
            xticklabels={0,1},
            scaled y ticks = false,
            y grid style={white!69.0196078431373!black},
            ymajorgrids,
            ylabel={Adjoint},
            ylabel style={yshift=-3pt},
            ytick style={color=black},
            scaled y ticks=base 10:2,
        ]
            \addplot[thick, color1] table {data/heat_equation/optimal_final_time_adjoint.txt};
            \addlegendentry{$\varphi_\mu^*(T)$: optimal final time adjoint}
        \end{axis}
    \end{tikzpicture}
    \caption{Optimal state~$x_\mu^*$ in a space-time plot (top left), optimal final time adjoint~$\varphi_\mu^*(T)$ (top right), optimal control~$u_\mu^*$ (bottom right) and initial~$x_\mu^0$, final~$x_\mu^*(T)$ and target~$x_\mu^T$ states (bottom left) for the parameter~$\mu=(1.5,0.75)$ in the heat equation example.}
    \label{fig:heat-equation-example-results}
\end{figure}
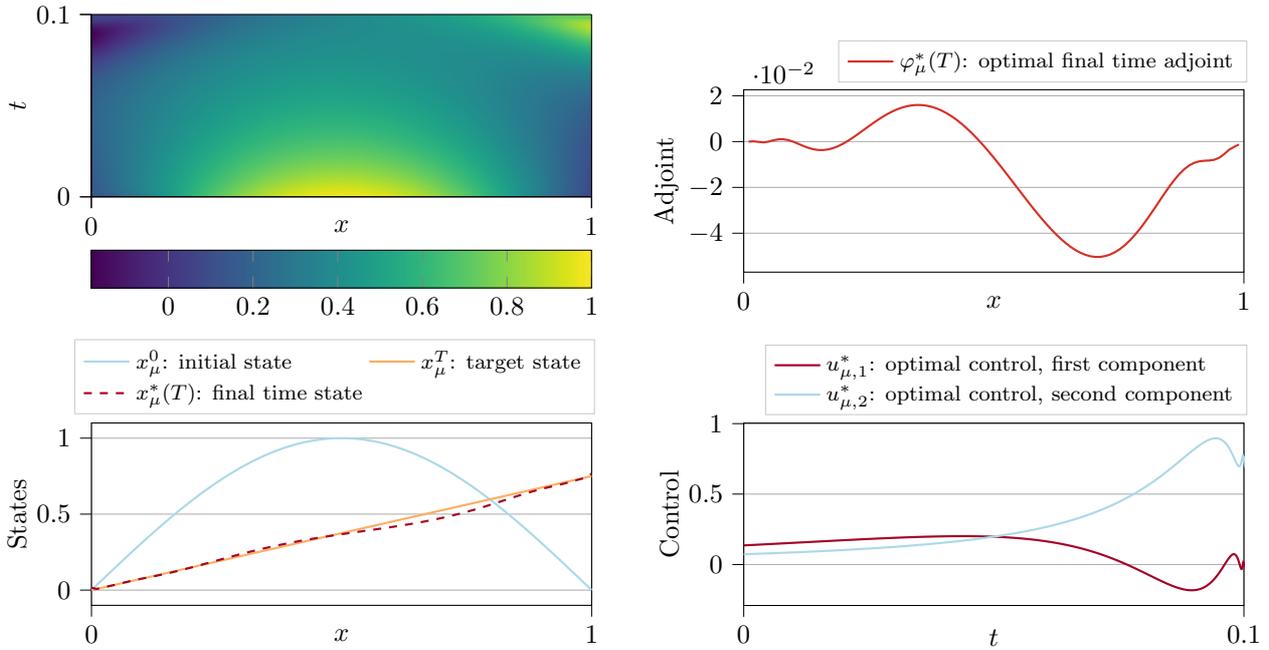
We apply the greedy procedure from~\Cref{subsec:greedy-for-reduced-basis} to construct a reduced basis using~$n_\mathrm{train}=64$ samples on a uniform~$8\times 8$~grid in the parameter space~$\params$. At this point, we emphasize once again that the full optimal control problem is not solved for all parameters in the training set~$\paramstrain$ during~\Cref{alg:offline-greedy}. Instead, the exact solution is only computed for those parameters that are selected by the greedy procedure. It is therefore possible to consider a relatively large training set for the greedy algorithm since the error estimator is rather cheap to evaluate.
\par
The results of the greedy algorithm are depicted in~\Cref{fig:heat-equation-greedy-results}. The greedy tolerance was set to~$\tilde{\varepsilon}=10^{-6}$ which is reached for a reduced basis size of~$N=8$, see~\Cref{fig:heat-equation-greedy-results}. Since no prior knowledge of the constants~$C_\Lambda$ and~$\gamma$ is available for this example, we directly set the tolerance~$\tilde{\varepsilon}$ appearing in the termination criterion in~Line~\ref{lst:offline-termination-criterion} of~\Cref{alg:offline-greedy} instead of the greedy tolerance~$\varepsilon$. We also observe that the overestimation of the true error by the error estimator becomes smaller with a larger reduced basis size and that the estimated maximum error is close to the true maximum error. In~\Cref{fig:heat-equation-singular-values}, the singular values of the optimal final time adjoints associated to the~$n_\mathrm{train}=64$ training parameters are shown in decreasing order. To be more precise, we solved for the optimal final time adjoint for all training parameters, collected these vectors as columns in a matrix, and computed the singular value decomposition of that matrix. Due to equivalences of norms, the respective singular values show (approximately) the order of the decay of the Kolmogorov~$N$-width of the solution manifold~$\mathcal{M}$ of optimal final time adjoints (at least on the training set~$\paramstrain$). One can observe an exponential decay in the singular values until machine precision is reached, see also the blue line marked by~\raisebox{2pt}{\addlegendimageintext{thick, color6}} in the figure. This suggests that the manifold~$\mathcal{M}$ is amenable for approximation by low-dimensional linear subspaces. Hence, the greedy algorithm can be applied in this setting and finds a suitable reduced space according to~\Cref{thm:devore-greedy,thm:weak-greedy-and-approximation-error}.
\par
\begin{figure}[ht]
    \centering
    \begin{minipage}[b]{.49\textwidth}
            \centering
            \begin{tikzpicture}
                \begin{axis}[
                    name=errors,
                    width=.95\textwidth,
                    height=4cm,
                    legend cell align={left},
                    legend style={font=\footnotesize, fill opacity=1, draw opacity=1, text opacity=1, xshift=5pt, yshift=49pt, draw=white!80!black},
                    tick align=outside,
                    tick pos=left,
                    x grid style={white!69.0196078431373!black},
                    xlabel={Greedy step~$k$},
                    xmajorgrids,
                    xmin=-0.1, xmax=8.1,
                    xtick style={color=black},
                    xtick={0,...,8},
                    xticklabels={0,...,8},
                    scaled y ticks = false,
                    y grid style={white!69.0196078431373!black},
                    ymajorgrids,
                    ymin=1e-7, ymax=1e-1,
                    ytick={1e-9,1e-8,1e-7,1e-6,1e-5,1e-4,1e-3,1e-2,1e-1},
                    ymode=log,
                    log basis y={10},
                    ylabel={Greedy errors},
                    ylabel style={yshift=-3pt},
                    ytick style={color=black}
                ]
                    \addplot[very thick, mark=none, color2, dashed, samples=2] coordinates {(-0.1,1e-6) (8.1,1e-6)};
                    \addlegendentry{$\tilde{\varepsilon}=10^{-6}$: greedy tolerance}
                    \addplot[thick, color0, mark=triangle*, mark size=3, mark options={solid, fill opacity=0.5}] table[y index=1] {data/heat_equation/greedy_results.txt};
                    \addlegendentry{$\max_{\mu\in\paramstrain} \eta_\mu(\tilde{\varphi}_\mu^k)$: estimated max.~error}
                    \addplot[thick, color5, mark=*, mark size=3, mark options={solid, fill opacity=0.5}] table[y index=2] {data/heat_equation/greedy_results.txt};
                    \addlegendentry{$\max_{\mu\in\paramstrain} \norm{\varphi_\mu^*(T)-\tilde{\varphi}_\mu^k}_h$: true max.~error}
                \end{axis}
            \end{tikzpicture}
            \caption{Results of the greedy algorithm applied to the heat equation example.}
            \label{fig:heat-equation-greedy-results}
    \end{minipage}%
    \hfill
    \begin{minipage}[b]{.49\textwidth}
            \centering
            \begin{tikzpicture}
                \begin{axis}[
                    name=singular-values,
                    anchor=west,
                    at=(errors.east),
                    xshift=2cm,
                    width=.95\textwidth,
                    height=4cm,
                    legend cell align={left},
                    legend style={font=\footnotesize, fill opacity=1, draw opacity=1, text opacity=1, xshift=5pt, yshift=33pt, draw=white!80!black},
                    tick align=outside,
                    tick pos=left,
                    x grid style={white!69.0196078431373!black},
                    xlabel={Mode number~$N$},
                    xmajorgrids,
                    xmin=-0.1, xmax=65.1,
                    xtick style={color=black},
                    xtick={1,16,32,48,64},
                    xticklabels={1,16,32,48,64},
                    scaled y ticks = false,
                    y grid style={white!69.0196078431373!black},
                    ymajorgrids,
                    ymin=1e-16, ymax=4,
                    ymode=log,
                    ytick={1e-16,1e-12,1e-8,1e-4,1e-0},
                    log basis y={10},
                    ylabel={Singular values},
                    ylabel style={yshift=-3pt},
                    ytick style={color=black},
                    clip mode=individual
                ]
                    \addplot[thick, color1, mark=diamond*, mark size=3, mark options={solid, fill opacity=0.5}] table {data/heat_equation/singular_values_optimal_adjoints.txt};
                    \addlegendentry{Singular values of final time adjoints}
                    \addplot[samples=2, domain=1:24, thick, color6] {10*exp(-1.5*x)};
                    \addlegendentry{$10\cdot\exp(-1.5\cdot N)$}
                \end{axis}
            \end{tikzpicture}
            \caption{Singular value decay of optimal final time adjoints for the heat equation.}
            \label{fig:heat-equation-singular-values}
    \end{minipage}
\end{figure}
Afterwards, the same set of training parameters as for the greedy algorithm has been used for training the different machine learning surrogates. Once again, we can use the relatively large training set consisting of~$n_\mathrm{train}=64$ parameters, because the computation of the training snapshots is already performed during the greedy algorithm and the exact optimal control problem does not have to be solved for all parameters in~$\paramstrain$. All reduced models, the~\RBROM{}, the~\DNNROM{}, the~\VKOGAROM{}, and the~\GPRROM{} have been tested on a set of~$100$ randomly chosen parameters that have not been part of the training set. The results are summarized in~\Cref{tab:heat-equation-results} and presented in more detail in~\Cref{fig:heat-equation-box-plot,fig:heat-equation-errors-parameters}. Here we address two types of errors: the errors in the optimal final time adjoints, i.e.~the deviation of~$\varphi_\mu^*(T)$ from its approximate value, and similarly the errors in the optimal controls. Both, the averaged and maximal errors in the final time adjoint, are well below~$10^{-4}$ for all considered methods. We emphasize at this point that the tolerance~$\tilde{\varepsilon}=10^{-6}$ prescribed for the greedy construction of the reduced basis is only reached by the~\RBROM{}. The approximation of the reduced coefficients by machine learning introduces an additional error that is reflected in a larger error in the final time adjoint. Nevertheless, the machine learning algorithms provide good approximations in particular in terms of the average error. Further, the average errors in the control for all reduced models are of order~$10^{-6}$ or even smaller and therefore the approximate controls are close to the optimal ones. The last two columns of~\Cref{tab:heat-equation-results} show average runtimes of the considered algorithms for computing the (approximate) control and speedups of the reduced models compared to the computation of the exact solution. Here we see a speedup of about~$2$ for the~\RBROM{} while all machine learning surrogates reach speedups of around~$40$. Therefore, the machine learning reduced models outperform the~\RBROM{} in terms of computational efficiency during the online phase. In this example, in particular the~\GPRROM{} constitutes a serious alternative to the~\RBROM{} also with respect to approximation accuracy. We omit a detailed listing of the training times for the machine learning models since they are performed completely offline and amount to a couple of seconds at most. They are negligible compared to the time required for building the reduced basis using the greedy procedure. In~\Cref{fig:heat-equation-errors-parameters} we present the true errors and the estimated errors for the various reduced models over the enumerated test parameter set. The estimated errors are about an order of magnitude larger for the machine learning models compared to the true errors. Accordingly, the efficiency constant of the error estimator~$\eta_\mu$ seems to be relatively moderate, which results in a good error estimate for the machine learning models. It is also visible that the estimator indeed provides a reliable estimate, i.e.~the estimated error is an upper bound for the actual error. For the~\RBROM{} we in particular observe that both, the true and the estimated error in the final time adjoint, are below the prescribed tolerance of~$10^{-6}$ over the whole parameter test set.
\begin{table}[ht]
	\centering
	\resizebox{\columnwidth}{!}{%
		\begin{tabular}{l c c c c d{1.4} d{2.2}}
			\toprule
			{Method} & {Max.~error adjoint} & {Avg.~error adjoint} & {Max.~error control} & {Avg.~error control} & \mc{Avg.~runtime (s)} & \mc{Avg.~speedup} \\ \midrule \midrule
			
			Exact solution & {$-$} & {$-$} & {$-$} & {$-$} & 6.2760 & \mc{$-$} \\
			
			\RBROM{} & {$2.3\cdot 10^{-7}$} & {$5.3\cdot 10^{-8}$} & {$2.3\cdot 10^{-8}$} & {$5.4\cdot 10^{-9}$} & 2.6526 & 2.37 \\
			
            \DNNROM{} & {$2.2\cdot 10^{-5}$} & {$5.8\cdot 10^{-6}$} & {$9.1\cdot 10^{-6}$} & {$2.0\cdot 10^{-6}$} & 0.1623 & 40.33 \\

            \VKOGAROM{} & {$7.0\cdot 10^{-5}$} & {$1.8\cdot 10^{-5}$} & {$2.5\cdot 10^{-5}$} & {$6.9\cdot 10^{-6}$} & 0.1580 & 41.03 \\

            \GPRROM{} & {$1.4\cdot 10^{-5}$} & {$2.2\cdot 10^{-6}$} & {$4.2\cdot 10^{-6}$} & {$7.6\cdot 10^{-7}$} & 0.1572 & 41.40 \\
			\bottomrule
	\end{tabular}}
	\caption{Results of the numerical experiments for the heat equation using the~\RBROM{}, \DNNROM{}, \VKOGAROM{} and~\GPRROM{}.}
	\label{tab:heat-equation-results}
\end{table}

\begin{figure}[ht]
    \begin{minipage}{.48\textwidth}
        \centering
        \begin{tikzpicture}
            \begin{axis}[
                height=6cm,
                boxplot/draw direction=y,
                xtick={1,2,3,4},
                xticklabels={\RBROM{},\DNNROM{},\VKOGAROM{},\GPRROM{}},
                xticklabel style={rotate=-45},
                tick align=outside,
                tick pos=left,
                x grid style={white!69.0196078431373!black},
                xtick style={color=black},
                scaled y ticks=false,
                y grid style={white!69.0196078431373!black},
                ymajorgrids,
                ylabel={Errors in final time adjoint},
                ylabel style={yshift=-3pt},
                ytick style={color=black},
                ymin=1e-9, ymax=1e-4,
                ytick={1e-9,1e-8,1e-7,1e-6,1e-5,1e-4},
                ymode=log,
                log basis y={10},
            ]
                \addplot[color0, boxplot, thick] table [y index=1] {data/heat_equation/analysis_results_errors.txt};
                \addplot[color5, boxplot, thick] table [y index=3] {data/heat_equation/analysis_results_errors.txt};
                \addplot[color6, boxplot, thick] table [y index=5] {data/heat_equation/analysis_results_errors.txt};
                \addplot[color3, boxplot, thick] table [y index=7] {data/heat_equation/analysis_results_errors.txt};
            \end{axis}
        \end{tikzpicture}
        \caption{Box plot showing the statistics of the numerical results for the heat equation using the~\RBROM{}, \DNNROM{}, \VKOGAROM{} and~\GPRROM{}.}
        \label{fig:heat-equation-box-plot}
    \end{minipage}%
    \hfill
    \begin{minipage}{.48\textwidth}
        \centering
        \begin{tikzpicture}
            \begin{axis}[
                height=6cm,
                boxplot/draw direction=y,
                xtick={1,25,50,75,100},
                xticklabels={1,25,50,75,100},
                tick align=outside,
                tick pos=left,
                x grid style={white!69.0196078431373!black},
                xtick style={color=black},
                scaled y ticks=false,
                y grid style={white!69.0196078431373!black},
                ymajorgrids,
                ylabel={Errors in final time adjoint},
                ylabel style={yshift=-3pt},
                ytick style={color=black},
                ymin=1e-9, ymax=1e-3,
                ymode=log,
                ytick={1e-9,1e-8,1e-7,1e-6,1e-5,1e-4,1e-3},
                xlabel={Number of test parameter},
                log basis y={10},
                legend columns=2,
                legend cell align={left},
                legend style={font=\footnotesize, fill opacity=1, draw opacity=1, text opacity=1, yshift=67pt, draw=white!80!black}
            ]
                \addlegendimage{legend image with text={True error}}
                \addlegendentry{}
                \addlegendimage{legend image with text={Estimated error}}
                \addlegendentry{}
                \addplot[color0, thick] table [y index=1] {data/heat_equation/analysis_results_errors.txt};
                \addlegendentry{}
                \addplot[color0, densely dotted, thick] table [y index=2] {data/heat_equation/analysis_results_errors.txt};
                \addlegendentry{\RBROM{}}
                \addplot[color5, thick] table [y index=3] {data/heat_equation/analysis_results_errors.txt};
                \addlegendentry{}
                \addplot[color5, densely dotted, thick] table [y index=4] {data/heat_equation/analysis_results_errors.txt};
                \addlegendentry{\DNNROM{}}
                \addplot[color6, thick] table [y index=5] {data/heat_equation/analysis_results_errors.txt};
                \addlegendentry{}
                \addplot[color6, densely dotted, thick] table [y index=6] {data/heat_equation/analysis_results_errors.txt};
                \addlegendentry{\VKOGAROM{}}
                \addplot[color3, thick] table [y index=7] {data/heat_equation/analysis_results_errors.txt};
                \addlegendentry{}
                \addplot[color3, densely dotted, thick] table [y index=8] {data/heat_equation/analysis_results_errors.txt};
                \addlegendentry{\GPRROM{}}
            \end{axis}
        \end{tikzpicture}
        \caption{True errors and error estimation for the heat equation over the enumerated test parameters in the online phase.}
        \label{fig:heat-equation-errors-parameters}
    \end{minipage}
\end{figure}

\subsection{Test Case 2: Damped Wave Equation}\label{subsec:wave-equation-experiment}
In our second numerical example we consider a parametrized damped wave equation with a Dirichlet control acting on the right boundary of the one-dimensional spatial domain. The problem of interest reads as
\begin{alignat*}{2}
    \partial_{tt} v_\mu(t,y) + \nu\partial_t v_\mu(t,y) - \mu\Delta v_\mu(t,y) &= 0 && \text{for }t\in[0,T],y\in\Omega, \\
    v_\mu(t,0) &= 0 && \text{for }t\in[0,T], \\
    v_\mu(t,1) &= u_{\mu}(t) && \text{for }t\in[0,T], \\
    v_\mu(0,y) &= v_\mu^0(y) = \sin(\pi y) \qquad && \text{for }y\in\Omega, \\
    \partial_t v_\mu(0,y) &= 0 && \text{for }y\in\Omega.
\end{alignat*}
The damping constant is chosen as~$\nu=10$, see the discussion below as well. Further, we choose as final time~$T=1$, the spatial domain~$\Omega=[0,1]$, while the parameter~$\mu$, determining the velocity, ranges within the interval~$\params=[3,10]\subset\setR$. Similarly as in the example of the heat equation we discretize the spatial derivative by means of second-order central finite differences on a grid with~$n_y=100$ inner grid points. For time discretization we apply the Crank-Nicolson scheme with~$n_t=10\cdot n_y$ time steps. We rewrite the discretized damped wave equation as a first order system of dimension~$n=2\cdot n_y=200$. The resulting system matrices are given as
\[
    A_\mu = \begin{bmatrix}[ccc|ccc]
        \hphantom{0}&\hphantom{0}&\hphantom{0}&\hphantom{0}&\hphantom{0}&\hphantom{0} \\
        \multicolumn{3}{c|}{0} & \multicolumn{3}{c}{I} \\
        &&&&& \\
        \cmidrule(lr){1-6}
        &&&&& \\
        \multicolumn{3}{c|}{\frac{\mu}{h^2}\bar{A}} & \multicolumn{3}{c}{-\nu I} \\
        &&&&&
    \end{bmatrix}\in\setR^{n\times n}
    \qquad\text{and}\qquad
    B_\mu = \frac{\mu}{h^2}\begin{bmatrix}
        0 \\
        \vdots \\
        0 \\
        1
    \end{bmatrix}\in\setR^{n\times 1},
\]
where the matrix~$\bar{A}\in\setR^{n_y\times n_y}$ was defined above for the heat equation example in~\cref{equ:system-matrices-heat-equation}, and~$I\in\setR^{n_y\times n_y}$ denotes the identity matrix. Similar to the heat equation example, the discretized system is controllable, i.e.~the Kalman rank condition is fulfilled. Furthermore, the parameter to system matrices mappings are Lipschitz continuous and therefore~\Cref{as:continuity-parameter-to-system-matrices} holds as well.
\par
We prescribe the target~$v_\mu^T(y)=y$ and~$\partial_tv_\mu^T(y)=0$ for all~$y\in\Omega$ and~$\mu\in\params$. Further, we choose the same inner products and norms as in the previous example of the heat equation, see~\cref{equ:inner-product-state-space-example,equ:norm-control-space-example}. In this example we set~$M=10\cdot I\in\setR^{n\times n}$ and~$R=0.1\cdot I\in\setR^{1\times 1}$ for the weighting matrices associated to the different terms in the objective functional. Hence, also~\Cref{as:symmetric-product} is fulfilled in this case.
\par
\Cref{fig:wave-equation-example-results} depicts the optimal final time adjoint, optimal control and corresponding optimal states (as space-time plot and at final time~$T$) for the damped wave equation with parameter~$\mu=5\in\params$. The initial state is driven towards the target state as can be seen in the plots on the left hand side of the figure. As in the heat equation example, the target state is not hit exactly but a deviation is allowed. The velocity component of the states is not shown in the plot since the target and initial velocity are zero and the velocity of the optimal state at final time is of order~$10^{-3}$ or smaller. The top right plot shows the optimal final time adjoint divided into its two components corresponding to the state and the velocity. One can see that the final time adjoint state is quite smooth for both components and that in particular the second component, associated to the velocity, is relatively close to zero. This facilitates a low-dimensional approximation of the final time adjoint. The bottom right plot of~\Cref{fig:wave-equation-example-results} shows the optimal control which exhibits the typical oscillations patterned at the end of the time interval. These oscillations are much more pronounced than the oscillations in the optimal final time adjoint. Hence, it seems advantageous to approximate the adjoint datum instead of the control.
\par
\begin{figure}[ht]
    \centering
    \begin{tikzpicture}
        \begin{axis}[
            name=state-trajectory,
            anchor=north,
            height=4cm,
            width=.48\textwidth,
            legend cell align={left},
            legend style={font=\footnotesize, fill opacity=1, draw opacity=1, text opacity=1, xshift=5pt, yshift=33pt, draw=white!80!black},
            legend columns=2,
            tick align=outside,
            tick pos=left,
            x grid style={white!69.0196078431373!black},
            xlabel={$x$},
            xlabel style={yshift=12pt},
            xmin=0., xmax=0.5,
            xtick style={color=black},
            xtick={0,1},
            xticklabels={0,1},
            scaled y ticks=false,
            y grid style={white!69.0196078431373!black},
            ylabel={$t$},
            ylabel style={yshift=-3pt},
            ytick style={color=black},
            ymin=0., ymax=1.,
            ytick={0,1},
            yticklabels={0,1},
            view={0}{90},
            colorbar horizontal,
            colormap/viridis,
            colorbar style={at={(0,-0.5)}, anchor=south west},
        ]
            \addplot3 [surf, mesh/rows=22, shader=interp, z buffer=sort, colormap/viridis] table [x index=1, y index=0] {data/wave_equation/optimal_state_trajectory.txt};
        \end{axis}

        \begin{axis}[
            name=state,
            anchor=north,
            at=(state-trajectory.south),
            yshift=-3cm,
            width=.48\textwidth,
            height=4cm,
            legend cell align={left},
            legend style={font=\footnotesize, fill opacity=1, draw opacity=1, text opacity=1, xshift=5pt, yshift=33pt, draw=white!80!black},
            legend columns=2,
            tick align=outside,
            tick pos=left,
            x grid style={white!69.0196078431373!black},
            xlabel={$x$},
            xlabel style={yshift=12pt},
            xmajorgrids,
            xmin=0., xmax=0.5,
            xtick style={color=black},
            xtick={0,0.5},
            xticklabels={0,1},
            scaled y ticks=false,
            y grid style={white!69.0196078431373!black},
            ymajorgrids,
            ylabel={States},
            ylabel style={yshift=-3pt},
            ytick style={color=black}
        ]
            \addplot[thick, color5] table[y index=2, restrict x to domain=0:0.5] {data/wave_equation/final_time_state.txt};
            \addlegendentry{$x_\mu^0$: initial state}
            \addplot[thick, color3] table[y index=3, restrict x to domain=0:0.5] {data/wave_equation/final_time_state.txt};
            \addlegendentry{$x_\mu^T$: target state}
            \addplot[thick, color0, dashed] table[y index=1, restrict x to domain=0:0.5] {data/wave_equation/final_time_state.txt};
            \addlegendentry{$x_\mu^*(T)$: final time state}
        \end{axis}

        \begin{axis}[
            name=control,
            anchor=west,
            at=(state.east),
            xshift=2cm,
            width=.48\textwidth,
            height=4cm,
            legend cell align={left},
            legend style={font=\footnotesize, fill opacity=1, draw opacity=1, text opacity=1, xshift=5pt, yshift=20pt, draw=white!80!black},
            tick align=outside,
            tick pos=left,
            x grid style={white!69.0196078431373!black},
            xlabel={$t$},
            xlabel style={yshift=12pt},
            xmajorgrids,
            xmin=0., xmax=1.,
            xtick style={color=black},
            xtick={0,1},
            xticklabels={0,1},
            scaled y ticks = false,
            y grid style={white!69.0196078431373!black},
            ymajorgrids,
            ylabel style={yshift=-3pt},
            ylabel={Control},
            ytick style={color=black},
        ]
            \addplot[thick, color0] table[y index=1] {data/wave_equation/optimal_control.txt};
            \addlegendentry{$u_{\mu}^*$: optimal control}
        \end{axis}

        \begin{axis}[
            name=adjoint,
            anchor=south,
            at=(control.north),
            yshift=1.7cm,
            width=.48\textwidth,
            height=4cm,
            legend cell align={left},
            legend style={font=\footnotesize, fill opacity=1, draw opacity=1, text opacity=1, xshift=5pt, yshift=35pt, draw=white!80!black},
            tick align=outside,
            tick pos=left,
            x grid style={white!69.0196078431373!black},
            xlabel={$x$},
            xlabel style={yshift=12pt},
            xmajorgrids,
            xmin=0., xmax=0.5,
            xtick style={color=black},
            xtick={0,0.5},
            xticklabels={0,1},
            scaled y ticks = false,
            y grid style={white!69.0196078431373!black},
            ymajorgrids,
            ylabel={Adjoint},
            ylabel style={yshift=-3pt},
            ytick style={color=black},
        ]
            \addplot[thick, color1] table[restrict x to domain=0:0.5] {data/wave_equation/optimal_final_time_adjoint.txt};
            \addlegendentry{$\varphi_\mu^*(T)$: optimal final time adjoint, first component}
            \addplot[thick, color5] table[restrict x to domain=0:0.5, x expr=\thisrow{x}-0.5] {data/wave_equation/optimal_final_time_adjoint.txt};
            \addlegendentry{$\varphi_\mu^*(T)$: optimal final time adjoint, second component}
        \end{axis}
    \end{tikzpicture}
    \caption{Optimal state~$x_\mu^*$ in a space-time plot (top left), optimal final time adjoint~$\varphi_\mu^*(T)$ with the first component associated to the state and second component associated to the velocity (top right), optimal control~$u_\mu^*$ (bottom right) and initial~$x_\mu^0$, final~$x_\mu^*(T)$ and target~$x_\mu^T$ states (bottom left) for the parameter~$\mu=5$ in the damped wave equation example. The velocities are not shown in the plots of the states on the left-hand side, as they are all close to being zero. For the optimal final time adjoint, both components are plotted since these have to be approximated by the reduced space.}
    \label{fig:wave-equation-example-results}
\end{figure}
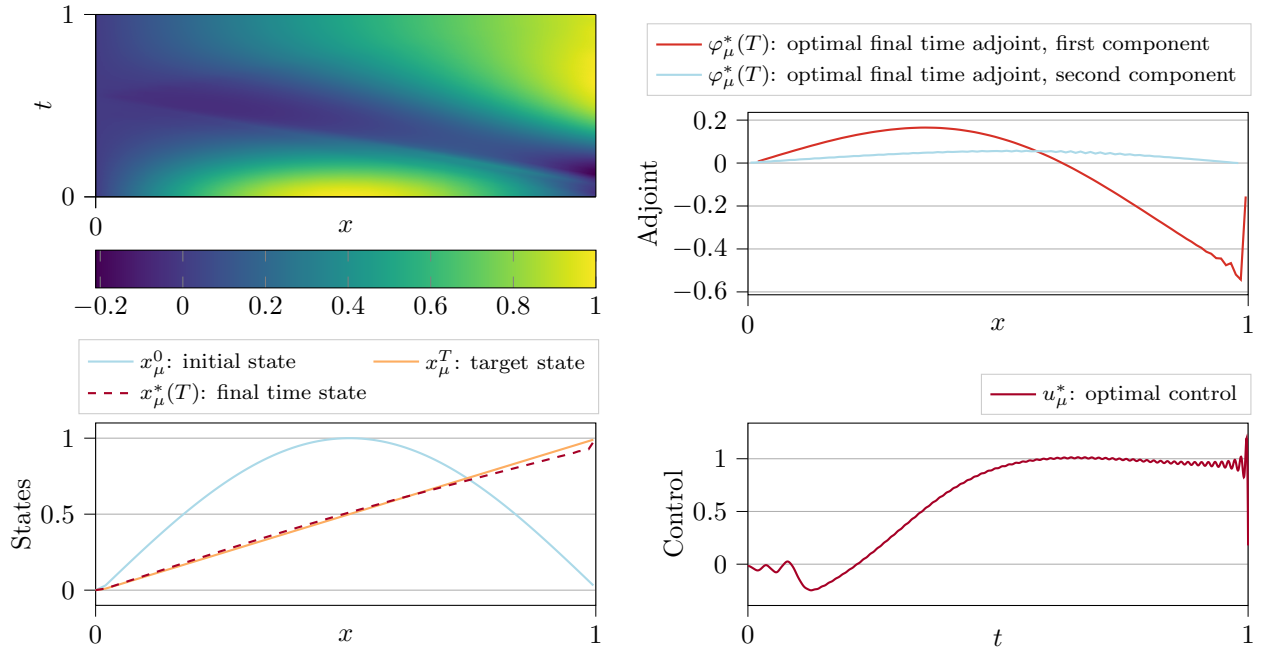
The plots in~\Cref{fig:wave-equation-example-results} depict the solution for a single parameter value~$\mu=5$. In order to deal with the full range of parameter dependent problems, we apply the greedy algorithm~\cref{alg:offline-greedy} with a training set consisting of~$n_\mathrm{train}=50$ training samples from a uniform grid on the parameter domain~$\params$. The greedy tolerance is set to~$\tilde{\varepsilon}=10^{-2}$. Similar to the previous example we have no prior knowledge of the constants~$C_\Lambda$ and~$\gamma$ and therefore directly select~$\tilde{\varepsilon}$ instead of~$\varepsilon$.
\par
\Cref{fig:wave-equation-greedy-results} shows the estimated maximum and true maximum errors over the greedy steps until the greedy tolerance~$\tilde{\varepsilon}$ is reached for a reduced basis size of~$N=18$. The parameters selected by the greedy algorithm are also shown in~\Cref{fig:wave-equation-errors-parameters} (marked as crosses~\raisebox{.24ex}{\addlegendimageintext{mark=x, only marks, color1}}) and are distributed almost uniformly across the parameter range with a slight clustering at the smaller parameter values. One can observe in~\Cref{fig:wave-equation-greedy-results} that the estimator is more pessimistic in this example than in the heat equation problem and overestimates the true error by about two orders of magnitude. Nevertheless, the error estimator still provides a reliable estimate of the error, i.e.~the estimated error is an upper bound for the true error. However, the greedy procedure is not stopped with a reduced size of~$5$ although the true error would have been already below the prescribed tolerance for such a basis size. This behavior reveals the fact that the constant~$C_\Lambda$, which determines the efficiency of the error estimator, is larger for this example. This is to be expected, as the dissipation rate is weaker in this case than it was for the heat equation example.
\par
In order to explore how the dissipation rate influences the problem, we analyze its dependence on the damping constant~$\nu$. \Cref{fig:wave-equation-singular-values} shows the singular values for the optimal final time adjoints over the training set for different values of the damping constant~$\nu$. The singular value decay is rather slow in the case of the undamped wave equation, i.e.~for~$\nu=0$, see also~\cite{greif2019decay}. The same behavior emerges for a small value of the damping constant like~$\nu=5$, which means that also the decay of the Kolmogorov~$N$-width of the manifold of optimal final time adjoints is slow in these cases. Thus, a relatively large reduced space would be necessary to obtain a sufficiently accurate reduced model. In that regard, the restriction to linear subspaces (constructed by the greedy algorithm in this case) can be seen as a limitation of the general methodology since it might not be applicable to all control systems. The introduction of a larger damping constant such as~$\nu=10$ or even~$\nu=100$, however, leads to a faster decay of the singular values (see the curves marked as~\addlegendimageintext{thick, color1, mark=diamond*, mark size=3, mark options={solid, fill opacity=0.5}} and~\addlegendimageintext{thick, color4, mark=triangle*, mark size=3, mark options={solid, fill opacity=0.5}} in~\Cref{fig:wave-equation-singular-values}) and thus makes the solution manifold amenable for approximation by linear subspaces of small dimension. For a large damping constant of~$\nu=100$ we observe an exponential decay of the singular values (similar to the heat equation example) up to a reduced basis size of around~$30$. For a moderate damping constant of~$\nu=10$, the decay is also exponential at least for the first couple of modes. We should nevertheless mention that also in the case of the damped wave equation with~$\nu=10$, the decay of the singular values slows down quite rapidly after about~$7$ modes. Hence, to reach very small approximation errors, still a large reduced basis would be required. Since we are not interested in an example where the dissipativity, resulting from a large damping constant, dominates the solution behavior, we choose a value of~$\nu=10$ and the larger greedy tolerance of~$\tilde{\varepsilon}=10^{-2}$.
\par
\begin{figure}
    \centering
    \begin{minipage}[b]{.49\textwidth}
        \centering
        \begin{tikzpicture}
            \begin{axis}[
                name=errors,
                width=.95\textwidth,
                height=6cm,
                legend cell align={left},
                legend style={font=\footnotesize, fill opacity=1, draw opacity=1, text opacity=1, xshift=5pt, yshift=49pt, draw=white!80!black},
                tick align=outside,
                tick pos=left,
                x grid style={white!69.0196078431373!black},
                xlabel={Greedy step~$k$},
                xmajorgrids,
                xmin=-0.1, xmax=18.1,
                xtick style={color=black},
                xtick={0,...,18},
                xticklabels={0,...,18},
                scaled y ticks = false,
                y grid style={white!69.0196078431373!black},
                ymajorgrids,
                ymin=1e-4, ymax=1e-0,
                ytick={1e-4,1e-3,1e-2,1e-1,1e-0},
                ymode=log,
                log basis y={10},
                ylabel={Greedy errors},
                ylabel style={yshift=-3pt},
                ytick style={color=black}
            ]
                \addplot[very thick, mark=none, color2, dashed, samples=2] coordinates {(-0.1,1e-2) (18.1,1e-2)};
                \addlegendentry{$\tilde{\varepsilon}=10^{-2}$ greedy tolerance}
                \addplot[thick, color0, mark=triangle*, mark size=3, mark options={solid, fill opacity=0.5}] table[y index=1] {data/wave_equation/greedy_results.txt};
                \addlegendentry{$\max_{\mu\in\paramstrain} \eta_\mu(\tilde{\varphi}_\mu^k)$: estimated max.~error}
                \addplot[thick, color5, mark=*, mark size=3, mark options={solid, fill opacity=0.5}] table[y index=2] {data/wave_equation/greedy_results.txt};
                \addlegendentry{$\max_{\mu\in\paramstrain} \norm{\varphi_\mu^*(T)-\tilde{\varphi}_\mu^k}$: true max.~error}
            \end{axis}
        \end{tikzpicture}
        \caption{Results of the greedy algorithm applied to the damped wave equation example.}
        \label{fig:wave-equation-greedy-results}
    \end{minipage}%
    \hfill
    \begin{minipage}[b]{.49\textwidth}
        \begin{tikzpicture}
            \begin{axis}[
                name=singular-values,
                anchor=west,
                at=(errors.east),
                xshift=2cm,
                width=.95\textwidth,
                height=6cm,
                legend cell align={left},
                legend style={font=\footnotesize, fill opacity=1, draw opacity=1, text opacity=1, xshift=5pt, yshift=57pt, draw=white!80!black},
                tick align=outside,
                tick pos=left,
                x grid style={white!69.0196078431373!black},
                xlabel={Mode number~$N$},
                xmajorgrids,
                xmin=-0.1, xmax=51.1,
                xtick style={color=black},
                xtick={1,10,20,...,50},
                xticklabels={1,10,20,...,50},
                scaled y ticks = false,
                y grid style={white!69.0196078431373!black},
                ymajorgrids,
                ymin=1e-10, ymax=1e2,
                ytick={1e2, 1e-1, 1e-4, 1e-7, 1e-10},
                ymode=log,
                log basis y={10},
                ylabel={Singular values},
                ylabel style={yshift=-3pt},
                ytick style={color=black},
                clip mode=individual
            ]
                \addplot[thick, color6, mark=*, mark size=3, mark options={solid, fill opacity=0.5}] table[y index=1] {data/wave_equation/singular_values_optimal_adjoints_comparison.txt};
                \addlegendentry{undamped wave equation with~$\nu=0$}
                \addplot[thick, color5, mark=square*, mark size=3, mark options={solid, fill opacity=0.5}] table[y index=3] {data/wave_equation/singular_values_optimal_adjoints_comparison.txt};
                \addlegendentry{damped wave equation with~$\nu=5$}
                \addplot[thick, color1, mark=diamond*, mark size=3, mark options={solid, fill opacity=0.5}] table[y index=4] {data/wave_equation/singular_values_optimal_adjoints_comparison.txt};
                \addlegendentry{damped wave equation with~$\nu=10$}
                \addplot[thick, color4, mark=triangle*, mark size=3, mark options={solid, fill opacity=0.5}] table[y index=6] {data/wave_equation/singular_values_optimal_adjoints_comparison.txt};
                \addlegendentry{damped wave equation with~$\nu=100$}
            \end{axis}
        \end{tikzpicture}
        \caption{Singular value decay of optimal final time adjoints for the damped wave equation.}
        \label{fig:wave-equation-singular-values}
    \end{minipage}
\end{figure}
After constructing a reduced basis using the greedy procedure from~\Cref{subsec:greedy-for-reduced-basis}, the machine learning reduced models described in~\Cref{sec:reduced-order-machine-learning} are trained on the same~$n_\mathrm{train}=50$ training parameters that were already used in the greedy algorithm. For testing purposes, we draw~$100$ random values from the parameter range~$\params$ that were not contained in the training set and evaluate the performance of the reduced models on the test parameter set. The results of the online phase are presented in~\Cref{tab:wave-equation-results}. Furthermore, the error statistics for the reduced models are summarized in~\Cref{fig:wave-equation-box-plot}, whereas the errors of the reduced models with respect to the test parameters are shown in detail in~\Cref{fig:wave-equation-errors-parameters}. First of all, we observe an enormous speedup reached by all machine learning models compared to the exact solution of the optimal control and also compared to the~\RBROM{}. The average speedup of the latter model is about~$21$, while the average speedup of the three considered machine learning surrogates is around~$730$. In total, the average runtime could be reduced from about~$229$ seconds for the exact solution to roughly~$0.32$ seconds using the combined machine learning/reduced basis surrogates. In terms of approximation accuracy, we see that the average error in the final time adjoint is about one order of magnitude larger for the machine learning reduced models compared to the~\RBROM{}. For the control we observe that the~\DNNROM{}, the~\VKOGAROM{}, and the~\GPRROM{} all reach average errors of the order~$10^{-4}$. The box plot in~\Cref{fig:wave-equation-box-plot} and also the true errors in~\Cref{fig:wave-equation-errors-parameters} reveal that the~\VKOGAROM{} and the~\GPRROM{} actually suffer from several outliers, in particular at the boundaries of the parameter domain, which limit their average behavior. However, on most parts of the parameter domain the error of those two methods is close to the error of the~\RBROM{}. The~\DNNROM{} on the other hand produces errors that are about one order of magnitude larger on the whole parameter domain but with less outliers. It might be possible to reduce these outliers and further improve the performance of the machine learning models by tuning their hyper-parameters. However, such an optimization of the machine learning models is beyond the scope of this article and not the focus of this work. To adaptively adjust for the outliers, we remark that the error estimator also captures the outliers well and might therefore help to detect areas in the parameter domain where additional training data should be generated to improve the machine learning models. At this point, we also would like to advert to the relatively large overestimation of the error that can be observed in~\Cref{fig:wave-equation-errors-parameters} and was already present in~\Cref{fig:wave-equation-greedy-results}. Also for the most accurate reduced model, i.e.~the~\RBROM{}, the estimated error is about two orders of magnitude larger than the true error. Nevertheless, the curves of estimated errors follow the true errors quite closely in terms of error variation with respect to the parameter, although being two orders of magnitude larger.
\begin{table}[ht]
	\centering
	\resizebox{\columnwidth}{!}{%
		\begin{tabular}{l c c c c d{3.4} d{3.2}}
			\toprule
			{Method} & {Max.~error adjoint} & {Avg.~error adjoint} & {Max.~error control} & {Avg.~error control}& \mc{Avg.~runtime (s)} & \mc{Avg.~speedup} \\ \midrule \midrule
			
			Exact solution & {$-$} & {$-$} & {$-$} & {$-$} & 228.8106 & \mc{$-$} \\
			
			\RBROM{} & {$3.0\cdot 10^{-4}$} & {$4.7\cdot 10^{-5}$} & {$1.3\cdot 10^{-4}$} & {$2.3\cdot 10^{-5}$} & 10.8503 & 21.10 \\
			
            \DNNROM{} & {$3.8\cdot 10^{-3}$} & {$3.8\cdot 10^{-4}$} & {$4.6\cdot 10^{-3}$} & {$7.0\cdot 10^{-4}$} & 0.3230 & 731.48 \\

            \VKOGAROM{} & {$2.0\cdot 10^{-2}$} & {$5.7\cdot 10^{-4}$} & {$5.7\cdot 10^{-3}$} & {$2.0\cdot 10^{-4}$} & 0.3226 & 733.80 \\

            \GPRROM{} & {$8.9\cdot 10^{-3}$} & {$3.9\cdot 10^{-4}$} & {$1.1\cdot 10^{-2}$} & {$5.3\cdot 10^{-4}$} & 0.3359 & 708.55 \\
			\bottomrule
	\end{tabular}}
	\caption{Results of the numerical experiments for the damped wave equation using the~\RBROM{}, \DNNROM{}, \VKOGAROM{} and~\GPRROM{}.}
	\label{tab:wave-equation-results}
\end{table}

\begin{figure}[ht]
    \begin{minipage}{.47\textwidth}
        \centering
        \begin{tikzpicture}
            \begin{axis}[
                height=6cm,
                boxplot/draw direction=y,
                xtick={1,2,3,4},
                xticklabels={\RBROM{},\DNNROM{},\VKOGAROM{},\GPRROM{}},
                xticklabel style={rotate=-45},
                tick align=outside,
                tick pos=left,
                x grid style={white!69.0196078431373!black},
                xtick style={color=black},
                scaled y ticks=false,
                y grid style={white!69.0196078431373!black},
                ymajorgrids,
                ylabel={Errors in final time adjoint},
                ylabel style={yshift=-3pt},
                ytick style={color=black},
                ymin=1e-6, ymax=2e-2,
                ymode=log,
                log basis y={10},
            ]
                \addplot[color0, boxplot, thick] table [y index=2] {data/wave_equation/analysis_results_errors.txt};
                \addplot[color5, boxplot, thick] table [y index=4] {data/wave_equation/analysis_results_errors.txt};
                \addplot[color6, boxplot, thick] table [y index=6] {data/wave_equation/analysis_results_errors.txt};
                \addplot[color3, boxplot, thick] table [y index=8] {data/wave_equation/analysis_results_errors.txt};
            \end{axis}
        \end{tikzpicture}
        \caption{Box plot showing the statistics of the numerical results for the damped wave equation using the~\RBROM{}, \DNNROM{}, \VKOGAROM{} and~\GPRROM{}.}
        \label{fig:wave-equation-box-plot}
    \end{minipage}%
    \hfill
    \begin{minipage}{.47\textwidth}
        \centering
        \begin{tikzpicture}
            \begin{axis}[
                height=6cm,
                boxplot/draw direction=y,
                xtick={1,25,50,75,100},
                xticklabels={1,25,50,75,100},
                tick align=outside,
                tick pos=left,
                x grid style={white!69.0196078431373!black},
                xtick style={color=black},
                scaled y ticks=false,
                y grid style={white!69.0196078431373!black},
                ymajorgrids,
                ylabel={Errors in final time adjoint},
                ylabel style={yshift=-3pt},
                ytick style={color=black},
                ymin=1e-6, ymax=25,
                ymode=log,
                ytick={1e-6,1e-5,1e-4,1e-3,1e-2,1e-1,1e-0,1e+1},
                xlabel={$\mu$},
                xtick={3,...,10},
                xticklabels={3,...,10},
                log basis y={10},
                legend columns=2,
                legend cell align={left},
                legend style={font=\footnotesize, fill opacity=1, draw opacity=1, text opacity=1, yshift=75pt, draw=white!80!black}
            ]
                \addlegendimage{legend image with text={True error}}
                \addlegendentry{}
                \addlegendimage{legend image with text={Estimated error}}
                \addlegendentry{}
                \addplot[color0, thick] table [x index=1, y index=2] {data/wave_equation/analysis_results_errors.txt};
                \addlegendentry{}
                \addplot[color0, densely dotted, thick] table [x index=1, y index=3] {data/wave_equation/analysis_results_errors.txt};
                \addlegendentry{\RBROM{}}
                \addplot[color5, thick] table [x index=1, y index=4] {data/wave_equation/analysis_results_errors.txt};
                \addlegendentry{}
                \addplot[color5, densely dotted, thick] table [x index=1, y index=5] {data/wave_equation/analysis_results_errors.txt};
                \addlegendentry{\DNNROM{}}
                \addplot[color6, thick] table [x index=1, y index=6] {data/wave_equation/analysis_results_errors.txt};
                \addlegendentry{}
                \addplot[color6, densely dotted, thick] table [x index=1, y index=7] {data/wave_equation/analysis_results_errors.txt};
                \addlegendentry{\VKOGAROM{}}
                \addplot[color3, thick] table [x index=1, y index=8] {data/wave_equation/analysis_results_errors.txt};
                \addlegendentry{}
                \addplot[color3, densely dotted, thick] table [x index=1, y index=9] {data/wave_equation/analysis_results_errors.txt};
                \addlegendentry{\GPRROM{}}
                \addlegendimage{legend image with text={}}
                \addlegendentry{}
                \addplot[mark=x, only marks, color1] table [x index=0, y expr=2e-6] {data/wave_equation/greedy_selected_parameters.txt};
                \addlegendentry{Selected parameters}
            \end{axis}
        \end{tikzpicture}
        \caption{True errors and error estimation for the damped wave equation over the test parameters in the online phase. The parameter values selected during the greedy algorithm are shown as crosses.}
        \label{fig:wave-equation-errors-parameters}
    \end{minipage}
\end{figure}

    \section{Conclusion and Outlook}\label{sec:conclusion-outlook}
In this contribution we considered parameterized optimal control problems where the objective functional penalizes a (weighted) deviation from a target state and a large control energy. To solve such optimal control problems fast for several values of the parameter, for instance in a real-time application or many-query scenario, we first extended the greedy control procedure, previously developed in the controllability context, either the exact~\cite{lazar2016greedy}, or the approximate one~\cite{lazar2023greedy}, to this setting. In contrast to the penalisation approach applied in this work,  the desired target state is considered as a constraint in the cited papers. However, both approaches allow for an application of the Hilbert uniqueness method. In particular, the optimal control, i.e.~the one that minimizes the objective functional, can be completely described by the optimal final state of the adjoint problem. For this reason, the  greedy algorithm builds a reduced basis for the manifold of optimal final time adjoint states over the parameter set, and not for the manifold of optimal controls which is usually more complex and difficult to describe.
\par
Secondly, we applied machine learning approaches, such as deep neural networks, kernel methods, and Gaussian process regression, to learn the reduced coefficients as a function of the parameter. We derived error bounds for the greedy approximation and proved that the proposed algorithm is indeed a weak greedy algorithm. Furthermore, we also showed how to bound the error of the machine learning with respect to the greedy error and the error in approximating the coefficients. A comparison of the computational costs reveals the enormous potential in reducing the computational effort by applying machine learning in our scenario. The numerical experiments exhibit that the machine learning surrogates can accurately approximate the reduced coefficients while providing a massive speedup compared to the exact solution of the optimal control problem. In addition, due to a posteriori error estimates we derived, it is possible to obtain a reliable bound on the error of the machine learning results in a cheap manner without computing the exact solution.
\par
The numerical experiments in this paper are run for control systems whose underlying dynamics (governed by the heat and the damped wave operator) are dissipative, which is crucial for the efficiency of the greedy algorithm. Namely, a conservative system supports large, non-linear variations of optimal final time adjoints with respect to the parameter~\cite{greif2019decay}, which requires a relatively big reduced basis to obtain a sufficiently accurate reduced model (cf.~the discussion on the damping constant in~\Cref{subsec:wave-equation-experiment}). In this way, the developed methods can be similarly (and successfully) applied to other dissipative systems, such as advection-diffusion-reaction ones. Although our examples are based on PDEs in one space dimension, the same procedure can be applied in a higher dimensions cases as well. Here we do not expect difficulties, apart from those conditioned by the efficiency of standard numerical methods that have to be employed in the offline phase.
\par
As discussed extensively in~\Cref{sec:reduced-order-machine-learning}, many different machine learning algorithms can be integrate in the approach. The only requirement on the machine learning surrogate is that it approximates vector-valued functions and can be trained in a supervised manner, i.e.~by providing samples of the function to approximate. A theoretical investigation of the machine learning models in terms of approximation quality of the parameter to solution map could be a topic of future research. For instance for the kernel methods applied above, theoretical results on the approximation properties and bounds for the errors exist (e.g.~\cite{santin2017convergence}) that allow for a rigorous a priori error bound that involves only the $P$-greedy tolerance~$\varepsilon_P$ and the norm of the parameter to solution map in the reproducing kernel Hilbert space (under the assumption that this mapping is indeed contained in the reproducing kernel Hilbert space).
\par
To further speedup the computations of the reduced models during the online phase (in particular Lines~\ref{lst:online-compute-final-time-states}, \ref{lst:online-time-dependent-adjoint}, and~\ref{lst:online-greedy-control} in~\Cref{alg:online-greedy} and Lines~\ref{lst:online-ml-adjoint-equation} and~\ref{lst:online-ml-control} in~\Cref{alg:online-machine-learning}), an additional hyper-reduction, that accelerates solving the equation for the time-dependent adjoint variable, may be applied. Similarly, the error estimator could be approximated by replacing the system in~\cref{equ:optimality-system-odes} by a reduced system. To guarantee a reliable and efficient error estimator, theoretical investigations of such an approach would be indispensable. However, we should mention that replacing the optimality system in~\cref{equ:optimality-system-main} by a surrogate would also speedup the exact computation of the optimal final time adjoint. Several approaches considering parametrized model order reduction of control systems have been suggested in the last decades, see~\cite{benner2015survey} for a survey of methods.
\par
An application of the adaptive model hierarchy from~\cite{haasdonk2023certified} as described in~\Cref{rem:adaptive-model-hierarchy} to the parametrized optimal control setting could be an interesting extension and combination of the approaches. The error estimators and reduced models developed in this paper would serve as the main ingredients to an adaptive model hierarchy with certified results for parametrized optimal control problems.
\par
Finally, the approaches discussed in this paper can be extended to other classes of optimal control problems. As an example, one might  explore a generalization of the method to linear time-variant systems with time-dependent parameters, objective functionals that are not necessarily quadratic with respect to the state and the control, or to problems with additional constraints such as bounds on the control variables. Instead of a target state, it may also be of interest to include an output quantity and an objective functional measuring a deviation from a target output. Similarly, one could penalise a deviation of the whole trajectory from some desired one. This would require the study of the coupled optimality system, and the optimal feedback controls obtained by solving the corresponding Riccati equations should be explored. In such a setting, the greedy approach should probably be accompanied by POD, which is common when constructing a reduced basis for a manifold of time-dependent functions~\cite{hesthaven2016certified}.

    \printbibliography

@article{cohen2016kolmogorov,
    title   = {Kolmogorov widths under holomorphic mappings},
    journal = {IMA Journal of Numerical Analysis},
    volume  = {36},
    number  = {1},
    pages   = {1-12},
    year    = {2016},
    doi     = {10.1093/imanum/dru066},
    author  = {Albert Cohen and Ronald DeVore}
}

@article{cohen2015approximation,
    author  = {Albert Cohen and Ronald DeVore},
    title   = {Approximation of high-dimensional parametric PDEs},
    journal = {Acta Numerica},
    volume  = {24},
    number  = {1},
    pages   = {1-159},
    year    = {2015},
    doi     = {10.1017/S0962492915000033}
}

@article{dalsanto2020data,
    title = {Data driven approximation of parametrized PDEs by reduced basis and neural networks},
    author = {Dal Santo, Niccolo and Deparis, Simone and Pegolotti, Luca},
    publisher = {ACADEMIC PRESS INC ELSEVIER SCIENCE},
    journal = {Journal Of Computational Physics},
    address = {San Diego},
    volume = {416},
    pages = {109550},
    year = {2020},
    doi = {10.1016/j.jcp.2020.109550}
}

@inproceedings{molinari2021iterative,
  title={Iterative regularization for convex regularizers},
  author={Molinari, Cesare and Massias, Mathurin and Rosasco, Lorenzo and Villa, Silvia},
  booktitle={International conference on artificial intelligence and statistics},
  pages={1684--1692},
  year={2021},
  editor = 	 {Banerjee, Arindam and Fukumizu, Kenji},
  volume = 	 {130},
  series = 	 {Proceedings of Machine Learning Research},
  publisher =    {PMLR},
}

@book{peypouquet2015convex,
  title={Convex optimization in normed spaces: theory, methods and examples},
  doi = {10.1007/978-3-319-13710-0},
  year = {2015},
  publisher = {Springer International Publishing},
  author = {Juan Peypouquet}
}

@incollection{ballarin2022spacetime,
    title = {Chapter 9 - Space-time POD-Galerkin approach for parametric flow control},
    editor = {Emmanuel Trélat and Enrique Zuazua},
    series = {Handbook of Numerical Analysis},
    publisher = {Elsevier},
    volume = {23},
    pages = {307-338},
    year = {2022},
    booktitle = {Numerical Control: Part A},
    issn = {1570-8659},
    doi = {10.1016/bs.hna.2021.12.009},
    author = {Francesco Ballarin and Gianluigi Rozza and Maria Strazzullo}
}

@article{daniel2020model,
	Author = {Daniel, Thomas and Casenave, Fabien and Akkari, Nissrine and Ryckelynck, David},
	Journal = {Advanced Modeling and Simulation in Engineering Sciences},
	Number = {1},
	Pages = {16},
	Title = {Model order reduction assisted by deep neural networks ({ROM}-net)},
	Volume = {7},
	Year = {2020},
    doi = {10.1186/s40323-020-00153-6},
}

@article{lazar2023greedy,
    title = {Greedy search of optimal approximate solutions},
    journal = {Pure and Applied Functional Analysis},
    volume = {8},
    number = {2},
    pages = {547-564},
    year = {2023},
    author = {Martin Lazar and Enrique Zuazua}
}

@incollection{lazar2022control,
    title = {Chapter 8 - Control of parameter dependent systems},
    editor = {Emmanuel Trélat and Enrique Zuazua},
    series = {Handbook of Numerical Analysis},
    publisher = {Elsevier},
    volume = {23},
    pages = {265-306},
    year = {2022},
    booktitle = {Numerical Control: Part A},
    issn = {1570-8659},
    doi = {10.1016/bs.hna.2021.12.008},
    author = {Martin Lazar and Jérôme Lohéac}
}

@article{lazar2016greedy,
    title = {Greedy controllability of finite dimensional linear systems},
    journal = {Automatica},
    volume = {74},
    pages = {327-340},
    year = {2016},
    doi = {10.1016/j.automatica.2016.08.010},
    author = {Martin Lazar and Enrique Zuazua},
    keywords = {Parametrised ODEs and PDEs, Greedy control, Weak-greedy, Heat equation, Wave equation, Finite-differences},
    abstract = {We analyse the problem of controllability for parameter dependent linear finite-dimensional systems. The goal is to identify the most distinguished realisations of those parameters so to better describe or approximate the whole range of controls. We adapt recent results on greedy and weak greedy algorithms for parameter dependent PDEs or, more generally, abstract equations in Banach spaces. Our results lead to optimal approximation procedures that, in particular, perform better than simply sampling the parameter-space to compute the controls for each of the parameter values. We apply these results for the approximate control of finite-difference approximations of the heat and the wave equation. The numerical experiments confirm the efficiency of the methods and show that the number of weak-greedy samplings that are required is particularly low when dealing with heat-like equations, because of the intrinsic dissipativity that the model introduces for high frequencies.}
}

@article{hesthaven2018nonintrusive,
    AUTHOR = {Hesthaven, Jan S. and Ubbiali, Stefano},
    TITLE = {Non-intrusive reduced order modeling of nonlinear problems using neural networks},
    JOURNAL = {Journal of Computational Physics},
    VOLUME = {363},
    YEAR = {2018},
    PAGES = {55--78},
    MRCLASS = {65N30 (65N22)},
    MRNUMBER = {3784416},
    DOI = {10.1016/j.jcp.2018.02.037},
}

@article{wang2019nonintrusive,
    author   = {Wang, Qian and Hesthaven, Jan S. and Ray, Deep},
    title    = {Non-intrusive reduced order modeling of unsteady flows using artificial neural networks with application to a combustion problem},
    year     = {2019},
    volume   = {384},
    pages    = {289--307},
    doi      = {10.1016/j.jcp.2019.01.031},
    journal = {Journal of Computational Physics},
    mrclass  = {80A25 (65M99)},
    mrnumber = {3920924},
}

@article{devore2013greedy,
    author={DeVore, Ronald and Petrova, Guergana and Wojtaszczyk, Przemyslaw},
    title={Greedy Algorithms for Reduced Bases in Banach Spaces},
    journal={Constructive Approximation},
    year={2013},
    volume={37},
    number={3},
    pages={455-466},
    doi={10.1007/s00365-013-9186-2},
}

@article{haasdonk2023certified,
    author = {Haasdonk, Bernard and Kleikamp, Hendrik and Ohlberger, Mario and Schindler, Felix and Wenzel, Tizian},
    title = {A New Certified Hierarchical and Adaptive RB-ML-ROM Surrogate Model for Parametrized PDEs},
    journal = {SIAM Journal on Scientific Computing},
    volume = {45},
    number = {3},
    pages = {A1039-A1065},
    year = {2023},
    doi = {10.1137/22M1493318},
    abstract = {Abstract. We present a new surrogate modeling technique for efficient approximation of input-output maps governed by parametrized PDEs. The model is hierarchical as it is built on a full order model, reduced order model (ROM), and machine learning (ML) model chain. The model is adaptive in the sense that the ROM and ML model are adapted on the fly during a sequence of parametric requests to the model. To allow for a certification of the model hierarchy, as well as to control the adaptation process, we employ rigorous a posteriori error estimates for the ROM and ML models. In particular, we provide an example of an ML-based model that allows for rigorous analytical quality statements. We demonstrate the efficiency of the modeling chain on a Monte Carlo and a parameter-optimization example. Here, the ROM is instantiated by Reduced Basis methods, and the ML model is given by a neural network or by a kernel model using vectorial kernel orthogonal greedy algorithms.}
}

@article{wenzel2023application,
      title={Application of Deep Kernel Models for Certified and Adaptive RB-ML-ROM Surrogate Modeling}, 
      author={Tizian Wenzel and Bernard Haasdonk and Hendrik Kleikamp and Mario Ohlberger and Felix Schindler},
      year={2023},
      eprint={2302.14526},
      archivePrefix={arXiv},
      primaryClass={math.NA}
}

@inproceedings{kmet2011neural,
    author={Kmet, Tibor},
    editor={Honkela, Timo
    and Duch, W{\l}odzis{\l}aw
    and Girolami, Mark
    and Kaski, Samuel},
    title={Neural Network Solution of Optimal Control Problem with Control and State Constraints},
    booktitle={Artificial Neural Networks and Machine Learning -- ICANN 2011},
    year={2011},
    publisher={Springer Berlin Heidelberg},
    address={Berlin, Heidelberg},
    pages={261--268},
    doi={10.1007/978-3-642-21738-8_34},
    abstract={A neural network based optimal control synthesis is presented for solving optimal control problems with control and state constraints. The optimal control problem is transcribed into nonlinear programming problem which is implemented with adaptive critic neural network. The proposed simulation methods is illustrated by the optimal control problem of feeding adaptation of filter feeders of Daphnia. Results show that adaptive critic based systematic approach holds promise for obtaining the optimal control with control and state constraints.},
    isbn={978-3-642-21738-8}
}

@article{dede2012reduced,
    author={Ded{\`e}, Luca},
    title={Reduced Basis Method and Error Estimation for Parametrized Optimal Control Problems with Control Constraints},
    journal={SIAM Journal of Scientific Computing},
    year={2012},
    volume={50},
    number={2},
    pages={287-305},
    abstract={We propose a Reduced Basis method for the solution of parametrized optimal control problems with control constraints for which we extend the method proposed in Ded{\`e}, L. (SIAM J. Sci. Comput. 32:997, 2010) for the unconstrained problem. The case of a linear-quadratic optimal control problem is considered with the primal equation represented by a linear parabolic partial differential equation. The standard offline--online decomposition of the Reduced Basis method is employed with the Finite Element approximation as the ``truth'' one for the offline step. An error estimate is derived and an heuristic indicator is proposed to evaluate the Reduced Basis error on the optimal control problem at the online step; also, the indicator is used at the offline step in a Greedy algorithm to build the Reduced Basis space. We solve numerical tests in the two-dimensional case with applications to heat conduction and environmental optimal control problems.},
    doi={10.1007/s10915-011-9483-5},
}

@article{ehring2023hermite,
      title={Hermite kernel surrogates for the value function of high-dimensio\-nal nonlinear optimal control problems}, 
      author={Tobias Ehring and Bernard Haasdonk},
      year={2023},
      eprint={2305.06122},
      archivePrefix={arXiv},
      primaryClass={math.OC}
}

@book{wendland2005scattered,
    title = {Scattered {D}ata {A}pproximation},
    publisher = {Cambridge University Press},
    year = {2005},
    author = {Wendland, Holger},
    volume = {17},
    series = {Cambridge Monographs on Applied and Computational Mathematics},
    address = {Cambridge},
    isbn = {978-0521-84335-5},
    doi = {10.1017/CBO9780511617539},
}

@book{hesthaven2016certified,
    title = {Certified Reduced Basis Methods for Pa\-ra\-me\-trized Partial Differential Equations},
    publisher = {Springer Cham},
    year = {2016},
    author = {Jan S. Hesthaven and Gianluigi Rozza and Benjamin Stamm},
    address={New York},
    series = {SpringerBriefs in Mathematics},
    isbn = {978-3-319-22470-1},
    doi = {10.1007/978-3-319-22470-1},
}

@incollection{santin2021kernel,
    author       = {Santin, Gabriele and Haasdonk, Bernard},
    title        = {Kernel Methods for Surrogate Modeling},
    booktitle    = {Model Order Reduction},
    year         = {2021},
    editor       = {Benner, Peter and Grivet-Talocia, Stefano and Quarteroni, Alfio and Rozza, Gianluigi and Schilders, Wil and Silveira, Luís Miguel},
    booksubtitle = {System- and Data-Driven Methods and Algorithms},
    volume       = {2},
    publisher    = {De Gruyter},
    doi          = {10.1515/9783110498967-009},
}

@article{wenzel2021novel,
    title = {A novel class of stabilized greedy kernel approximation algorithms: Convergence, stability and uniform point distribution},
    journal = {Journal of Approximation Theory},
    volume = {262},
    pages = {105508},
    year = {2021},
    doi = {10.1016/j.jat.2020.105508},
    author = {Tizian Wenzel and Gabriele Santin and Bernard Haasdonk},
    keywords = {Kernel methods, Greedy algorithms, Convergence rates},
    abstract = {Kernel based methods provide a way to reconstruct potentially high-dimensional functions from meshfree samples, i.e., sampling points and corresponding target values. A crucial ingredient for this to be successful is the distribution of the sampling points. Since the computation of an optimal selection of sampling points may be an infeasible task, one promising option is to use greedy methods. Although these methods may be very effective, depending on the specific greedy criterion the chosen points might quickly lead to instabilities in the computation. To circumvent this problem, we introduce and investigate a new class of stabilized greedy kernel algorithms, which can be used to create a scale of new selection strategies. We analyze these algorithms, and in particular we prove convergence results and quantify in a precise way the distribution of the selected points. These results allow to prove, in the case of certain Sobolev kernels, that the algorithms have optimal stability and optimal convergence rates, including for functions outside the native space of the kernel. The results also apply to the case of the usual P-greedy algorithm, significantly improving state-of-the-art results available in the literature. Illustrative experiments are presented that support the theoretical findings and show improvements of the stabilized algorithms in terms of accuracy due to improved stability.}
}

@article{santin2017convergence,
    title = {Convergence rate of the data-independent P-greedy algorithm in kernel-based approximation},
    volume = {10},
    DOI = {10.14658/pupj-drna-2017-Special_Issue-9},
    number = {6},
    journal = {Dolomites Research Notes on Approximation},
    publisher = {Padova University Press},
    author = {Santin, Gabriele and Haasdonk, Bernard},
    year = {2017},
    pages = {68–78}
}

@article{bohn2019representer,
    author = {Bohn, Bastian and Rieger, Christian and Griebel, Michael},
    title = {A Representer Theorem for Deep Kernel Learning},
    year = {2019},
    publisher = {JMLR.org},
    volume = {20},
    number = {1},
    abstract = {In this paper we provide a finite-sample and an infinite-sample representer theorem for the concatenation of (linear combinations of) kernel functions of reproducing kernel Hilbert spaces. These results serve as mathematical foundation for the analysis of machine learning algorithms based on compositions of functions. As a direct consequence in the finite-sample case, the corresponding infinite-dimensional minimization problems can be recast into (nonlinear) finite-dimensional minimization problems, which can be tackled with nonlinear optimization algorithms. Moreover, we show how concatenated machine learning problems can be reformulated as neural networks and how our representer theorem applies to a broad class of state-of-the-art deep learning methods.},
    journal = {Journal of Machine Learning Research},
    pages = {2302–2333},
    numpages = {32},
    keywords = {deep kernel learning, multilayer kernel, regularized least-squares regression, representer theorem, artificial neural networks},
    doi = {https://dl.acm.org/doi/10.5555/3322706.3362005},
}

@article{harris2020array,
    title = {Array programming with {NumPy}},
    author = {Charles R. Harris and K. Jarrod Millman and St{\'{e}}fan J.
                 van der Walt and Ralf Gommers and Pauli Virtanen and David
                 Cournapeau and Eric Wieser and Julian Taylor and Sebastian
                 Berg and Nathaniel J. Smith and Robert Kern and Matti Picus
                 and Stephan Hoyer and Marten H. van Kerkwijk and Matthew
                 Brett and Allan Haldane and Jaime Fern{\'{a}}ndez del
                 R{\'{i}}o and Mark Wiebe and Pearu Peterson and Pierre
                 G{\'{e}}rard-Marchant and Kevin Sheppard and Tyler Reddy and
                 Warren Weckesser and Hameer Abbasi and Christoph Gohlke and
                 Travis E. Oliphant},
    year = {2020},
    journal = {Nature},
    volume = {585},
    number = {7825},
    pages = {357--362},
    doi = {10.1038/s41586-020-2649-2},
    publisher = {Springer Science and Business Media {LLC}},
}

@article{virtanen2020SciPy,
    author = {Virtanen, Pauli and Gommers, Ralf and Oliphant, Travis E. and
            Haberland, Matt and Reddy, Tyler and Cournapeau, David and
            Burovski, Evgeni and Peterson, Pearu and Weckesser, Warren and
            Bright, Jonathan and {van der Walt}, St{\'e}fan J. and
            Brett, Matthew and Wilson, Joshua and Millman, K. Jarrod and
            Mayorov, Nikolay and Nelson, Andrew R. J. and Jones, Eric and
            Kern, Robert and Larson, Eric and Carey, C J and
            Polat, {\.I}lhan and Feng, Yu and Moore, Eric W. and
            {VanderPlas}, Jake and Laxalde, Denis and Perktold, Josef and
            Cimrman, Robert and Henriksen, Ian and Quintero, E. A. and
            Harris, Charles R. and Archibald, Anne M. and
            Ribeiro, Ant{\^o}nio H. and Pedregosa, Fabian and
            {van Mulbregt}, Paul and {SciPy 1.0 Contributors}},
    title = {{{SciPy} 1.0: Fundamental Algorithms for Scientific Computing in Python}},
    journal = {Nature Methods},
    year = {2020},
    volume = {17},
    pages = {261--272},
    doi = {10.1038/s41592-019-0686-2},
}

@article{milk2016pyMOR,
    author    = {Milk, Ren\'{e} and Rave, Stephan and Schindler, Felix},
    title     = {{pyMOR} -- Generic Algorithms and Interfaces for Model Order Reduction},
    journal   = {SIAM Journal of Scientific Computing},
    year      = {2016},
    volume    = {38},
    number    = {5},
    pages     = {S194--S216},
    doi       = {10.1137/15m1026614},
    groups    = {RB, software},
    keywords  = {model order reduction, reduced basis method, empirical interpolation, scientific computing, software, Python},
    publisher = {Society for Industrial {\&} Applied Mathematics ({SIAM})},
}

@incollection{paszke2019PyTorch,
    author    = {Paszke, Adam and Gross, Sam and Massa, Francisco and Lerer, Adam and Bradbury, James and Chanan, Gregory and Killeen, Trevor and Lin, Zeming and Gimelshein, Natalia and Antiga, Luca and Desmaison, Alban and Kopf, Andreas and Yang, Edward and DeVito, Zachary and Raison, Martin and Tejani, Alykhan and Chilamkurthy, Sasank and Steiner, Benoit and Fang, Lu and Bai, Junjie and Chintala, Soumith},
    title     = {PyTorch: An Imperative Style, High-Performance Deep Learning Library},
    booktitle = {Advances in Neural Information Processing Systems 32},
    publisher = {Curran Associates, Inc.},
    year      = {2019},
    editor    = {H. Wallach and H. Larochelle and A. Beygelzimer and F. d' Alch\'{e}-Buc and E. Fox and R. Garnett},
    pages     = {8024--8035},
    doi = {https://dl.acm.org/doi/10.5555/3454287.3455008},
}

@article{petersen2018optimal,
    author   = {Philipp Petersen and Felix Voigtlaender},
    title    = {Optimal approximation of piecewise smooth functions using deep {ReLU} neural networks},
    journal  = {Neural Networks},
    year     = {2018},
    volume   = {108},
    pages    = {296 - 330},
    abstract = {We study the necessary and sufficient complexity of ReLU neural networks – in terms of depth and number of weights – which is required for approximating classifier functions in an Lp-sense. As a model class, we consider the set Eβ(Rd) of possibly discontinuous piecewise Cβ functions f:[−12,12]d→R, where the different “smooth regions” of f are separated by Cβ hypersurfaces. For given dimension d≥2, regularity β>0, and accuracy ε>0, we construct artificial neural networks with ReLU activation function that approximate functions from Eβ(Rd) up to an L2 error of ε. The constructed networks have a fixed number of layers, depending only on d and β, and they have O(ε−2(d−1)∕β) many nonzero weights, which we prove to be optimal. For the proof of optimality, we establish a lower bound on the description complexity of the class Eβ(Rd). By showing that a family of approximating neural networks gives rise to an encoder for Eβ(Rd), we then prove that one cannot approximate a general function f∈Eβ(Rd) using neural networks that are less complex than those produced by our construction. In addition to the optimality in terms of the number of weights, we show that in order to achieve this optimal approximation rate, one needs ReLU networks of a certain minimal depth. Precisely, for piecewise Cβ(Rd) functions, this minimal depth is given – up to a multiplicative constant – by β∕d. Up to a log factor, our constructed networks match this bound. This partly explains the benefits of depth for ReLU networks by showing that deep networks are necessary to achieve efficient approximation of (piecewise) smooth functions. Finally, we analyze approximation in high-dimensional spaces where the function f to be approximated can be factorized into a smooth dimension reducing feature map τ and classifier function g – defined on a low-dimensional feature space – as f=g∘τ. We show that in this case the approximation rate depends only on the dimension of the feature space and not the input dimension.},
    doi      = {10.1016/j.neunet.2018.08.019},
    keywords = {Deep neural networks, Piecewise smooth functions, Function approximation, Sparse connectivity, Metric entropy, Curse of dimension},
}

@article{rumelhart1986learning,
    author    = {Rumelhart, David E. and Hinton, Geoffrey E. and Williams, Ronald J.},
    title     = {{Learning representations by back-propagating errors}},
    journal   = {Nature},
    year      = {1986},
    volume    = {323},
    number    = {6088},
    pages     = {533--536},
    doi       = {10.1038/323533a0},
    abstract  = {We describe a new learning procedure, back-propagation, for networks of neurone-like units. The procedure repeatedly adjusts the weights of the connections in the network so as to minimize a measure of the difference between the actual output vector of the net and the desired output vector. As a result of the weight adjustments, internal 'hidden' units which are not part of the input or output come to represent important features of the task domain, and the regularities in the task are captured by the interactions of these units. The ability to create useful new features distinguishes back-propagation from earlier, simpler methods such as the perceptron-convergence procedure.},
    keywords  = {algorithm learning modeling neuralnetworks},
}

@article{liu1989limited,
    author  = {Dong C. Liu and Jorge Nocedal},
    title   = {On the Limited Memory {BFGS} Method for Large Scale Optimization},
    journal = {Mathematical Programming},
    year    = {1989},
    volume  = {45},
    pages   = {503--528},
    doi     = {10.1007/bf01589116},
}

@inproceedings{prechelt1997early,
    author    = {Lutz Prechelt},
    title     = {Early Stopping - but when?},
    booktitle = {Neural Networks: Tricks of the Trade, volume 1524 of LNCS, chapter 2},
    year      = {1997},
    pages     = {55--69},
    publisher = {Springer-Verlag},
    doi       = {10.1007/978-3-642-35289-8_5},
}

@article{scikit-learn,
    title={Scikit-learn: Machine Learning in {P}ython},
    author={Pedregosa, Fabian and Varoquaux, Ga\"{e}l and Gramfort, Alexandre and Michel, Vincent and Thirion, Bertrand and Grisel, Olivier and Blondel, Mathieu and Prettenhofer, Peter and Weiss, Ron and Dubourg, Vincent and Vanderplas, Jake and Passos, Alexandre and Cournapeau, David and Brucher, Matthieu and Perrot, Matthieu and Duchesnay, \'{E}douard},
    journal={Journal of Machine Learning Research},
    volume={12},
    pages={2825--2830},
    year={2011},
    doi={https://dl.acm.org/doi/10.5555/1953048.2078195},
}

@article {guo2018reduced,
    AUTHOR = {Guo, Mengwu and Hesthaven, Jan S.},
    TITLE = {Reduced order modeling for nonlinear structural analysis using {G}aussian process regression},
    JOURNAL = {Computer Methods in Applied Mechanics and Engineering},
    VOLUME = {341},
    YEAR = {2018},
    PAGES = {807--826},
    MRCLASS = {65N30 (65N99)},
    MRNUMBER = {3845646},
    DOI = {10.1016/j.cma.2018.07.017},
}

@article{lecun2015deep,
    author={LeCun, Yann and Bengio, Yoshua and Hinton, Geoffrey},
    title={Deep learning},
    journal={Nature},
    year={2015},
    day={01},
    volume={521},
    number={7553},
    pages={436-444},
    abstract={Deep learning allows computational models that are composed of multiple processing layers to learn representations of data with multiple levels of abstraction. These methods have dramatically improved the state-of-the-art in speech recognition, visual object recognition, object detection and many other domains such as drug discovery and genomics. Deep learning discovers intricate structure in large data sets by using the backpropagation algorithm to indicate how a machine should change its internal parameters that are used to compute the representation in each layer from the representation in the previous layer. Deep convolutional nets have brought about breakthroughs in processing images, video, speech and audio, whereas recurrent nets have shone light on sequential data such as text and speech.},
    doi={10.1038/nature14539},
}

@article{crank1947practical,
    title={A practical method for numerical evaluation of solutions of partial differential equations of the heat-conduction type},
    volume={43},
    DOI={10.1017/S0305004100023197},
    number={1},
    journal={Mathematical Proceedings of the Cambridge Philosophical Society},
    publisher={Cambridge University Press},
    author={Crank, John and Nicolson, Phyllis},
    year={1947},
    pages={50–67}
}

@book{golub1996matrix,
    title = {Matrix Computations},
    author = {Golub, Gene H. and Van Loan, Charles F.},
    publisher = {The Johns Hopkins University Press},
    year = {1996},
    edition = {Third Edition},
    doi = {https://dl.acm.org/doi/10.5555/248979},
    isbn = {0801854148},
}

@inbook{graessle2021model,
    title = {Model order reduction by proper orthogonal decomposition},
    booktitle = {Volume 2 Snapshot-Based Methods and Algorithms},
    booktitle = {Volume 2: Snapshot-Based Methods and Algorithms},
    author = {Carmen Gräßle and Michael Hinze and Stefan Volkwein},
    editor = {Peter Benner and Stefano Grivet-Talocia and Alfio Quarteroni and Gianluigi Rozza and Wil Schilders and Luís Miguel Silveira},
    publisher = {De Gruyter},
    address = {Berlin, Boston},
    pages = {47--96},
    doi = {10.1515/9783110671490-002},
    isbn = {9783110671490},
    year = {2021},
}

@book{rasmussen2006gaussian,
    author = {Rasmussen, Carl Edward and Williams, Christopher K. I.},
    isbn = {026218253X},
    keywords = {dblp},
    pages = {I-XVIII, 1-248},
    publisher = {MIT Press},
    series = {Adaptive computation and machine learning},
    title = {Gaussian processes for machine learning},
    year = {2006},
    doi = {10.7551/mitpress/3206.001.0001},
}

@inproceedings{mayer2019stochastic,
    author={Mayer, Jana and Dolgov, Maxim and Stickling, Tobias and Özgen, Selim and Rosenthal, Florian and Hanebeck, Uwe D.},
    booktitle={2019 American Control Conference (ACC)},
    title={Stochastic Optimal Control Using Gaussian Process Regression over Probability Distributions},
    year={2019},
    volume={},
    number={},
    pages={4847-4853},
    doi={10.23919/ACC.2019.8814658}
}

@article{benner2015survey,
    doi = {10.1137/130932715},
    year = {2015},
    publisher = {Society for Industrial {\&} Applied Mathematics ({SIAM})},
    volume = {57},
    number = {4},
    pages = {483--531},
    author = {Peter Benner and Serkan Gugercin and Karen Willcox},
    title = {A Survey of Projection-Based Model Reduction Methods for Parametric Dynamical Systems},
    journal = {{SIAM} Review}
}

@article{kanagawa2018gaussian,
    title={Gaussian Processes and Kernel Methods: A Review on Connections and Equivalences}, 
    author={Motonobu Kanagawa and Philipp Hennig and Dino Sejdinovic and Bharath K Sriperumbudur},
    year={2018},
    eprint={1807.02582},
    archivePrefix={arXiv},
    primaryClass={stat.ML}
}

@article{haasdonk2011efficient,
    doi = {10.1080/13873954.2010.514703},
    year = {2011},
    publisher = {Informa {UK} Limited},
    volume = {17},
    number = {2},
    pages = {145--161},
    author = {Bernard Haasdonk and Mario Ohlberger},
    title = {Efficient reduced models and a posteriori error estimation for parametrized dynamical systems by offline/online decomposition},
    journal = {Mathematical and Computer Modelling of Dynamical Systems}
}

@article{aronszajn1950theory,
    doi = {10.1090/s0002-9947-1950-0051437-7},
    year = {1950},
    publisher = {American Mathematical Society ({AMS})},
    volume = {68},
    number = {3},
    pages = {337--404},
    author = {Nachman Aronszajn},
    title = {Theory of reproducing kernels},
    journal = {Transactions of the American Mathematical Society}
}

@book{hale1980ordinary,
    year = {1980},
    author = {Jack K. Hale},
    title = {Ordinary Differential Equations},
    edition = {2},
    publisher = {Robert E. Krieger Publishing Company},
    isbn = {0-89874-011-8}
}

@article{greif2019decay,
	title = {Decay of the {Kolmogorov} {N}-width for wave problems},
	journal = {Applied Mathematics Letters},
	volume = {96},
	pages = {216-222},
	year = {2019},
	issn = {0893-9659},
	doi = {10.1016/j.aml.2019.05.013},
	author = {Constantin Greif and Karsten Urban},
	keywords = {Kolmogorov-width, Wave equation},
	abstract = {The Kolmogorov N-width dN(M) describes the rate of the worst-case error (w.r.t. a subset M⊂H of a normed space H) arising from a projection onto the best-possible linear subspace of H of dimension N∈N. Thus, dN(M) sets a limit to any projection-based approximation such as determined by the reduced basis method. While it is known that dN(M) decays exponentially fast for many linear coercive parameterized partial differential equations, i.e., dN(M)=O(e−βN), we show in this note, that only dN(M)=O(N−1∕2) for initial–boundary-value problems of the hyperbolic wave equation with discontinuous initial conditions. This is aligned with the known slow decay of dN(M) for the linear transport problem.}
}

@misc{sourcecode,
    doi = {10.5281/ZENODO.8188417},
    url = {https://zenodo.org/record/8188417},
    author = {Kleikamp, Hendrik and Lazar, Martin and Molinari, Cesare},
    title = {Software for ``Be greedy and learn: efficient and certified algorithms for parametrized optimal control problems''},
    publisher = {Zenodo},
    year = {2023},
    copyright = {Open Access}
}

@Article{keil2022adaptive,
    author={Keil, Tim and Kleikamp, Hendrik and Lorentzen, Rolf J. and Oguntola, Micheal B. and Ohlberger, Mario},
    title={Adaptive machine learning-based surrogate modeling to accelerate PDE-constrained optimization in enhanced oil recovery},
    journal={Advances in Computational Mathematics},
    year={2022},
    volume={48},
    number={6},
    pages={73},
    abstract={In this contribution, we develop an efficient surrogate modeling framework for simulation-based optimization of enhanced oil recovery, where we particularly focus on polymer flooding. The computational approach is based on an adaptive training procedure of a neural network that directly approximates an input-output map of the underlying PDE-constrained optimization problem. The training process thereby focuses on the construction of an accurate surrogate model solely related to the optimization path of an outer iterative optimization loop. True evaluations of the objective function are used to finally obtain certified results. Numerical experiments are given to evaluate the accuracy and efficiency of the approach for a heterogeneous five-spot benchmark problem.},
    doi={10.1007/s10444-022-09981-z},
}
    
	\newpage

    \section*{Statements and Declarations}

    \paragraph{Acknowledgments}
    \begin{itemize}
        \item H.~Kleikamp acknowledges funding by the Deutsche Forschungsgemeinschaft (DFG, German Research Foundation) under Germany's Excellence Strategy EXC 2044 –390685587, Mathematics Münster: Dynamics–Geometry–Structure.
        \item M.~Lazar was supported by the Alexander von Humboldt-Professorship program and through the project ``Uncertain Data in Control Systems with PDEs'' funded by German Academic exchange Service DAAD and Croatian Ministry of Science and Education.
        \item C.~Molinari is part of the Indam group ``Gruppo Nazionale per l'Analisi Matematica, la Probabilit\`a e le loro applicazioni'' and acknowledges the support of the AFOSR project FA8655-22-1-703.
    \end{itemize}

    \paragraph{Competing Interests}
    The authors have no relevant financial or non-financial interests to disclose.

    \paragraph{Code Availability} The source code used to carry out the numerical experiments presented in this contribution can be found in~\cite{sourcecode}.

    \appendix

\section{Proof of~\texorpdfstring{\Cref{thm:optimality-system}}{Theorem~\ref{thm:optimality-system}}}\label{app:proof-optimality-system}
For simplicity, we omit the dependence of the involved quantities on the parameter~$\mu\in\params$.
\begin{proof}
    Let~$u^*\in G$ denote an optimal control, $x^*\in H$ the corresponding state trajectory. We are going to prove that the first variation of~$\mathcal{J}$ vanishes if~$x^*$, $\varphi^*$ and~$u^*$ solve the boundary value problem stated above. To this end, let~$v\in G$, and consider the perturbation~$u\in G$ of~$u^*$ defined as
    \[
        u(t) \coloneqq u^*(t) + \varepsilon v(t)
    \]
    for~$\varepsilon\in\setR$. Hence, the state equation for the control~$u$ reads
    \[
        \dot{x}(t) = Ax(t)+Bu^*(t)+\varepsilon Bv(t)\qquad\text{for }t\in[0,T].
    \]
    The solution is explicitly given by
    \begin{align*}
        x(t) &= e^{At}x^0 + \int\limits_0^t e^{A(t-s)}\big(Bu^*(s)+\varepsilon Bv(s)\big)\d{s} \\
        &= x^*(t) + \varepsilon\int\limits_0^t e^{A(t-s)}Bv(s)\d{s} \\
        &= x^*(t) + \varepsilon z(t)
    \end{align*}
    for~$z\in H$ defined as~$z(t)\coloneqq\int_0^t e^{A(t-s)}Bv(s)\d{s}$. It holds that~$z$ satisfies the ordinary differential equation
    \[
        \dot{z}(t) = Az(t)+Bv(t),\qquad z(0)=0.
    \]
    Introducing the adjoint state~$\varphi\in H$ and the Hamiltonian function~$\mathcal{H}\colon\X\times\U\times\X\to\setR$ given as
    \[
        \mathcal{H}(x(t),u(t),\varphi(t)) = \frac{1}{2} \langle u(t), Ru(t) \rangle + \langle \varphi(t), \big(Ax(t)+Bu(t)\big)\rangle ,
    \]
    we can rewrite the functional~$\mathcal{J}$ as
    \[
        \mathcal{J}(u) = \frac{1}{2}\langle x(T)-x^T, M\left(x(T)-x^T\right)\rangle + \int\limits_0^T \mathcal{H}(x(t),u(t),\varphi(t)) - \langle \varphi(t), \dot{x}(t)\rangle \d{t},
    \]
    since~$x\in H$ solves the state equation. This holds for any adjoint state~$\varphi\in H$. Similarly, for the optimal control~$u^*$ and corresponding state trajectory~$x^*$ it holds
    \[
        \mathcal{J}(u^*) = \frac{1}{2}\langle x^*(T)-x^T ,M\left(x^*(T)-x^T\right)\rangle + \int\limits_0^T \mathcal{H}(x^*(t),u^*(t),\varphi(t)) - \langle \varphi(t), \dot{x}^*(t)\rangle \d{t}.
    \]
    We now consider the difference~$\mathcal{J}(u)-\mathcal{J}(u^*)$, which is given as
    \begin{equation}\label{equ:difference-cost-functional}
        \begin{aligned}
            \mathcal{J}(u)-\mathcal{J}(u^*) &= \frac{1}{2}\left[\langle x(T)-x^T, M\left(x(T)-x^T\right)\rangle - \langle x^*(T)-x^T, M\left(x^*(T)-x^T\right)\rangle \right] \\
            & \hphantom{==}+ \int\limits_0^T \mathcal{H}(x(t),u(t),\varphi(t))-\mathcal{H}(x^*(t),u^*(t),\varphi(t))\d{t} \\
            & \hphantom{==}+ \int\limits_0^T \langle \varphi(t), \dot{x}^*(t)-\dot{x}(t)\rangle \d{t}.
        \end{aligned}
    \end{equation}
    We obtain for the first term in~\cref{equ:difference-cost-functional} the identity
    \begin{align*}
        & \frac{1}{2}\left[\langle x(T)-x^T, M\big(x(T)-x^T\big) \rangle-\langle x^*(T)-x^T, M\big(x^*(T)-x^T\big)\rangle \right] \\
        &= \frac{1}{2}\left[\langle x^*(T)+\varepsilon z(T)-x^T, M\big(x^*(T)+\varepsilon z(T)-x^T\big) \rangle-\langle x^*(T)-x^T, M\big(x^*(T)-x^T\big)\rangle \right] \\
        &= \varepsilon \langle z(T), M\big(x^*(T)-x^T\big)\rangle +\mathcal{O}(\varepsilon^2),
    \end{align*}
    where we used that~$M$ is self-adjoint. For the difference of the Hamiltonians in the second term in~\cref{equ:difference-cost-functional} it holds
    \begin{align*}
        & \mathcal{H}(x(t),u(t),\varphi(t))-\mathcal{H}(x^*(t),u^*(t),\varphi(t)) \\
        &= \frac{1}{2} \langle u(t), Ru(t)\rangle +\langle \varphi(t),Ax(t)+Bu(t)\rangle - \frac{1}{2} \langle u^*(t), Ru^*(t)\rangle - \langle \varphi(t), Ax^*(t)+Bu^*(t)\rangle \\
        &= \frac{1}{2}\langle u^*(t) + \varepsilon v(t), R\big(u^*(t) + \varepsilon v(t)\big)\rangle +\langle \varphi(t), Ax^*(t)+\varepsilon Az(t)+Bu^*(t)+\varepsilon Bv(t)\rangle\\
        & \hphantom{==}- \frac{1}{2}\langle u^*(t), Ru^*(t)\rangle - \langle\varphi(t), Ax^*(t)+Bu^*(t)\rangle \\
        &= \varepsilon \langle u^*(t), Rv(t)\rangle+\langle \varphi(t), \varepsilon Az(t)+\varepsilon Bv(t)\rangle + \mathcal{O}(\varepsilon^2) \\
        &= \varepsilon\Big[\langle u^*(t), Rv(t)\rangle+\langle \varphi(t), Az(t)\rangle +\langle \varphi(t), Bv(t)\rangle \Big] + \mathcal{O}(\varepsilon^2) \\
        &= \varepsilon\Big[\langle Ru^*(t)+B^*\varphi(t), v(t)\rangle  + \langle \varphi(t), Az(t)\rangle\Big] + \mathcal{O}(\varepsilon^2)
    \end{align*}
    for all~$t\in[0,T]$, where we used that~$R$ is a self-adjoint operator. Further, recall that it holds~$x(t)=x^*(t)+\varepsilon z(t)$ and therefore~$\dot{x}^*(t)-\dot{x}(t)=-\varepsilon\dot{z}(t)$ for all~$t\in[0,T]$. For the last term in~\cref{equ:difference-cost-functional} we thus obtain via integration by parts
    \begin{align*}
        \int\limits_0^T \langle \varphi(t), \dot{x}^*(t)-\dot{x}(t)\rangle \d{t} &= -\varepsilon\int\limits_0^T \langle \varphi(t), \dot{z}(t)\rangle\,\d{t} \\
        &= \big[-\varepsilon\langle\varphi(t), z(t)\rangle\big]_0^T + \varepsilon\int\limits_0^T \langle \dot{\varphi}(t), z(t) \langle \d{t} \\
        &= -\varepsilon\langle \varphi(T), z(T)\rangle + \varepsilon\int\limits_0^T \langle \dot{\varphi}(t), z(t)\rangle \d{t},
    \end{align*}
    where we used that it holds~$z(0)=0$. Bringing everything together gives
    \begin{align*}
        \mathcal{J}(u)-\mathcal{J}(u^*) &= \varepsilon\left[\langle z(T), M\big(x^*(T)-x^T\big)\rangle \vphantom{\int\limits_0^T} + \int\limits_0^T \langle Ru^*(t)+B^*\varphi(t), v(t) \rangle + \langle\varphi(t), Az(t)\rangle \d{t} \right. \\
        & \hphantom{=\varepsilon=}\left. + \int\limits_0^T \langle \dot{\varphi}(t), z(t) \rangle \d{t} - \langle \varphi(T), z(T)\rangle \right] + \mathcal{O}(\varepsilon^2) \\
        &= \varepsilon\left[ \langle z(T), M(x^*(T)-x^T-\varphi(T)\rangle \vphantom{\int\limits_0^T} + \int\limits_0^T \langle Ru^*(t)+B^*\varphi(t), v(t) \rangle \d{t} \right. \\
        & \hphantom{=\varepsilon=}\left. + \int\limits_0^T \langle \dot{\varphi}(t)+A^*\varphi(t), z(t)\rangle \d{t}\right] + \mathcal{O}(\varepsilon^2).
    \end{align*}
    Since~$u^*$ is assumed to be an optimal control, it has to hold for all~$v\in \U$ that
    \begin{align*}
        0 &= \lim\limits_{\varepsilon\to 0}\frac{\mathcal{J}(u^*+\varepsilon v)-\mathcal{J}(u^*)}{\varepsilon} \\
        &= \lim\limits_{\varepsilon\to 0}\frac{\mathcal{J}(u)-\mathcal{J}(u^*)}{\varepsilon} \\
        &= \langle z(T), M(x^*(T)-x^T)-\varphi(T)\rangle \vphantom{\int\limits_0^T} \\
        & \hphantom{==}+ \int\limits_0^T \langle Ru^*(t)+B^*\varphi(t), v(t) \rangle \d{t} + \int\limits_0^T \langle \dot{\varphi}(t)+A^*\varphi(t), z(t)\rangle \d{t}.
    \end{align*}
    This yields the claimed necessary conditions for~$u^*$, $x^*$ and~$\varphi^*$.
\end{proof}
\end{document}